\numberwithin{equation}{subsection}
\newtheorem{theo}{Theorem}[subsection]
\newtheorem{lem}{Lemma}[subsection]
\newtheorem{rem}{Remark}[subsection]
\newtheorem{prop}{Proposition}[subsection]
\newtheorem{example}{Example}[subsection]
\newcommand{\thismonth}{\ifcase\month\or
  January\or February\or March\or April\or May\or June\or
  July\or August\or September\or October\or November\or December\fi
  \space\number\year}
\newcommand{\rssymb}[2]{\newcommand{#1}{{\mathrmsl{#2}}}}
\newcommand{\calsymb}[2]{\newcommand{#1}{{\mathcal{#2}}}}
\newcommand{\bbsymb}[2]{\newcommand{#1}{{\mathbb{#2}}}}
\newcommand{\lieoper}[2]{\newcommand{#1}{\mathop
  {\mathfrak{#2}\null}\nolimits}}
\newcommand{\oper}[3][n]{\newcommand{#2}{\mathop
  {\mathrm{#3}\null}\ifx n#1\nolimits\else\limits\fi}}
\newcommand{\rsoper}[3][n]{\newcommand{#2}{\mathop
  {\mathrmsl{#3}\null}\ifx n#1\nolimits\else\limits\fi}}
\bbsymb\C{C} \bbsymb\F{F} \bbsymb\HQ{H}\bbsymb\N{N} \bbsymb\Q{Q}
\bbsymb\R{R} \bbsymb\U{U} \bbsymb\V{V} \bbsymb\W{W} \bbsymb\Z{Z}
\bbsymb\bbf{F} \bbsymb\bbk{K} \bbsymb\bbi{I} \bbsymb\bbl{L}
\bbsymb\bbo{O} \bbsymb\bbj{J} \bbsymb\bby{Y} \bbsymb\bbp{P}
\bbsymb\bba{A}
\calsymb\cA{A} \calsymb\cB{B} \calsymb\cC{C} 
\calsymb\cM{M} \calsymb\cN{N} \calsymb\cO{O} \calsymb\cP{P}
\calsymb\cU{U} \calsymb\cV{V} \calsymb\cW{W} \calsymb\cX{X}
\calsymb\cY{Y} \calsymb\cZ{Z}
\renewcommand{\geq}{\geqslant} \renewcommand{\leq}{\leqslant}
\oper\End{End} \oper\Hom{Hom}                    
\oper\Sym{Sym} \oper\Skew{Skew}
\oper\Aut{Aut}                                   
\oper\GL{GL} \oper\SL{SL}\oper\Symp{Sp} \oper\CO{CO} \oper\On{O}
\oper\SO{SO} \oper\Pin{Pin} \oper\Spin{Spin} \oper\CU{CU}
\oper\Un{U} \oper\SU{SU} \oper\PSU{PSU} \rsoper\Diff{Diff}
\rsoper\SDiff{SDiff}
\lieoper\der{der}                                
\lieoper\gl{gl} \lieoper\sgl{sl}\lieoper\symp{sp} \lieoper\co{co}
\lieoper\so{so} \lieoper\spin{spin} \lieoper\cu{cu} \lieoper\un{u}
\lieoper\su{su} \rsoper\Vect{Vect} \rsoper\Ham{Ham}
\def\la#1{\hbox to #1pc{\leftarrowfill}}
\def\ra#1{\hbox to #1pc{\rightarrowfill}}
\newcommand{\Norm}[2][]{\bigl|\mkern-3mu\bigr|#2\bigr|\mkern-3mu\bigr|
  _{\lower1pt\hbox{${}_{#1}$}}}
\rsoper\dimn{dim}                           
\rsoper\grad{grad}                          
\rsoper\kernel{ker}\rsoper\image{im}        
\rsoper\alt{alt}   \rsoper\sym{sym}         
\rsoper\Ad{Ad}     \rsoper\ad{ad}           
\rsoper\CoAd{CoAd} \rsoper\coad{coad}       
\rsoper\trace{tr}  \rsoper\trfree{tf}       
\rsoper\detm{det}                           
\rsoper\Vol{Vol}                            
\rsoper\divg{div}                           
\rsoper\sign{sign}                          
\rssymb\iden{id}                            
\rssymb\vol{vol}                            
\oper\Imag{Im}\oper\Real{Re}                
\newcommand{\sd}{{\raise1pt\hbox{$\scriptscriptstyle +$}}}
\newcommand{\asd}{{\raise1pt\hbox{$\scriptscriptstyle -$}}}
\newcommand{\sdasd}{{\raise1pt\hbox{$\scriptscriptstyle\pm$}}}
\newcommand{\asdsd}{{\raise1pt\hbox{$\scriptscriptstyle\mp$}}}
\rsoper\scal{scal}
\def\kahl/{k\"ahler}
\def\Kahl/{K{\"a}hler}
\begin{document}

\title[The moduli of Sasaki-Einstein metrics and invertible polynomials]
{The local moduli of Sasaki-Einstein rational homology 7-spheres and invertible polynomials}

\author[J. Cuadros]{Jaime Cuadros Valle$^1$}
\author[J. Lope]{Joe Lope Vicente$^1$}

\address{$^1$Departamento de Ciencias, Secci\'on Matem\'aticas,
Pontificia Universidad Cat\'olica del Per\'u,
Av. Universitaria 1801, Lima 32, Per\'u}
\email{jcuadros@pucp.edu.pe}
\email{j.lope@pucp.edu.pe}

\date{\thismonth}

\begin{abstract} We study the local moduli space of Sasaki-Einstein metrics  on links of invertible polynomials defining rational homology 7-spheres.  All these polynomials are either of cycle type or are given as Thom Sebastiani sums of a cycle block and another atomic block. We found that for polynomials of cycle type, the  local moduli spaces of Sasaki-Einstein metrics  are zero dimensional. For the Thom-Sebastiani sums of an atomic block and a cycle polynomial, the dimensions of the local moduli  spaces of Sasaki-Einstein metrics are positive in general. Since all the links under study in this article remain Sasaki-Einstein rational homology 7-spheres under the  Berglund-H\"ubsch rule from classical mirror symmetry \cite{BH, CGL}, we are able to  find solutions for the problem associated to the moduli for the  Berglund-H\"ubsch transpose duals of this type of links. For the purpose 
of doing this, we give  specific description of the moduli spaces of complex structures on the weighted quasismooth hypersurfaces cut out by the corresponding invertible polynomials and, in particular, from this description, we can produce families of quasismooth weighted hypersurfaces  that denegerate to non-quasismooth with at worst klt singularties.



 \end{abstract}

\maketitle

\noindent{\bf Keywords:} Moduli, Sasaki-Einstein metrics, Rational homology spheres, Berglund-H\"ubsch, Calabi-Yau cones
\medskip

\noindent{\bf Mathematics Subject Classification}  53C25; 32G07; 32G13.
\medskip
\tableofcontents

\maketitle
\vspace{-2mm} 


\section{Introduction}


For Sasakian manifolds, which are roughly speaking the odd dimensional analogue of K\"ahler manifolds, the moduli problem has been addressed and an appropriate notion of moduli space has been  designed, however little is known about this space, see  \cite{Bo} and references therein.   In particular,  finding the number of components of this moduli has been studied  in order  to obtain lower bounds for the dimension of the moduli space of Sasakian structures for links of Brieskorn-Pham polynomials and Smale manifolds  \cite{BMvK, Ko, BGK}. For Sasaki-Einstein structures, the local study on its moduli has given interesting results via deformations of the transverse complex structure in a more general setting, see \cite{No, vC}. 

In this article, we describe the local moduli of Sasaki-Einstein metrics from the study of invertible polynomials, that is  polynomials of the form $f=\sum_{i=1}^n \prod_{j=1}^n x_j^{a_{i j}},$
 where $A=\left(a_{i j}\right)_{i, j=1}^n$ is a non-negative integer-valued matrix which is invertible over $\mathbb{Q}$ and where 
 $f$ is quasihomogeneous, i.e., there exist positive integers $w_j$ such that $d:=\sum_{j=1}^n w_j a_{i j}$ is constant for all $i$, and
 $f$ is quasismooth, i.e., $f: \mathbb{C}\left[x_1, \ldots, x_n\right] \rightarrow \mathbb{C}$ has exactly one critical point at the origin. Due to the Kreuzer-Skarke classification of invertible polynomials \cite{KS}  we know that any invertible polynomial, up to permutation of variables, can be written as a Thom-Sebastiani sum of three types of polynomials usually called atoms:
\begin{enumerate}
\item Fermat type: $w=x^a$,
\item Chain type: $w=x_1^{a_1} x_2+x_2^{a_2} x_3+\ldots+x_{n-1}^{a_{n-1}} x_n+x_n^{a_n}$, and
\item Loop or cycle type: $w=x_1^{a_1} x_2+x_2^{a_2} x_3+\ldots+x_{n-1}^{a_{n-1}} x_n+x_n^{a_n} x_1$.
\end{enumerate}
These atoms have remarkable properties, for instance in \cite{Ko1} cyclic polynomials have been studied in order to obtain information on rational surfaces with quotient singularities. Invertible polynomials, thought as  defining  subsets  of weighted projective spaces,  produce smooth links with some interesting features both at the topological/differential and the Riemannian level.  As a matter of fact, in \cite{BG1} Boyer and  Galicki  generalized  a method introduced by Kobayashi in \cite{Kob} to produce Sasaki-Einstein metrics on links of hypersurface singularities.  From there,  many examples of highly connected  Sasaki-Einstein manifolds were found. For such examples  the existence of orbifold Kähler-Einstein metrics on various Fano orbifolds is verified using different incarnations  of the $\alpha$-invariant method developed by Tian, Nadel, Demailly,  Koll\'ar and others \cite{Ti,Na,DK, CS}.  In \cite{Ko, BGN1, BGK, BN} special  interest was given to 5-dimensional manifolds and homotopy spheres realized as links of Brieskorn-Pham polynomials which are an important block in the classification of invertible polynomials.  In these articles the local moduli space of Sasaki-Einstein structures  for this kind of links were also studied. For instance, in \cite{BGK} it was found that for homotopy spheres given as links, the dimension of their moduli grew double exponentially with the dimension and  they also showed that all the 28 oriented diffeomorphism classes on $S^7$ admitted  inequivalent families of Sasakian-Einstein structures. In \cite{BG2}, again using links of Brieskorn-Pham polynomials viewed as branched covers over Calabi-Yau hypersurfaces, it is shown that
there exist continuous parameter families of Sasakian-Einstein metrics on infinitely many simply connected rational homology spheres in every odd dimension greater than 3. In particular, for dimension 7 they found 38 effective real parameters. Later, in \cite{BvC} it is  shown that the moduli space of positive Sasaki classes with vanishing first Chern class for  manifolds of the form  $S^{2 n} \times S^{2 n+1}$  or for odd dimensional homotopy spheres among others, has a countably infinite number of components of dimension greater than one and these contain no extremal Sasaki metrics at all. Again this information is  produced through links of  Brieskorn-Pham polynomials. 

In \cite{BGN2}, based on the complete list of Johnson and Koll\'ar of anticanonically embedded Fano 3-folds  \cite{JK}, many rational homology 7-spheres admitting Sasaki-Einstein metrics were constructed using the Kobayashi-Boyer-Galicki method. The ideas in \cite{BGN2} were extended in \cite{CL} and \cite{CGL} and moreover it was shown that  all the invertible polynomials producing rational homology 7-spheres admitting Sasaki-Einstein metrics were polynomials of cycle type, or iterated Thom-Sebastiani 
sums of a cycle type involving three variables and another atomic type involving two variables and they cannot be written as Brieskorn-Pham polynomials. 
In this article we investigate the space of deformations of Sasaki-Einstein metrics for this sort of links. We found that only the latter case admits non-equivalent Sasaki-Einstein metrics.  Actually we show that  
\begin{itemize}
\item For polynomials of cycle type producing rational homology 7-spheres, the corresponding local moduli space of Sasaki-Einstein metrics  has  dimension zero, that is,  links of this sort  do not admit inequivalent families of Sasaki-Einstein structures, see Subsection 3.1. 
\item  For Thom-Sebastiani sums of a cycle type  and another atomic type, the dimension of  local moduli  space of Sasaki-Einstein metrics  are given in terms of  the rational weights, introduced by Milnor and Orlik  in \cite{MO}, for the corresponding weighted hypersurface and, in general, this dimension is positive, see Subsection 3.2. 
\end{itemize}
Since all the links under study in this article remain Sasaki-Einstein rational homology 7-spheres under the  Berglund-H\"ubsch rule from classical mirror symmetry \cite{CGL}, we found that 
rational homology 7-spheres given as links coming from polynomials  of this type do not admit inequivalent families of Sasaki-Einstein structures with the exception of five elements, see Subsection 3.3. 
In particular our results give information on the connected components of the moduli of rational homology 7-spheres with specific differential structure which can be expressed as  link of isolated hypersurface singularities coming from the Johnson and Koll\'ar list  of Fano 3-folds of index 1. Our findings can be interpreted in two different settings:

\begin{itemize}
\item Seifert $S^1$-bundles are rational homology spheres if and only if the corresponding orbifolds  are rational homology complex projective spaces \cite{Ko2}, so our results describe  some components of  the moduli space of rational homology complex projective 3-spaces with quotient singularities.
\item  Sasaki-Einstein structures on the manifold determine  Ricci-flat K\"ahler cone metrics on the corresponding affine cone \cite{BBG},  so our results give information on the moduli of Calabi-Yau  cones.
\end{itemize}
\smallskip

In order to obtain these results, we find the generators of the infinitesimal deformations of the  complex structures for orbifolds arising from families of polynomials of the types mentioned above, and then   determine the moduli problem for K\"ahler-Einstein metrics on the corresponding hypersurfaces embedded in weighted projective spaces. In fact,  arithmetic conditions on the invertible polynomials are imposed so the corresponding  links are rational homology  7-spheres  admitting  Sasaki-Einstein metrics. Then we  we use these arithmetic constraints  to obtain all possible monomials 
$$z_0^{x_0} z_1^{x_1} z_2^{x_2} z_3^{x_3} z_4^{x_4},$$ of certain degree $d.$   

Due to our interest in smooth Sasaki-Einstein links, our results are based on the information obtained on the subset all quasismooth elements determined by the monomials in $H^{0}(\mathbb P(\mathbf{w}), \mathcal{O}(d)).$ Nonetheless, the explicit description of $H^{0}(\mathbb P(\mathbf{w}),\mathcal{O}(d))$ leads to the study  of the non-quasismooth polynomial generated by these  monomials where, as suggested in \cite{LX} (see also \cite{Od1}), the ones with at worst klt singularities are the candidates to  give rise to  non-smooth links whose metric cones can be considered as some sort of degenerating Calabi-Yau cones. Actually, non-quasismooth klt hypersurfaces  with $\alpha$-invariant greater than one describe points in the  boundary of a compactification of the moduli of quasismooth K\"ahler-Einstein weighted hypersurface and on the $K$-moduli of Fano cones as described in  \cite{Od} for instance. We explore these ideas in the last section of this article and  explain how to  produce families of quasismooth weighted hipersurfaces  that denegerate to non-quasismooth klt varieties through examples.   
\medskip

The paper is organized as follows. Section 2 reviews the background material relevant  for this paper. In Section 3 we relate the problem of finding the moduli number of Sasaki-Einstein 
rational homology 7-spheres to solving the Diophantine equations subject to certain arithmetic conditions extracted from  the building blocks of the invertible polynomial associated to this problem. Finally, we find the dimension of the corresponding  local moduli space of Sasaki-Einstein  metrics and prove our main results. In the last section, we discuss the role of non-quasismooth hypersurfaces in this setting.

\section{Preliminaries}
\subsection{Sasakian structures on smooth links and invertible polynomials} Below we give a very brief review of the main ingredients we need to establish our results.  
The canonical references here are  \cite{BBG} and \cite{Sp}. After that,  we give arithmetic conditions that allow us to  manufacture  rational homology 7-spheres from links of invertible polynomials. Finally,  we  explain how the Berglund-H\"ubsch transpose rule \cite{BH} can be used in this framework.
\medskip

\noindent{\bf Sasakian Geometry:} Sasakian geometry is a special type of contact metric structure on a $(2n+1)$-dimensional manifold $M$ described by the tensors $(\Phi, \xi, \eta, g)$  such that $\eta$ is a contact 1-form, $\Phi$ is an endomorphism of the tangent bundle, 
$g=d\eta \circ (\mathbb{I}\times \Phi)+\eta\otimes \eta$ is a Riemannian metric and $\xi$ is the Reeb vector field which is Killing. Moreover, the underlying CR-structure $\left(\mathcal{D},\left.\Phi\right|_{\mathcal{D}}\right)$ is integrable, where $\mathcal{D}=\ker \eta$ denotes the contact structure. 

A Sasaki manifold has a transverse  Kählerian structure  $(\nu(\mathcal{F}_\xi), \bar{J})$ 
determined by  the normal bundle $\nu(\mathcal{F}_\xi)$ of the characteristic foliation $\mathcal{F}_\xi$ determined by $\xi$ and the natural complex structure $\bar{J}$ in $\nu(\mathcal{F}_\xi)$ induced by $\left.\Phi\right|_{\mathcal{D}}.$ In fact,  when all the orbits of $\xi$ are closed, the Reeb vector field $\xi$ generates a locally free circle action whose quotient is a Kähler  orbifold. In this case the Sasakian structure is called quasiregular. When the action is free it is called  regular and the quotient is a Kähler manifold. In the irregular case when the orbits of the Reeb vector field  are not closed the local quotients are Kähler. 

Alternatively, one can understand a Sasakian structure on the manifold $M$ in terms of the metric cone $C(M)=M \times \mathbb{R}^{+}$ with symplectic form $\omega=$ $d\left(r^2 \eta\right)$ where $r$ is the radial coordinate. Indeed, using the Liouville vector field $\Psi=r \partial_r$ we define a natural complex structure $I$ on $C(M)$ by
$$
I X=\Phi X+\eta(X) \Psi, \quad I \Psi=-\xi, 
$$
where $X$ is a vector field on $M$, and $\xi$ is understood to be lifted to $C(M)$. By adding the apex of the cone we obtain an affine variety $C(M) \cup\{0\}$ which has been intensely studied to obtain K-stability theorems in the manner of the work  of Chen-Donaldson-Sun in the context of Sasakian manifolds \cite{CSz}. We know that $(M, g)$ is Sasakian if the cone $(C(M),\omega, I, \bar{g} )$ is K\"ahler with K\"ahler metric given by $\bar{g}=d r^2+r^2 g.$ 
Moreover, the Sasakian structure corresponds to a polarized affine variety $(C(M) \cup\{0\}, I, \xi)$ polarized by the Reeb vector field $\xi$ and this affine variety admits a Ricci-flat K\"ahler cone metric if and only if $(M, g)$ admits a Sasaki-Einstein metric.
 
Let us  denote by $\mathcal{S}(M)$ the space of all Sasakian structures on $M$  with the $C^{\infty}$ Fréchet topology as sections of vector bundles, and let  $\mathcal{S}(M, \xi, \bar{J})$ be the subset of Sasakian structures with Reeb vector field $\xi$ and  transverse holomorphic structure $\bar{J},$ which is endowed with the subspace topology.  Consider $\mathcal{S E}(M)$ the subspace of Sasaki-Einstein metrics in $\mathcal{S}(M, \xi, \bar{J})$ and 
let 
$\mathfrak{Aut}(\bar{J})$ be  the group of complex automorphisms of $(C(M), I)$ that commute with $\Psi-i \xi$. This group descends to an action on $(M, \mathcal{S})$ commuting with $\xi.$   Then one defines the local moduli space of Sasaki-Einstein metrics \cite{Bo} by
$$
\mathfrak{M}^{S E}(M)=\mathcal{S} \mathcal{E}(M) / \mathfrak{Aut}(\bar{J})_0,
$$
where $\mathfrak{A u t}(\bar{J})_0$ denotes  the connected component  of  $\mathfrak{Aut}(\bar{J}).$

As we will see in Subsection 2.2, this space has a very concrete description for links of weighted homogeneous hypersurfaces.
\medskip

\noindent{\bf Weighted homogeneous hypersurfaces:} Consider the weighted $\mathbb{C}^*$-action on the affine space $\mathbb{C}^{n+1},$ defined by
$$
\left(z_0, \ldots, z_n\right) \mapsto\left(\lambda^{w_0} z_0, \ldots, \lambda^{w_n} z_n\right)
$$
where $\mathbf{w}=\left(w_0, \ldots, w_n\right)$ is a sequence of positive integers.
Then we obtain the weighted projective space with a canonical orbifold structure,  defined as the quotient space $$\mathbb{P}(\mathbf{w})=\left(\mathbb{C}^{n+1}-\{\mathbf{0}\}\right) / \mathbb{C}^*.$$ The weighted projective space $\mathbb{P}(\mathbf{w})$ is said to be well-formed if the weighted $\mathbb{C}^*$-action on $\mathbb{C}^{n+1}$ has trivial stabilizers in codimension 1, that is, when $\operatorname{gcd}\left(w_0, \ldots, \widehat{w}_i, \ldots, w_{n}\right)=1$ for each $i.$ Here the hat symbol means delete that corresponding element.  As an algebraic variety, a weighted projective space can be defined as   $$\mathbb{P}\left(w_0, \ldots, w_{n}\right)=\operatorname{Proj}(S(\mathbf{w})),$$ 
where $S(\mathbf{w})=\mathbb{C}\left[x_0, \ldots, x_{n}\right]$ is the graded polynomial ring such that the weight of each $x_i$ equals $w_i.$

Let us recall  that a polynomial $f \in \mathbb{C}\left[z_0, \ldots, z_n\right]$ is said to be a weighted homogeneous polynomial of degree $d$ and weight vector $\mathbf{w}=$ $\left(w_0, \ldots, w_n\right)$,  if for any $\lambda \in \mathbb{C}^{*}$
$$
f\left(\lambda^{w_0} z_0, \ldots, \lambda^{w_n} z_n\right)=\lambda^d f\left(z_0, \ldots, z_n\right).
$$
We  assume that $f$ is chosen so that the affine algebraic variety 
$$
V_f=\{f=0\} \subset \mathbb{C}^{n+1}
$$
is smooth everywhere except at the origin in $\mathbb{C}^{n+1}$ which is equivalent to say  that  
 the  weighted variety $$Z_f=(V_f-\{\mathbf{0}\})/ \mathbb{C}^* \subset \mathbb{P}(\mathbf{w})$$ is quasismooth of dimension $n.$ There are well-known conditions that determine when a specific polynomial determines a quasismooth weighted hypersurface \cite{F,JK}: 
 
 \begin{lem}
 A weighted hypersurface of degree $d$ in $\mathbb{P}\left(w_0, \ldots, w_4\right)$, where $d>w_i$, is quasismooth if and only if the following hold:
\begin{enumerate}
\item For each $i=0, \cdots, 4$ there is a $j$ and a monomial $z_i^{m_i} z_j$ of degree $d$. Here $j=i$ is possible.
\item For all distinct $i, j$ either there is a monomial $z_i^{b_i} z_j^{b_j}$ of degree $d$ or there exist monomials $z_i^{n_1} z_j^{m_1} z_k, z_i^{n_2} z_j^{m_2} z_l$ of degree $d$ with $\{k, l\} \neq\{i, j\}$ and $k \neq l$.
\item For every $i, j$ there exists a monomial of degree $d$ that does not involve either $z_i$ or $z_j$.
\end{enumerate}
\end{lem}
\smallskip

We have the following important result whose proof can be found in \cite{BBG}, Corollary 5.4.8. First, recall that the index $I$ of a weighted hypersurface is given by the difference $I=|\mathbf{w}|-d,$ where 
$|\mathbf{w}|=\sum_{i=0}^n w_i$  denotes the norm of the weight vector  $\mathbf{w}.$ The weighted  hypersurface is said to be Fano if $I>0.$

\begin{theo}
 Let $Z_f \subset \mathbb{P}(w_0, \ldots w_n)$ be a quasismooth weighted homogeneous Fano hypersurface of degree $d$. Then $Z_f$ admits a Kähler-Einstein orbifold metric if the following estimate holds:
$$
d I<\frac{n}{(n-1)} \min _{i, j}\left\{w_i w_j\right\}.
$$
\end{theo}
\medskip

The weighted hypersurface is well-formed if $\mathbb{P}(\mathbf{w})$ is well-formed and $Z_f \cap {\text {sing}}(\mathbb{P}(\mathbf{w}))$ has codimension at least 2 in $Z_f$. When $Z_f$ is well-formed, the canonical divisor satisfies the adjunction formula $K_{Z_f}=\mathcal{O}_{Z_{f}}\left(d-w_0-\cdots-w_{n}\right)$. In \cite{F} a criterion to determine the well-formedness of the weighted variety is given: a hypersurface defined by the weighted homogeneous polynomial $f$ of degree $d$ is well-formed in the well-formed weighted projective space $\mathbb{P}\left(w_0, \ldots, w_n\right)$ if 
 $\operatorname{gcd}\left(w_0, \ldots, \hat{w}_i, \ldots, \hat{w}_j, \ldots, w_n\right) \mid d$
for distinct $i, j=0, \ldots, n$.  
\medskip

\noindent{\bf Links of hypersurface singularities and Sasakian structures.}   Sasakian structures can be manufactured on  links of  hypersurface singularities of weighted homogeneous polynomials and we explain how to do this below.

 A link $L_f(\mathbf{w}, d),$ or  $L_f$  for short, is defined  as the intersection  $V_f \cap S^{2n+1}$, where $S^{2n+1}$ is the $(2n+1)$-sphere in Euclidean space. By the Milnor fibration Theorem \cite{Mi}, $L_f(\mathbf{w}, d)$ is a closed ($n-2)$-connected manifold that bounds a parallelizable manifold with the homotopy type of a bouquet of $n$-spheres. Furthermore, $L_f(\mathbf{w}, d)$ admits a quasiregular Sasaki structure $\mathcal{S}=\left(\xi_{\mathbf{w}}, \eta_{\mathbf{w}}, \Phi_{\mathbf{w}}, g_{\mathbf{w}}\right)$  which is the restriction of the weighted Sasaki structure on the sphere $S^{2 n+1}$ with Reeb vector field $\xi_{\mathrm{w}}=\sum_{k=0}^{n} w_k \left ( y_k \partial_{{x}_k}-x_k\partial_{{y}_k} \right )$ and contact form $\eta_{\mathbf{w}}=\frac{\eta}{\sum_{i=0}^n w_i\left(\left(x_i\right)^2+\left(y_i\right)^2\right)},$ where $\eta$ denotes the standard contact 1-form on the sphere $S^{2n+1}.$
 If one considers the locally free $S^1$-action induced by the weighted $\mathbb{C}^{*}$ action on $V_f,$ the quotient space of the link  $L_{f}(\mathbf{w}, d)$ by this action is the weighted hypersurface $Z_{f},$ a K\"ahler orbifold. We have the following commutative diagram \cite{BBG}

\begin{equation*}
\begin{CD} 
 L_{f}(\mathbf{w}, d) @> {\qquad\qquad}>>  S^{2n+1}_{\bf w}\\
@VV{\pi}V  @VVV\\
Z_{f}  @> {\qquad\qquad}>>  {\mathbb P}({\bf w}),
\end{CD}
\end{equation*}
where $S_{\mathbf{w}}^{2 n+1}$ denotes the unit sphere with a weighted Sasakian structure, $\mathbb{P}(\mathbf{w})$ is a weighted projective space coming from the quotient of $S_{\mathbf{w}}^{2 n+1}$ by a weighted circle action generated from the weighted Sasakian structure. The top horizontal arrow is a Sasakian embedding, the bottom arrow is a K\"ahlerian embedding and  the vertical arrows are orbifold Riemannian submersions. 

It follows from the orbifold adjunction formula that the  link $L_f$ admits a positive Ricci curvature if the quotient orbifold $Z_{f}$ by the natural  $S^1$-action  is Fano.  

In \cite{Kob}, Kobayashi showed that  the link of a cone over a smooth projective variety $Z \subset \mathbb{P}^n$ carries a natural Einstein metric if and only if $Z$ is Fano and $Z$ carries a Kähler-Einstein metric. In \cite{BG1}, the authors generalized this result to weighted cones and furthermore gave an algorithm, the Kobayashi-Boyer-Galicki method, to obtain $(n-1)$-connected Sasaki-Einstein $(2n+1)$-manifolds from the existence of  orbifold Fano Kähler-Einstein hypersurfaces $Z_{f}$ in weighted projective $2n$-space $\mathbb{P}(\mathbf{w}).$  
\medskip 

Important topological information of the link can be obtained via the Alexander polynomial. Recall that the  Alexander  polynomial $\Delta_f(t)$  \cite{Mi} associated to a link $L_f$ of dimension $(2n-1)$ is the characteristic polynomial of the monodromy map $h_*: H_{n}(F, \mathbb{Z}) \rightarrow H_{n}(F, \mathbb{Z})$  which is induced by the circle action on the Milnor fibre $F$. Then 
$\Delta_f(t)=\operatorname{det}\left(t {\mathbb I}-h_*\right)$. 
Now both $F$ and its closure $\bar{F}$ are homotopy equivalent to a bouquet of $n$-spheres $S^n \vee \cdots \vee S^n,$ and the boundary of $\bar{F}$ is the link $L_f$, which is $(n-2)$-connected. The Betti numbers $b_{n-1}\left(L_f\right)=b_{n}\left(L_f\right)$ equal the number of factors of $(t-1)$ in $\Delta_f(t)$.  From the  Wang sequence of the Milnor fibration (see \cite{Or}) $$\quad 0 \longrightarrow H_n\left(L_f, \mathbb{Z}\right) \longrightarrow H_n(F, \mathbb{Z}) \stackrel{\mathbb{I}-h_*}{\longrightarrow} H_n(F, \mathbb{Z}) \longrightarrow H_{n-1}\left(L_f, \mathbb{Z}\right) \longrightarrow 0$$ one obtains that  $L_f$ is a $\mathbb{Q}$-homology sphere if and only if $\Delta_f(1) \neq 0$ and the order  of $H_{n-1}\left(L_f, \mathbb{Z}\right)$ equals $|\Delta_f(1)|.$ 
In the case that $f$ is a weighted homogeneous polynomial  there is an algorithm due to Milnor and Orlik \cite{MO} to compute the Alexander polynomial in terms of the degree and the weights:  associate to any monic polynomial $f$ with roots $\alpha_1, \ldots, \alpha_k \in \mathbb{C}^*$ its divisor
$$
\operatorname{div} f=\left\langle\alpha_1\right\rangle+\cdots+\left\langle\alpha_k\right\rangle
$$
as an element of the integral ring $\mathbb{Z}\left[\mathbb{C}^*\right].$ Let  $\Lambda_n=\operatorname{div}\left(t^n-1\right)$. 
Then the divisor of $\Delta_f(t)$ is given by
\begin{equation*}
\operatorname{div} \Delta_f=\prod_{i=0}^n\left(\frac{\Lambda_{u_i}}{v_i}-\Lambda_1\right),
\end{equation*}
where the $u_i's$  and $v_i's$ are given  terms of the degree $d$ of $f$ and the weight vector ${\bf w}=(w_0,\ldots w_n)$ by the equations 
\begin{equation*}
u_i=\frac{d}{\operatorname{gcd}\left(d, w_i\right)}, \quad v_i=\frac{w_i}{\operatorname{gcd}\left(d, w_i\right)}.
\end{equation*}

\noindent{\bf Invertible polynomials and rational homology spheres:} In \cite{CL,CGL} we used the Kobayashi-Boyer-Galicki method to  establish  the existence of Sasaki-Einstein metrics on links of invertible polynomials. 
From  \cite{BGN2} and \cite{CL} one notices that all the invertible polynomials  taken from the list of Johnson and Koll\'ar of anticanonically embedded Fano 3-folds \cite{JK}  producing  Sasaki-Einstein rational homology 7-spheres were polynomials of cycle type, chain type, or iterated Thom-Sebastiani sums of these types. These polynomials  can be described in terms of the following two sets of conditions 
\begin{itemize}
\item The weights and the degree satisfy $\operatorname{gcd}(d,w_{i})=1$ for all $i=0,\dots 4,$ which leads to singularities of cycle type.
\item The weights are subject to  $\left(w_0, w_1, w_2, w_3, w_4\right)=\left(m_3 v_0, m_3 v_1, m_2 v_2, m_2 v_3, m_2 v_4\right)$ 
with  $\gcd(m_{2},m_{3})=1$ and $m_{2}m_{3}=d,$  which leads to singularities that can described as iterated  Thom-Sebastiani sums of chain, cycle type and Fermat singularities. More precisely, the types of singularities obtained have the following form: 
\begin{align*}
    \mbox{Type I (Fermat-Cycle): } & z_{0}^{a_{0}}+z_{1}^{a_{1}}+z_{4}z_{2}^{a_{2}}+z_{2}z_{3}^{a_{3}}+z_{3}z_{4}^{a_{4}}\\
    \mbox{Type II (Chain-Cycle): } & z_{0}^{a_{0}}+z_{0}z_{1}^{a_{1}}+z_{4}z_{2}^{a_{2}}+z_{2}z_{3}^{a_{3}}+z_{3}z_{4}^{a_{4}}\\
    \mbox{Type III (Cycle-Cycle): } & z_{1}z_{0}^{a_{0}}+z_{0}z_{1}^{a_{1}}+z_{4}z_{2}^{a_{2}}+z_{2}z_{3}^{a_{3}}+z_{3}z_{4}^{a_{4}}.
\end{align*}

\end{itemize}

Polynomials with this set of conditions satisfy  the following statements whose proofs can be found in \cite{BGN2} and/or in the proof of Theorem 3.2 in \cite{CL}:

\begin{enumerate}
\item Consider links $L(\textbf{w}, d)$ of  weighted homogeneous polynomials $f$ of the first kind, that is, with $\operatorname{gcd}(d,w_{i})=1$ for all $i=0,\dots 4,$ then the Milnor number 
$m(L_f)$ for $L_f$ is given by  $$m(L_f) + 1 = d(b_{n-1}+1) \,\, \mathrm{ and } \,\,
H_{3}\left(L_f, \mathbb{Z}\right)_{t o r}=\mathbb{Z}_d.$$  In particular,   if $f$ is given as a polynomial of cycle type 
$$f=z_{4}z_{0}^{a_{0}}+z_{0}z_{1}^{a_{1}}+z_{1}z_{2}^{a_{2}}+z_{2}z_{3}^{a_{3}}+z_{3}z_{4}^{a_{4}}$$
of degree $d$ in the projective space $\mathbb{P}(\textbf{w})$, where  $\textbf{w}=(w_{0},w_{1},w_{2},w_{3},w_{4})$  then from the equations 
$$\begin{aligned}
& a_0 w_0+w_4=d, \quad a_1 w_1+w_0=d, \\
& \quad a_2 w_2+w_1=d, \quad a_3 w_3+w_2=d, \quad a_4 w_4+w_3=d 
\end{aligned}
$$
we obtain 
$$a_0 a_1 a_2 a_3 a_4=\left(\frac{d-w_4}{w_0}\right)\left(\frac{d-w_0}{w_1}\right)\left(\frac{d-w_1}{w_2}\right)\left(\frac{d-w_2}{w_3}\right)\left(\frac{d-w_3}{w_4}\right)=m\left(L_f\right).$$
So in the case the link is a rational homology 7-sphere we obtain 
\begin{equation}
a_0 a_1 a_2 a_3 a_4=d-1.
\end{equation}

\item Consider links $L(\textbf{w}, d)$ of the second kind, that it, such that the weight vectors 
$\textbf{w}=(w_{0},w_{1},w_{2},w_{3},w_{4})$ satisfy  $\textbf{w}=(w_{0},w_{1},w_{2},w_{3},w_{4})=(m_{3}v_{0},m_{3}v_{1},m_{2}v_{2},m_{2}v_{3},m_{2}v_{4})$,  $\gcd(m_{2},m_{3})=1$ and $m_{2}m_{3}=d.$ 
One  obtains  the equality 
$$\operatorname{div}\Delta_f=\alpha({\bf w})\beta({\bf w})\Lambda_d+\beta({\bf w})\Lambda_{m_3}-\alpha({\bf w})\Lambda_{m_2}-\Lambda_1,$$ 
with the two positive  integers $\alpha(\textbf{w})$ and $\beta(\textbf{w})$ depending on the weights:
\begin{equation}
\alpha(\textbf{w})=\frac{m_{2}}{v_{0}v_{1}}-\frac{1}{v_{0}}-\frac{1}{v_{1}}
\end{equation}
and 

\begin{equation}
\beta(\textbf{w})=\left (\frac{m_3}{v_2v_3}-\frac{1}{v_3}-\frac{1}{v_2}\right )\left (\frac{m_3}{v_4}- 1\right ) + \frac{1}{v_4}.
\end{equation}
It is known that if  the link is a rational homology sphere,  then  $\beta({\bf w})=1.$ 

Furthermore, if $f$ has a cycle block of the form $$z_{4}z_{2}^{a_{2}}+z_{2}z_{3}^{a_{3}}+z_{3}z_{4}^{a_{4}},$$ then  
$$
a_2 w_2+w_4=d, \quad a_3 w_3+w_2=d \text { and } a_4 w_4+w_3=d.
$$
From the assumptions on the weight vector $\mathbf{w}$ we have
$$
\begin{aligned}
a_2 a_3 a_4 & =\left(\frac{d-w_4}{w_2}\right)\left(\frac{d-w_2}{w_3}\right)\left(\frac{d-w_3}{w_4}\right) \\
& =\left(\frac{m_3-v_4}{v_2}\right)\left(\frac{m_3-v_2}{v_3}\right)\left(\frac{m_3-v_3}{v_4}\right) \\
& =\frac{m_3^3-\left(v_2+v_3+v_4\right) m_3^2+\left(v_2 v_3+v_2 v_4+v_3 v_4\right) m_3-v_2 v_3 v_4}{v_2 v_3 v_4} \\
& =m_3\left(\frac{m_3^2-\left(v_2+v_3+v_4\right) m_3+v_2 v_3+v_2 v_4+v_3 v_4}{v_2 v_3 v_4}\right)-1 .
\end{aligned}
$$
Since the corresponding link is a $\mathbb{Q}$-homology sphere, it follows that 
$$
\beta(\mathbf{w})=\frac{m_3^2-\left(v_2+v_3+v_4\right) m_3+v_2 v_3+v_2 v_4+v_3 v_4}{v_2 v_3 v_4}=1.
$$
Substituting this equality in the previous equation, we obtain
\begin{equation}
a_2 a_3 a_4+1=m_3.
\end{equation}
\end{enumerate}
\medskip

\noindent{\bf The Berglund-H\"ubsch transpose rule:} 
Recall that the Berglund-Hübsch transpose rule considers an invertible polynomial $$f=\sum_{i=0}^n \prod_{j=0}^n x_j^{a_{i j}}$$ cutting out an orbifold of degree $d$ in $\mathbb{P}(\bf w)$ and defines the transpose polynomial $f^T$ by transposing the exponential  matrix $A=\left(a_{i j}\right)$ of the original polynomial, that is,
$$
f^T=\sum_{i=0}^n \prod_{j=0}^n x_j^{a_{j i}},
$$
again an invertible  polynomial that cuts out an orbifold of degree $\tilde{d}$ in $\mathbb{P}(\tilde{\bf w}).$ Then one considers the links 
$L_f(\mathbf{w}, d)$ and  $L_{f^{T}}(\tilde{\mathbf{w}}, \tilde{d})$ associated to each of these polynomials. 
We will sometimes say that these two links are {\it Berglund-H\"ubsch duals} to one another. 
The following diagram succinctly summarizes the procedure described above, where BH denotes the Berglund-H\"ubsch transpose rule:
\begin{center}
\begin{tikzcd}
{f=0} \arrow{d}\arrow{r}{BH} & {f^T=0}\arrow{d}\\
L_f(\mathbf{w}, d)  \arrow{r}{BH} & L_{f^{T}}(\tilde{\mathbf{w}}, \tilde{d}).\\
\end{tikzcd}
\end{center}
In \cite{CGL} the Berglund-H\"ubsch transpose rule is used to produce Sasaki-Einstein links with the $\mathbb{Q}$-homology of a 7-sphere. In particular, we  found that this rule only produces {\it twins} for singularities of cycle type or of type I and type III. Recall \cite{BGN2, CGL} that  two links $L_f$ and $L_g$ of an isolated hypersurface singularity are called twins if both are $\mathbb{Q}$-homology $(2 n+1)$-spheres and they satisfy 
$m\left(L_f\right)=m\left(L_g\right), d_g=d_f$, and $H_n\left(L_f, \mathbb{Z}\right)=H_n\left(L_g, \mathbb{Z}\right)$.
However, for polynomials of type  II, the Berglund-H\"ubsch transpose rule does not preserve neither torsion nor Milnor number. Their dual links  will receive special attention is Subsection 3.3.

\subsection{Deformations of transverse holomorphic Sasakian structures}
Locally the moduli space of Sasaki isotopy classes is determined by the deformation theory of the transverse holomorphic structure of the foliation $\mathcal{F}_{\xi}.$  So the usual thing to do is to fix the contact structure and deform the transverse holomorphic structure via Kodaira-Spencer theory.

A germ of a deformation of a transverse holomorphic foliation $\mathcal{F}_{\xi}$ on  $M$ with base space $(B, 0)$ is given by an open cover $\left\{U_\alpha\right\}$ of $M$ and a family of local submersions $f_{\alpha, t}: U_\alpha \rightarrow \mathbb{C}^n$ parametrized by $(B, 0)$ that are holomorphic in $t \in B$ for each $x \in U_\alpha$. For $\Theta_{\mathcal{F}_{\xi}}$ denoting the sheaf of transversely holomorphic vector fields on $M$, we have a Kodaira-Spencer map $\rho: T_0 B \rightarrow H^1\left(M, \Theta_{\mathcal{F}_{\xi}}\right)$ that sends $\frac{\partial}{\partial t}$ to a certain class in $H^1\left(M, \Theta_{\mathcal{F}_{\xi}}\right)$ defined by a section $\theta_{\alpha, \beta}$ of the sheaf $\Theta_{\mathcal{F}_{\xi}} \mid U_\alpha \cap U_\beta$. One can consider the full cohomology ring $H^*\left(M, \Theta_{\mathcal{F}_{\xi}}\right)$, these were proven to be finite dimensional. 
In \cite{GHS} it is shown that there is a versal Kuranishi space of deformations given by the map $\Phi: U \rightarrow H^2\left(M, \Theta_{\mathcal{F}_{\xi}}\right)$, for $U$ open set in $H^1\left(M, \Theta_{\mathcal{F}_{\xi}}\right)$, here, as in the complex case, the base of parametrizations is given by $\Phi^{-1}(0)$. We have that if $H^2\left(M, \Theta_{\mathcal{F}_{\xi}}\right)=0$, then the Kuranishi family of deformations of $\mathcal{F}_{\xi}$ is isomorphic to an open set in $H^1\left(M, \Theta_{\mathcal{F}_{\xi}}\right)$. Otherwise, the Kuranishi space may be singular. For a quasiregular Sasakian structure $\mathcal{S}=(\xi, \eta, \Phi, g)$ with quotient orbifold ${Z}$ one obtains the following sequence \cite{BBG} 
$$0 \longrightarrow H^1\left({Z}, \Theta_{{Z}}\right) \longrightarrow H^1\left(M, \Theta_{\mathcal{F}_{\xi}}\right) \longrightarrow H^0\left({Z}, \Theta_{{Z}}\right) \longrightarrow H^2\left({Z}, \Theta_{{Z}}\right)$$ where  $\Theta_{{Z}}$ denotes the sheaf of germs of holomorphic vector fields on the complex orbifold ${Z}$ and $H^0\left({Z}, \Theta_{{Z}}\right)$ can be considered as the Lie algebra of the group of holomorphic automorphisms of ${Z}.$  Thus the deformation of the transverse holomorphic structures of the foliation can be understood in terms of the deformation of the complex structure of the orbifold, which are described by $H^1\left({Z}, \Theta_{{Z}}\right)$ and in terms of the deformations of the Reeb vector in  $H^0\left({Z}, \Theta_{{Z}}\right)$ which are described by the Sasaki cone \cite{BGS}. Of course, if $H^0\left({Z}, \Theta_{{Z}}\right)=0$ we obtain an isomorphisms between  $H^1\left (M, \Theta_{\mathcal{F}_{\xi}}\right )$  and $H^1\left({Z}, \Theta_{{Z}}\right).$ 
Deformations of  the transversely holomorphic structure may not remain Sasakian unless the 
$(0,2)$ component of the basic Euler class $\left[d \eta^{0,2}\right] \in H^{(0,2)}_B(M, \mathcal{F}_{\xi})$ is zero, where $H^{0,2}_B(M, \mathcal{F}_{\xi})$ is the basic Dolbeault cohomology for the transversal complex structure. However, in case the Sasakian structure is positive, which is the case we are interested in this article, it has been proven in \cite{No} that $H^{0,q}_B(M, \mathcal{F}_{\xi})=0$ for $q>0.$ 

We pay particular attention to the local  moduli of quasiregular Sasakian structures on links of isolated singularities, so we focus on orbifolds that are  quasismooth weighted hypersurfaces in certain weighted projective spaces. Moreover, all the orbifolds under discussion in this article have finite automorphism group, so we will assume $H^0\left({Z}, \Theta_{{Z}}\right)=0.$ The proof of the  next  two theorems can be found in \cite{BBG}. (See also \cite{BGK}.)

\begin{theo} Let $Z_f$ be a quasismooth weighted hypersurface in $\mathbb{P}(\mathbf{w})$ corresponding to the weighted homogenous polynomial $f$ of degree $d$ and weight vector $\mathbf{w}=\left(w_0, \ldots, w_n\right)$ with $H^0\left(Z_f, \Theta_{Z_f}\right)=0.$ Assume also that $n \geq 3$. Then the complex orbifolds $Z_f$ form a continuous family of complex dimension
\begin{equation}
\dim_{\mathbb C} H^{0}(\mathbb{P}(\textbf{w}),\mathcal{O}({d}))-\sum_{i}\dim_{\mathbb C} H^{0}(\mathbb{P}(\mathbf{w}),\mathcal{O}(w_{i})).
\end{equation}
Furthermore, if  the index $I=|\mathbf{w}|-d>0$  and $Z_f$ admits a Kähler-Einstein metric for a generic $f$ then it admits a $2\left[h^0(\mathbb{P}(\mathbf{w}), \mathcal{O}(d))-\sum_i h^0\left(\mathbb{P}(\mathbf{w}), \mathcal{O}\left(w_i\right)\right)\right]$ dimensional family of Kähler-Einstein metrics up to homothety.
 \end{theo}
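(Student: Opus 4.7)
The strategy is to parametrize the complex structures on $Z_f$ by the polynomials $f$ themselves and then quotient by the action of the automorphism group of the ambient weighted projective space. I would first identify the space of quasi-smooth degree-$d$ hypersurfaces with an open subset of $\mathbb{P}(H^0(\mathbb{P}(\mathbf{w}),\mathcal{O}(d)))$, since two polynomials cut out the same orbifold exactly when they are proportional. Next, using the weighted analogue of the Euler sequence, I would identify the connected component of $\mathfrak{Aut}(\mathbb{P}(\mathbf{w}))$ with substitutions $z_i\mapsto g_i(z)$ in which each $g_i$ is weighted homogeneous of degree $w_i$, taken modulo the global $\mathbb{C}^*$ rescaling; this yields $\dim\mathfrak{Aut}(\mathbb{P}(\mathbf{w}))=\sum_i h^0(\mathbb{P}(\mathbf{w}),\mathcal{O}(w_i))-1$. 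An orbit--stabilizer count, with the stabilizer of $f$ matched with (the connected component of) $\mathfrak{Aut}(Z_f)$, then gives $(h^0(\mathcal{O}(d))-1) - (\sum_i h^0(\mathcal{O}(w_i))-1) + \dim\mathfrak{Aut}(Z_f)$, in which the two $-1$'s cancel to reproduce the asserted formula.

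To upgrade this heuristic count to a statement about the genuine Kuranishi dimension $\dim H^1(Z_f,\Theta_{Z_f})$, I would appeal to Kodaira--Spencer deformation theory in the orbifold setting via the normal bundle sequence
\[
0 \longrightarrow \Theta_{Z_f} \longrightarrow \Theta_{\mathbb{P}(\mathbf{w})}|_{Z_f} \longrightarrow \mathcal{O}_{Z_f}(d) \longrightarrow 0
\]
and the restriction sequence $0\to\mathcal{O}_{\mathbb{P}(\mathbf{w})}(-d)\to\mathcal{O}_{\mathbb{P}(\mathbf{w})}\to\mathcal{O}_{Z_f}\to 0$. The hypothesis $n\geq 3$ (so that $\dim_{\mathbb{C}} Z_f\geq 2$) is precisely what allows one to invoke Bott-type vanishing on the quasi-smooth weighted hypersurface, forcing $H^1(Z_f,\Theta_{\mathbb{P}(\mathbf{w})}|_{Z_f})=0$ and the higher cohomology of $\mathcal{O}_{Z_f}(d)$ to vanish, following the orbifold adaptations developed in \cite{BBG}. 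With these vanishings the long exact sequence collapses to
\[
\dim H^1(Z_f,\Theta_{Z_f}) = h^0(Z_f,\mathcal{O}(d)) - h^0(Z_f,\Theta_{\mathbb{P}(\mathbf{w})}|_{Z_f}) + h^0(Z_f,\Theta_{Z_f}),
\]
and identifying the first two groups with $H^0(\mathbb{P}(\mathbf{w}),\mathcal{O}(d))/\langle f\rangle$ and $H^0(\mathbb{P}(\mathbf{w}),\Theta_{\mathbb{P}(\mathbf{w})})$ respectively reproduces the orbit--stabilizer formula. I expect the main technical obstacle to be verifying that the orbifold singularities of $\mathbb{P}(\mathbf{w})$ do not spoil the required vanishings, which forces one to work systematically with cyclic orbifold sheaf cohomology rather than with smooth ambient spaces.

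For the final assertion on K\"ahler--Einstein metrics, I would combine the complex-analytic count above with Bando--Mabuchi uniqueness up to the identity component of the automorphism group. Under the hypothesis that a K\"ahler--Einstein metric exists for generic $f$, openness of the K-stability condition implies that the set of $Z_f$ admitting such a metric is open in the moduli, so each of the $N$ complex parameters produces a genuine variation of the K\"ahler--Einstein structure. Passing from complex to real parameters (with the single remaining homothety factor accounting for the trivial rescaling direction) then yields the advertised family of K\"ahler--Einstein metrics of real dimension $2N$ up to homothety.
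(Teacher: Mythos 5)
The paper does not prove this theorem itself; it is quoted verbatim from the references \cite{BBG} and \cite{BGK}, where the argument is exactly the orbit--stabilizer parameter count you give in your first paragraph: quasi-smooth members of $|\mathcal{O}(d)|$ modulo the group of graded-ring substitutions $z_i\mapsto g_i(z)$, $\deg g_i = w_i$, with the two $-1$'s from projectivization cancelling and the stabilizer contributing $\dim\mathfrak{Aut}(Z_f)$. So your core route coincides with the cited proof, and the final passage to a $2N$-real-dimensional family of K\"ahler--Einstein metrics (uniqueness of the KE metric modulo automorphisms plus openness of existence when $\mathfrak{Aut}(Z_f)$ is as controlled here) is also the argument used there. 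Two caveats. First, you misplace the role of the hypothesis $n\geq 3$: in \cite{BGK}/\cite{BBG} it is not invoked for Bott-type vanishing but to guarantee (via a weighted Grothendieck--Lefschetz type statement) that isomorphisms between the quasi-smooth hypersurfaces $Z_f$ are induced by automorphisms of the ambient $\mathbb{P}(\mathbf{w})$; without this the orbit count would only classify hypersurfaces up to projective equivalence, not the orbifolds up to isomorphism, and the theorem would not follow. Second, your Kodaira--Spencer upgrade via the normal bundle and Euler sequences is a genuinely different (and stronger) route, since the theorem as stated only asserts the dimension of the continuous family, not that it exhausts $H^1(Z_f,\Theta_{Z_f})$; but the vanishings you assume, in particular $H^1(Z_f,\Theta_{\mathbb{P}(\mathbf{w})}|_{Z_f})=0$ and the surjectivity of $H^0(\mathbb{P}(\mathbf{w}),\mathcal{O}(d))\to H^0(Z_f,\mathcal{O}(d))$ on a singular ambient space, are nontrivial for general weights and are asserted rather than proved, so that part of your write-up should either be carried out with explicit orbifold (Steenbrink--Dolgachev) vanishing theorems or dropped as unnecessary for the statement.
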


For quasiregular Sasaki-Einstein metrics we have the following: 

\begin{theo}  Let $M$ be a smooth compact manifold and let $\mathcal{S}=\left(\xi, \eta, \Phi_t, g_t\right)$ be a family of quasiregular Sasaki-Einstein structures on $M$ induced by a continuous family of inequivalent complex orbifolds $\mathcal{Z}_t$ with Kähler-Einstein metrics. Then the metrics $g_t$ are inequivalent as Sasaki-Einstein metrics. 
\end{theo}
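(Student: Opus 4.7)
The plan is to argue by contradiction: if $g_t$ and $g_{t'}$ were equivalent as Einstein metrics for some $t \neq t'$, I will construct a biholomorphism of complex orbifolds $\mathcal{Z}_t \isom \mathcal{Z}_{t'}$, contradicting the inequivalence hypothesis on the family $\{\mathcal{Z}_t\}$. So assume $\phi \colon M \to M$ is a diffeomorphism with $\phi^\ast g_{t'} = g_t$.

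The first step is to lift $\phi$ to the Riemannian cone. Because $\bar g_t = dr^2 + r^2 g_t$, the map $\tilde\phi(x,r) = (\phi(x),r)$ is an isometry $(C(M),\bar g_t) \to (C(M),\bar g_{t'})$ that commutes with the Liouville field $\Psi = r\del_r$. By the Sasaki-Einstein hypothesis each cone carries a Ricci-flat K\"ahler structure, with complex structures $I_t$, $I_{t'}$ satisfying $\xi_t = I_t\Psi$ and $\xi_{t'} = I_{t'}\Psi$. The critical ingredient is a rigidity assertion: up to post-composing $\tilde\phi$ with an element of $\mathfrak{Aut}(\bar J)_0$, which by definition acts on the cone by holomorphic isometries commuting with $\Psi$, one may arrange $\tilde\phi^\ast I_{t'} = I_t$ and in particular $\tilde\phi_\ast \xi_t = \xi_{t'}$. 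This is the cone analogue of the Matsushima-type result that isometries of an irreducible K\"ahler-Einstein manifold are holomorphic modulo the connected automorphism group; it uses parallelism of the complex structure under the Calabi-Yau holonomy together with preservation of $\Psi$.

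Granted this rigidity, the remainder is formal. In the family data $(\xi,\eta,\Phi_t,g_t)$ the Reeb vector field is literally constant in $t$, so $\xi_t = \xi_{t'} = \xi$. Thus (the adjusted) $\phi$ preserves the characteristic foliation $\mathcal{F}_\xi$ and descends to a diffeomorphism $\bar\phi \colon \mathcal{Z}_t \to \mathcal{Z}_{t'}$ of the orbifold leaf spaces. Since $\tilde\phi$ intertwines $I_t$ and $I_{t'}$, the descended $\bar\phi$ is an orbifold biholomorphism, contradicting the inequivalence assumption. Note that the assumption that the family consists of genuine Sasaki-Einstein metrics is what allows the cone to carry a \emph{Ricci-flat} K\"ahler structure and thus makes the rigidity step available.

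The main obstacle is precisely this rigidity step: pinning down a Riemannian isometry between two Calabi-Yau cone metrics on $C(M)$, preserving the Liouville field, as holomorphic modulo $\mathfrak{Aut}(\bar J)_0$. Once this is in hand, the passage to the orbifold quotient and identification of $\bar\phi$ as a biholomorphism are routine. A subtle point to address is the possibility that the isometry $\phi$ is orientation-reversing on the leaves of $\mathcal{F}_\xi$ (in which case one would get an anti-holomorphism of orbifolds); this is ruled out in the positive Sasaki-Einstein setting by using that $\xi$ is a unit Killing field of fixed sign and that the basic Dolbeault cohomology $H^{0,q}_B(M,\mathcal{F}_\xi)$ vanishes for $q>0$, so no conjugate-linear ambiguity can enter after projection to $\mathcal{Z}_t$.
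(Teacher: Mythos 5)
The paper does not prove this statement itself: it is quoted from the literature (the text explicitly says the proof ``can be found in \cite{BBG}'', see also \cite{BGK}), so the comparison is with the standard argument there. Your overall strategy coincides with that argument --- assume an isometry $\phi^{*}g_{t'}=g_t$, upgrade it to a map respecting the transverse/cone complex structures, and descend to a biholomorphism $\mathcal{Z}_t\cong\mathcal{Z}_{t'}$, contradicting inequivalence. The problem is that the step you yourself flag as ``the main obstacle'' is the entire mathematical content of the theorem, and as written it is asserted rather than proved, and partly misstated. The correct rigidity input is a holonomy argument: by Gallot's theorem the Ricci-flat K\"ahler cone over a compact Sasaki--Einstein manifold is either flat or has irreducible holonomy contained in $SU(m)$; when the holonomy is exactly $SU(m)$, a cone isometry commuting with $r\partial_r$ pulls back the parallel complex structure $I_{t'}$ to a parallel complex structure compatible with $\bar g_t$, hence to $\pm I_t$ --- no post-composition with $\mathfrak{Aut}(\bar J)_0$ is needed (and, contrary to what you write, $\mathfrak{Aut}(\bar J)_0$ does not act by isometries, so ``modulo $\mathfrak{Aut}(\bar J)_0$'' does not make the map an isometric holomorphism). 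Your sketch silently excludes the two degenerate holonomy cases (flat, i.e.\ constant curvature, and hyperk\"ahler, i.e.\ $3$-Sasakian, where there is a $2$-sphere of parallel complex structures and $\xi$ need not be preserved); the cited proof treats these separately, and a complete argument must at least say why they do not occur or why the conclusion survives there.

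The second genuine gap is your treatment of the sign ambiguity $\tilde\phi^{*}I_{t'}=-I_t$. In that case $\tilde\phi_{*}\xi=-\xi$ and the descended map is an anti-biholomorphism $\mathcal{Z}_t\to\mathcal{Z}_{t'}$, i.e.\ a biholomorphism onto the conjugate orbifold $\overline{\mathcal{Z}_{t'}}$; this must be reconciled with the meaning of ``inequivalent'' for the family (in the applications the family is stable under conjugation of coefficients, so one still gets a contradiction, but that requires saying so). Your proposed fix --- invoking the vanishing of $H^{0,q}_{B}(M,\mathcal{F}_{\xi})$ for $q>0$ --- is not a valid argument: that vanishing controls deformations of positive Sasakian structures and has no bearing on whether an isometry can reverse the transverse complex structure. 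So: right skeleton, same route as the cited proof, but the load-bearing rigidity step and the holomorphic-versus-antiholomorphic dichotomy both need actual arguments before this can be accepted.
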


From these two theorems it is clear that one can compute the dimension of the local moduli of Sasaki-Einstein metrics through the information given by the moduli of K\"ahler-Einstein metrics on Fano orbifolds. We will do this for rational homology 7-spheres that are obtained as links of invertible polynomials.

 The procedure to determine the moduli of  K\"ahler-Einstein metrics on Fano orbifolds  goes as follows
 \begin{enumerate}
     \item[(a)]  Consider $X_d \subset \mathbb{P}\left(w_0, \ldots, w_{n}\right)$ a well-formed and quasismooth weighted projective hypersurface of degree $d$ with finite automorphism group, 
     cut out by an invertible polynomial $f$ of certain defined type. 
     \item[(b)] Determine the linear system 
 $${\mathcal{X}}_d=\left|\mathcal{O}_{\mathbb{P}(\bf w)}(d)\right|=\mathbb{P}H^0(\mathbb{P}(\mathbf{w}), \mathcal{O}(d)),$$ 
 that is, the parameter space for all hypersurfaces of degree $d$ in $\mathbb{P}(\mathbf w).$ 
 \item[(c)] Determine the automorphism group $\mathcal{G}(\bf w)$ of $ \mathbb{P}(\mathbf w).$ 
 \item[(d)] Since $\mathcal{G}(\bf w)$  acts on $\left|\mathcal{O}_{\mathbb{P}(\bf w)}(d)\right|$ and there is an inclusion of the  set $\mathcal{X}_d^{QS}$ of quasismooth hypersurfaces of degree $d$ in $\mathbb{P}(\bf w),$  as an open set in $\mathcal{X}_d,$ one obtains  the quotient 
 $$\mathcal{X}_d^{QS}/\mathcal{G}(\bf w),$$ which is a coarse moduli space, see \cite{KeM} Corollary 1.2. 
 Actually, since $$\mathcal{X}_d^{QS}/\mathcal{G}({\bf{w}})\subset\mathbb{P}H^0(\mathbb{P}({\bf{w}}), \mathcal{O}(d))/\mathbb{P}G({\bf{w}})=H^0(\mathbb{P}({\bf{w}}), \mathcal{O}(d))/G({\bf{w}}),$$ where $G(\mathbf{w})$ is the group of automorphisms of the graded ring $S(\mathbf{w}),$ we will give a precise description of the former quotient. 
 \item[(e)] Additionally, we will assume that the weighted hypersurface $X_d\subset \mathbb{P}\bf(w)$ satisfies the estimate $d I<\frac{n}{(n-1)} \min _{i, j}\left\{w_i w_j\right\}$ in Theorem 2.1.1, which implies the existence of  K\"ahler-Einstein metrics in all the elements in $\mathcal{X}_d^{QS}.$ It follows from Theorems 2.2.1 and 2.2.2 that the number of parameters of the space of inequivalent Sasaki-Einstein metrics on the corresponding link $L_f=V_f\cap S^{2n+1}$  is given by the dimension of the moduli  $\mathcal{X}_d^{QS}/\mathcal{G}(\bf w).$ 
\end{enumerate}

Notice that well-formedness is required, otherwise we only obtain upper bounds in the dimension of the moduli space. For instance consider $Z_9\subset \mathbb{P}(3,3,3,3,3)$ a hypersurface of degree 9 which is isomorphic as a variety to   $Z_3\subset \mathbb{P}(1,1,1,1,1)$ a hypersurface of degree 3 with the same equation. In Subsection 3.3, where we study the moduli of the Berglund-H\"ubsch duals, we  deal with non well-formed weighted hypersurface where the procedure descried above  suffices to determine the precise dimension of the moduli in most cases. In Section 4 we also discuss the role of the boundary points of $\overline{\mathcal{X}_d^{QS}/\mathcal{G}(\bf w)}$, that is, the non-quasismooth weighted varieties,  in the construction of Sasaki-Einstein metrics on non-smooth links.

\section{Local moduli of Sasaki-Einstein metrics  on smooth rational homology 7-spheres}
In this section we will study the space of deformations of polynomials of cycle type, type I, type II and type III. We focus only on these types of singularities for the reasons that were argued in Section 2.1: if a link $L_f$ is a smooth rational homology 7-sphere and admits Sasaki-Einstein metric then $f$ has to be one of the aforementioned types. In the process  we find the monomials generating $H^0(\mathbb{P}(\mathbf{w}), \mathcal{O}(d))$ and the group of automorphisms of the weighted projective space 
$\mathbb{P}(\mathbf{w})$ that contains the corresponding orbifold. 

\subsection{Rational homology 7-spheres: the cycle type}

Let $f$ be an invertible polynomial of the form
\begin{equation}    f=z_{4}z_{0}^{a_{0}}+z_{0}z_{1}^{a_{1}}+z_{1}z_{2}^{a_{2}}+z_{2}z_{3}^{a_{3}}+z_{3}z_{4}^{a_{4}}
\end{equation}
of degree $d$ and associated weight vector $\textbf{w} =(w_{0},w_{1},w_{2},w_{3},w_{4})$ with  $\gcd(d,w_{i})=1,$ which in particular implies that the weighted hypersurface $Z_f\subset \mathbb{P}(\bf{w})$ is well-formed.
From (3.1.1), we have the following relation between the weights $w_{i}$'s and the degree:
\begin{equation}
    a_{0}w_{0}+w_{4}=d, \ \ a_{1}w_{1}+w_{0}=d, \ \ a_{2}w_{2}+w_{1}=d, \ \ a_{3}w_{3}+w_{2}=d, \ \ a_{4}w_{4}+w_{3}=d.
\end{equation}
Moreover, if the link $L_{f}$ is a rational homology sphere, from the equality (2.1.1) $d=1+a_{0}a_{1}a_{2}a_{3}a_{4}$, we can express each weight $w_{i}$ as:
\begin{equation}
    w_{i}=1-a_{i+1}+a_{i+1}a_{i+2}-a_{i+1}a_{i+2}a_{i+3}+a_{i+1}a_{i+2}a_{i+3}a_{i+4},
\end{equation}
where the subscript is taken mod $5$. Furthermore, considering the relations given in (3.1.2) and the fact that $\gcd(d,w_{i})=1$, we conclude that two consecutive weights  $w_{i}$ and $w_{i+1}$ are always co-prime. 

\begin{rem} Notice that the assumption (e) below Theorem 2.2.2 and the relations in (3.1.2) imply that   $a_i>1$ for $i=0, \ldots ,  4.$  This fact  will be used throughout the proof of Lemma 3.1.2. Indeed, without loosing generality,  let us assume that $a_2=1.$ Then the third equation in (3.1.2)  gives  $w_1+w_2=d$ and  the estimate $dI< \frac{4}{3}\min _{i, j}\left\{w_i w_j\right\}$ can be written as  $$(w_1+w_2)(w_0+w_3+w_4)<\frac{4}{3}\min _{i, j}\left\{w_i w_j\right\}.$$ Since 
$6\min _{i, j}\left\{w_i w_j\right\}\leq (w_0+w_3+w_4)(w_1+w_2)$ we obtain a contradiction.
\end{rem}

Now, we find the generators of the space of deformations of the orbifold $Z_f$. In  \cite{BGK}, it was proven  that the automorphism group of any orbifold  $Z_f$ is finite as long as $w_i< \frac{1}{2}d$ for all but one of the $w_i's$ for $f$ quasismooth.  Since $\gcd(d,w_{i})=1$, the weight vector $\bf w$ does not admit polynomials that contain blocks of the form $z_{i}^{2}+z_{j}^{2}$, thus   
$H^0(Z_f, \Theta_{Z_f})=0.$ So the complex dimension of the moduli of $Z_f$ is given by 
\begin{equation}
    \dim_{\mathbb C} H^{0}(\mathbb{P}(\textbf{w}),\mathcal{O}({d}))-\sum_{i}\dim_{\mathbb C} H^{0}(\mathbb{P}(\mathbf{w}),\mathcal{O}(w_{i})). 
\end{equation}

Let us compute $\dim_{\mathbb C} H^{0}(\mathbb{P}(\textbf{w}),\mathcal{O}({d}))$. Here we consider all possible monomials $z_{0}^{x_{0}}z_{1}^{x_{1}}z_{2}^{x_{2}}z_{3}^{x_{3}}z_{4}^{x_{4}}$ of degree $d$. Notice that it is equivalent to solving the following Diophantine equation
\begin{equation}  w_{0}x_{0}+w_{1}x_{1}+w_{2}x_{2}+w_{3}x_{3}+w_{4}x_{4}=d
\end{equation}
with variables $x_{i}\in\mathbb{Z}_{0}^{+}$ and where at least one of them is nonzero. 

First recall a well-known result for Diophantine equations that will be used in the arguments that follow.
\begin{lem}
 We consider the Diophantine equation in variables $x,y$:
    $$ax+by=c$$
If $\gcd(a,b)=1$ and $(x_{0},y_{0})$ is a particular solution, then all the solutions of the Diophantine equation are given by
$$(x,y)=(x_{0}-bk,y_{0}+ak), \ \ \mbox{ where } k\in\mathbb{Z}.$$   
\end{lem}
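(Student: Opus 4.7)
The plan is the standard two-direction argument for a linear Diophantine equation with coprime coefficients. First I would verify that the proposed family consists of solutions: for any $k \in \ZZ$, a direct computation gives
$$a(x_{0}-bk) + b(y_{0}+ak) = (ax_{0}+by_{0}) - abk + abk = c,$$
so $(x_{0}-bk,\, y_{0}+ak)$ does solve $ax+by=c$. This direction is immediate and carries no obstruction.

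For the converse, I would take an arbitrary integer solution $(x,y)$ and subtract the identity $ax_{0}+by_{0}=c$ from $ax+by=c$ to obtain
$$a(x-x_{0}) = -b(y-y_{0}) = b(y_{0}-y).$$
Thus $b$ divides $a(x-x_{0})$, and since $\gcd(a,b)=1$, Euclid's lemma forces $b \mid (x-x_{0})$. Writing $x-x_{0} = -bk$ for some $k \in \ZZ$ (the sign chosen to match the form in the statement) and substituting back, I obtain $-abk = b(y_{0}-y)$. Cancelling $b$ (which is nonzero; otherwise $\gcd(a,b)=1$ forces $a = \pm 1$ and the statement reduces to a trivial one-variable parametrization) yields $y_{0}-y = -ak$, i.e.\ $y = y_{0}+ak$.

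Taken together, these two directions show that the integer solution set of $ax+by=c$ coincides exactly with $\{(x_{0}-bk,\, y_{0}+ak) : k \in \ZZ\}$. The argument is entirely classical and I do not anticipate any genuine obstacle; the only step demanding a small amount of care is the invocation of Euclid's lemma (which crucially uses the coprimality hypothesis $\gcd(a,b)=1$) and the bookkeeping of signs so that the parametrization appears in precisely the form stated in the lemma.
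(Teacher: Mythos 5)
Your proof is correct and is the standard argument; the paper itself states this lemma without proof, merely recalling it as a well-known fact, and the classical two-direction argument you give (direct verification plus Euclid's lemma via $\gcd(a,b)=1$) is exactly what is implicitly being invoked. No gaps.
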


The next lemma determines all solutions of Equation (3.1.5).
\begin{lem}
    Let $\bf w$ be defined as in (3.1.2). Then the Diophantine equation (3.1.5) has exactly five solutions. These solutions are $$(a_{0},0,0,0,1), (1,a_{1},0,0,0), (0,1,a_{2},0,0), (0,0,1,a_{3},0) \mbox{ and } (0,0,0,1,a_{4}).$$
    Thus, the set of generators of the space   
$H^{0}(\mathbb{P}(\textbf{w}),\mathcal{O}(d))$  is  given by $$\{z_0^{a_0}z_4, z_0z_1^{a_1}, z_1z_2^{a_2}, z_2z_3^{a_3}, z_3z_4^{a_4}\}.$$

    \begin{proof}
        Since the polynomial $f$ associated to $\bf w$ is cycle, we can assume, without loss of generality, that $w_{4}=\min_{i}w_{i}$.
        
        Now, in Equation (3.1.5), as $\gcd(w_{3},w_{4})=1$, we can define a new variable $y_{3}=w_{3}x_{3}+w_{4}x_{4}\geq0$. Replacing this in Equation 
        (3.1.5), we obtain a new Diophantine equation
        \begin{equation*}
        w_{0}x_{0}+w_{1}x_{1}+w_{2}x_{2}+y_{3}=d.   
        \end{equation*}
        Here, we define the variable $y_{2}=w_{2}x_{2}+y_{3}\geq0$. Putting this in the equation above, we get
        \begin{equation*}
         w_{0}x_{0}+w_{1}x_{1}+y_{2}=d.    
        \end{equation*}
        Repeating the previous process, we define the new variable $y_{1}=w_{1}x_{1}+y_{2}\geq0$. Again, replacing this above, we obtain
        \begin{equation}
         w_{0}x_{0}+y_{1}=d.  
        \end{equation}
        From (3.1.2), we obtain a particular solution $(a_{0},w_{4})$ of Equation (3.1.6). Thus, the general solution is given by
        $$x_{0}=a_{0}-t \ \ \mbox{ and } \ \ y_{1}=w_{4}+tw_{0}, \ \ \mbox{ where }t\in\mathbb{Z}.$$
        
        Now, we solve the Diophantine equation 
        \begin{equation}
          w_{1}x_{1}+y_{2}=y_{1}.  
        \end{equation}
        Since  $y_{1}=w_{4}+tw_{0}$ and $w_{0}=d-a_{1}w_{1}$, we obtain  
        $$w_{1}x_{1}+y_{2}=y_{1}=w_{4}+t(d-a_{1}w_{1})=-ta_{1}w_{1}+w_{4}+td.$$
        Then we have a particular solution $(-ta_{1},w_{4}+td)$. Therefore, the general solution of (3.1.7) is given by
        $$x_{1}=-ta_{1}-s \ \ \mbox{ and } \ \ y_{2}=w_{4}+td+sw_{1}, \ \ \mbox{where }t,s\in\mathbb{Z}$$
        
        On the other hand, to solve the Diophantine equation
        \begin{equation}
            w_{2}x_{2}+y_{3}=y_{2}
        \end{equation}
        we take into account that $y_{2}=w_{4}+td+sw_{1}$ and $w_{1}=d-a_{2}w_{2}$. Thus, we obtain
        $$w_{2}x_{2}+y_{3}=y_{2}=w_{4}+td+sw_{1}=-sa_{2}w_{2}+w_{4}+(t+s)d$$
        Then, a particular solution of equation above is $(-sa_{2},w_{4}+(t+s)d).$ So the general equation is
        $$x_{2}=-sa_{2}-r \ \ \mbox{ and } \ \ y_{3}=w_{4}+(t+s)d + rw_{2} \ \ \mbox{ where }r,s,t\in\mathbb{Z}$$
        
        Finally, we will solve the Diophantine equation
        \begin{equation}
            w_{3}x_{3}+w_{4}x_{4}=y_{3}
        \end{equation}
        Since $y_{3}=w_{4}+(t+s)d+rw_{2}$ and $w_{2}=d-a_{3}w_{3}$, we have
        $$w_{3}x_{3}+w_{4}x_{4}=y_{3}=w_{4}+(t+s)d+rw_{2}=(t+s+r-ra_{3})w_{3} + (1+a_{4}(t+s+r))w_{4}$$
        This implies that a particular solution of (3.1.9) is $(t+s+r-ra_{3},1+a_{4}(t+s+r))$. Then, the general solution is given by
        $$x_{3}=t+s+r-ra_{3}-qw_{4} \ \ \mbox{ and } \ \ x_{4}=1+(t+s+r)a_{4}+qw_{3} \ \mbox{ where }q,r,s,t\in\mathbb{Z}$$
        
        Considering all of the above, we have the general solution for (3.1.5):
        {\small{
        \begin{equation}
           (x_{0},x_{1},x_{2},x_{3},x_{4})=(a_{0}-t, -ta_{1}-s, -sa_{2}-r, t+s+r-ra_{3}-qw_{4},1+(t+s+r)a_{4}+qw_{3}) 
        \end{equation}}}
        where $q,r,s,t\in\mathbb{Z}$. Notice that in order to solve (3.1.5), we require $x_{i}\geq0$ for all $i$. This implies that
        \begin{equation}
            t\leq a_{0}, \ \ \  s\leq -ta_{1}, \ \ \ r\leq -sa_{2}, \ \ \ ra_{3}+qw_{4} \leq t+s+r \ \ \mbox{ and } \ \ -qw_{3}-1\leq (t+s+r)a_{4}.
        \end{equation}
        
        On the other hand, for the variables $y_{i}$'s, we have 
        \begin{equation}
            (y_{1},y_{2},y_{3})=(w_{4}+tw_{0}, w_{4}+td+sw_{1}, w_{4}+td+sd+rw_{2}), \ \ \mbox{ where }r,s,t\in\mathbb{Z}
        \end{equation}
         As $x_{i}\geq0$, then we have $y_{i}\geq0$. So we obtain  the following inequalities
         \begin{equation}
             tw_{0}+w_{4}\geq 0, \ \ \ w_{4}+td+sw_{1}\geq0 \ \ \mbox{ and } \ \ w_{4}+td+sd+rw_{2}\geq 0.
         \end{equation}

        \underline{\textbf{Claim 1}}: $q\in\{-1,0\}$. First, we prove that $q\leq0$. Indeed, from (3.1.11) we have 
        \begin{align*}
            x_{3}=t+s+r-ra_{3}- qw_{4} \geq 0 & \Rightarrow t(1-a_{3})+s(1-a_{3})+r(1-a_{3}) \geq qw_{4}-ta_{3}-sa_{3} \\
            & \Rightarrow t+s+r \leq \dfrac{ta_{3}+sa_{3}-qw_{4}}{a_{3}-1}. 
        \end{align*}
        Also, from (3.1.11), we obtain
        $$ t+s+r \geq \dfrac{-qw_{3}-1}{a_{4}}.$$
        From the two inequalities above, we get
        $$\dfrac{-qw_{3}-1}{a_{4}}\leq \dfrac{ta_{3}+sa_{3}-qw_{4}}{a_{3}-1} \Rightarrow -qa_{3}w_{3}-a_{3}+qw_{3} +1 \leq ta_{3}a_{4}+sa_{3}a_{4}-qa_{4}w_{4}.$$
        Using (3.1.2), we have $a_{3}w_{3}=d-w_{2}$ and $a_{4}w_{4}=d-w_{3}$. Replacing these in the previous inequality  and simplifying, we obtain
        \begin{equation}
           qw_{2}\leq a_{3}-1+ta_{3}a_{4}+sa_{3}a_{4}.
        \end{equation}
     Considering the equality (3.1.3) for $w_{2}$, we have
    \begin{align*}
        qw_{2} & \leq a_{3} -1+a_{0}a_{3}a_{4}-ta_{1}a_{3}a_{4} \\ 
        & = -1 +a_{3} -a_{3}a_{4} +a_{0}a_{3}a_{4}-a_{0}a_{1}a_{3}a_{4} +(a_{0}-t)a_{1}a_{3}a_{4} +a_{3}a_{4} \\
        & = -w_{2} +(a_{0}-t)a_{1}a_{3}a_{4} +a_{3}a_{4}.
    \end{align*}
    On the other hand, since $\gcd(w_{0},w_{4})=1$ and $w_{4}$ is the minimum of all $w_{i}$'s, we have $w_{4}<w_{0}$. Moreover, using (3.1.13), we obtain $t\geq -\dfrac{w_{4}}{w_{0}}> -1$. Thus, we have $t\geq0$.  Thus 
    $$qw_{2}\leq -w_{2}+(a_{0}-t)a_{1}a_{3}a_{4} +a_{3}a_{4}\leq -w_{2}+a_{0}a_{1}a_{3}a_{4}+a_{3}a_{4}$$ which implies 
    \begin{equation}
        q\leq -1 +\dfrac{a_{0}a_{1}a_{3}a_{4}+a_{3}a_{4}}{w_{2}}.
    \end{equation}
    Now, we will show that $2w_{2}>a_{0}a_{1}a_{3}a_{4}+a_{3}a_{4}$. Indeed, we have 
    \begin{align*}
     2w_{2}>a_{0}a_{1}a_{3}a_{4}+a_{3}a_{4}  & \Longleftrightarrow 2(1-a_{3}+a_{3}a_{4}-a_{0}a_{3}a_{4}+a_{0}a_{1}a_{3}a_{4}) > a_{0}a_{1}a_{3}a_{4}+a_{3}a_{4} \\
     & \Longleftrightarrow a_{0}a_{3}a_{4}(a_{1}-2)+a_{3}(a_{4}-2)+2>0.
    \end{align*}
    From Inequality  (3.1.15) we have   
    $$q\leq -1 +\dfrac{a_{0}a_{1}a_{3}a_{4}+a_{3}a_{4}}{w_{2}} <-1+2 = 1.$$
    Therefore, we have $q\leq 0$.

    Next, we will show that $q\geq -1$. First, we will prove that $d<2a_{1}w_{1}$. We remember that $d=1+a_{0}a_{1}a_{2}a_{3}a_{4}$ and using (3.1.3) we write $w_{1}=1-a_{2}+a_{2}a_{3}-a_{2}a_{3}a_{4}+a_{0}a_{2}a_{3}a_{4}$. Then
    \begin{align*}
        d < 2a_{1}w_{1} & \Longleftrightarrow 1+a_{0}a_{1}a_{2}a_{3}a_{4} < 2a_{1}(1-a_{2}+a_{2}a_{3}-a_{2}a_{3}a_{4}+a_{0}a_{2}a_{3}a_{4}) \\
        & \Longleftrightarrow 1 < 2a_{1} -2a_{1}a_{2}+2a_{1}a_{2}a_{3}-2a_{1}a_{2}a_{3}a_{4}+a_{0}a_{1}a_{2}a_{3}a_{4} \\
        & \Longleftrightarrow 1< 2a_{1}+2a_{1}a_{2}(a_{3}-1) +a_{1}a_{2}a_{3}a_{4}(a_{0}-2).
    \end{align*}
Since $a_{3}\geq2$ and $a_{0}\geq2$, we conclude that $d<2a_{1}w_{1}$. 
Now, from (3.1.11) and (3.1.13), we have
$$sa_{2}w_{2}\leq -rw_{2} \leq w_{4}+td+sd.$$
Replacing $a_{2}w_{2}=d-w_{1}$ in the above inequality, we obtain $-sw_{1}\leq w_{4}+td $. Moreover, as $d<2a_{1}w_{1}$ and $w_{4}=\min_{i}w_{i}  \leq w_{1}$, we get
$$-sw_{1}\leq w_{1}+td < w_{1}+2ta_{1}w_{1} \Rightarrow -s < 1+2ta_{1} \Rightarrow -s\leq 2ta_{1}.$$
 By (3.1.11), $a_{0}\geq t$, then we have
    \begin{equation}
        -s \leq 2ta_{1} \leq 2a_{0}a_{1}.
    \end{equation}
Also, from (3.1.11) we have $1+(t+s+r)a_{4}+qw_{3}\geq0$, $-sa_{2}\geq r$ and $t\leq a_{0}$. Then
$$-qw_{3}\leq 1+a_{4}(t+s+r) \leq 1+a_{4}(t-s(a_{2}-1))\leq 1+a_{4}(a_{0}-s(a_{2}-1)).$$
Replacing (3.1.16) in the above inequality, we obtain
$$-qw_{3} \leq 1+a_{4}(a_{0}+2a_{0}a_{1}(a_{2}-1)) = 1+a_{0}a_{4}-2a_{0}a_{1}a_{4}+2a_{0}a_{1}a_{2}a_{4}.$$
Adding $1+a_{0}a_{4}-2a_{4} =1+a_{4}(a_{0}-2)>0$ to the right of the last inequality, we have 
$$-qw_{3}<2(1-a_{4}+a_{4}a_{0}-a_{4}a_{0}a_{1}+a_{4}a_{0}a_{1}a_{2})=2w_{3} \Rightarrow q>-2 \Rightarrow q\geq -1. $$
So  $q\in \{ -1,0\}$.

Next, we will determine the number of solutions for the two values of $q$. 
\begin{itemize}
    \item[a)] If $q=0$, we have the unique solution $(a_{0},0,0,0,1).$ 
    Indeed from (3.1.11) we have 
    \begin{equation}
        t\leq a_{0}, \ \ \  s\leq -ta_{1}, \ \ \ r\leq -sa_{2}, \ \ \ ra_{3} \leq t+s+r \ \ \mbox{ and } \ \ -1\leq (t+s+r)a_{4}.
    \end{equation}
    Since $t+s+r-ra_{3}\geq 0$, we obtain $(t+s+r)(1-a_{3})\geq -sa_{3}-ta_{3}$ which implies 
    \begin{equation}
        t+s+r\leq \dfrac{a_{3}(s+t)}{a_{3}-1}.
    \end{equation}
    On the other hand, from the last inequality in (3.1.17), we have $t+s+r\geq -\dfrac{1}{a_{4}}>-1$ which means  $t+s+r\geq 0$. So from Inequality  (3.1.18) we have $s+t\geq0$.
  
 Now from $\gcd(w_{4},w_{0})=1$ and $w_{4}=\min_{i}w_{i}$, we have $w_{4}<w_{0}$. Then $-1<-\dfrac{w_{4}}{w_{0}}\leq t \leq a_{0}$. So we obtain $0\leq t \leq a_{0}$. In addition, from (3.1.17) we have $s\leq -ta_{1}\leq0$. As $-t\leq s$, we have 
    $$s\leq -ta_{1} \leq sa_{1} \Rightarrow 0\leq s(a_{1}-1) \Rightarrow s\geq0.$$
    Since $s\leq0,$ we obtain $s=0$ so  $t=0$. Finally, from (3.1.17) we have $r\leq -sa_{2}=0$ and since $t=s=0$, we have $r=t+s+r\geq0$. Thus,  $r=0$. Then, we have a solution for (3.1.5): $(a_{0},0,0,0,1)$.
    
    \item[b)] If $q=-1$, we have to analyze for cases: In  (3.1.11) we have 
    \begin{equation}
            t\leq a_{0}, \ \ \  s\leq -ta_{1}, \ \ \ r\leq -sa_{2}, \ \ \ ra_{3}-w_{4} \leq t+s+r \ \ \mbox{ and } \ \ w_{3}-1\leq (t+s+r)a_{4}.
        \end{equation}
     \underline{\textbf{Claim 2}}: $t\in\{a_{0}-1,a_{0}\}$. Indeed, from (3.1.3) we write $w_{3}=1-a_{4}+a_{4}a_{0}-a_{4}a_{0}a_{1}+a_{4}a_{0}a_{1}a_{2}$. Replacing this in the last inequality in (3.1.19) we obtain
    \begin{equation}
        t+s+r\geq \dfrac{w_{3}-1}{a_{4}} = \dfrac{-a_{4}+a_{4}a_{0}-a_{4}a_{0}a_{1}+a_{4}a_{0}a_{1}a_{2}}{a_{4}} = -1+a_{0}-a_{0}a_{1}+a_{0}a_{1}a_{2}.
    \end{equation}
    From (3.1.19), using $a_{0}-t\geq0$ and $-sa_{2}\geq r$  in (3.1.20), we obtain 
    \begin{equation}
      -sa_{2}+s\geq r+s \geq -1+a_{0}-t-a_{0}a_{1}+a_{0}a_{1}a_{2} \geq -1-a_{0}a_{1}+a_{0}a_{1}a_{2}.
    \end{equation}
    Since  $1-a_{2}<0$, then
    $$s\leq\dfrac{ -1-a_{0}a_{1}(1-a_{2})}{1-a_{2}} = \dfrac{1}{a_{2}-1}-a_{0}a_{1}\leq 1-a_{0}a_{1}.$$
   Actually $s<1-a_{0}a_{1}.$ Indeed 
   if   $s=1-a_{0}a_{1}$ (which would force  $a_{2}=2$) from  (3.1.21) we can write 
    $$-s\geq r+s\geq -1+a_{0}-t+a_{0}a_{1}\geq -1+a_{0}a_{1}= -s.$$
    In this inequality, we have that $t=a_{0}$. Thus, in the second inequality of (3.1.19) we obtain $s\leq -a_{0}a_{1}$, which contradicts the assumption $s=1-a_{0}a_{1}$. So we have $s<1-a_{0}a_{1}$ so we say 
    \begin{equation}
        s\leq -a_{0}a_{1}.
    \end{equation}
    Replacing (3.1.22) in (3.1.20), we obtain
    \begin{equation}
        r \geq -1+(a_{0}-t)+(-s-a_{0}a_{1})+a_{0}a_{1}a_{2}\geq -1+a_{0}a_{1}a_{2}.
    \end{equation}
    On the other hand, from (3.1.13)  and (3.1.22) we obtain 
    $$w_{4}+td\geq -sw_{1} \geq a_{0}a_{1}w_{1}.$$
    Since $a_{1}w_{1}=d-w_{0}$ and $w_{4}=d-a_{0}w_{0}$ the previous inequality can be rewritten as 
    $$a_{0}(d-w_{0}) \leq d-a_{0}w_{0}+td \Rightarrow a_{0}-1\leq t.$$
    Also, from (3.1.19) we have  $t\leq a_{0}$. Thus  $t\in\{ a_{0}-1,a_{0}\}$. 
    \medskip
    
    Next, we will determine the solutions of the Diophantine equation (3.1.5) for each value of $t$.
    \begin{itemize}
        \item[i)]  If $t=a_{0}-1$, from (3.1.13) we obtain  
        $$w_{4}+(a_{0}-1)d+sd+rw_{2}\geq0 \Rightarrow w_{4}-d+a_{0}d+sd\geq -rw_{2}.$$
        From (3.1.19) we have $r\leq -sa_{2}$. In addition, the weights verify $d=w_{4}+a_{0}w_{0}=w_{1}+a_{2}w_{2}$. Thus,  the inequality above can be written 
        $$-a_{0}w_{0}+a_{0}d+sd\geq -rw_{2}\geq sa_{2}w_{2} = sd-sw_{1}.$$ 
        As $a_{1}w_{1}=d-w_{0}$, then $-a_{0}a_{1}\leq s$. Also, by (3.1.22) we know that $s\leq -a_{0}a_{1}$. Hence  $s=-a_{0}a_{1}$. On the other hand, replacing $t=a_{0}-1$ and $s=-a_{0}a_{1}$ in (3.1.20), we obtain $r\geq a_{0}a_{1}a_{2}$.  
        From (3.1.19) we have
        $$ra_{3}-w_{4}\leq t+s+r \Rightarrow r \leq \dfrac{w_{4}+t+s}{a_{3}-1}= \dfrac{w_{4}+(a_{0}-1)+(-a_{0}a_{1})}{a_{3}-1}.$$
        As $w_{4}=1-a_{0}+a_{0}a_{1}-a_{0}a_{1}a_{2}+a_{0}a_{1}a_{2}a_{3}$, we obtain $r\leq a_{0}a_{1}a_{2}$. Thus, $r=a_{0}a_{1}a_{2}$. In this case, from (3.1.10) it follows that a solution for (3.1.5) is $(1,a_{1},0,0,0)$.
        \item[ii)] If $t=a_{0}$. Following a similar process that in i) and using (3.1.19) and (3.1.22) we have
        \begin{equation}
            r\leq \dfrac{t+s+w_{4}}{a_{3}-1} = \dfrac{a_{0}+s+w_{4}}{a_{3}-1}\leq \dfrac{a_{0}-a_{0}a_{1}+w_{4}}{a_{3}-1}.
        \end{equation}
         Moreover, since $w_{4}=1-a_{0}+a_{0}a_{1}-a_{0}a_{1}a_{2}+a_{0}a_{1}a_{2}a_{3}$, we obtain
        $$r\leq \dfrac{a_{0}-a_{0}a_{1}+w_{4}}{a_{3}-1}\leq \dfrac{1-a_{0}a_{1}a_{2}+a_{0}a_{1}a_{2}a_{3}}{a_{3}-1}= \dfrac{1}{a_{3}-1}+a_{0}a_{1}a_{2}.$$
        When $a_{3}=2$, it is possible that $r=1+a_{0}a_{1}a_{2}$. Let us see that this situation cannot happen. Indeed, if we assume that $r=1+a_{0}a_{1}a_{2}$, then $a_{3}=2$. It implies that $w_{4}=1-a_{0}+a_{0}a_{1}+a_{0}a_{1}a_{2}$. Replacing in (3.1.24), we obtain
        $$r\leq 1+a_{0}a_{1}+s+a_{0}a_{1}a_{2}\leq 1+a_{0}a_{1}a_{2} =r.$$
        Thus, we get $s=-a_{0}a_{1}$. Putting this in the third inequality of (3.1.19), we have $r\leq a_{0}a_{1}a_{2}$, which results in a contradiction. Therefore, we have $r\leq a_{0}a_{1}a_{2}$. Also, from (3.1.23), it verifies  $r\geq -1+a_{0}a_{1}a_{2}$. Hence, we have $r\in\{ -1+a_{0}a_{1}a_{2},a_{0}a_{1}a_{2}\}$. Next, we detail each case.
        \begin{itemize}
            \item If $r=-1+a_{0}a_{1}a_{2}$, we have in the last inequality of (3.1.19):
            $$a_{0}+s-1+a_{0}a_{1}a_{2} = t+s+r\geq \dfrac{w_{3}-1}{a_{4}}=-1+a_{0}-a_{0}a_{1}+a_{0}a_{1}a_{2}, $$
            which implies that $s\geq -a_{0}a_{1}$. Using (3.1.22), we have $s=-a_{0}a_{1}$. Thus, in this case  from (3.1.10) it follows that the solution of (3.1.5) is $(0,0,1,a_{3},0)$. 
            
            \item If $r=a_{0}a_{1}a_{2}$, in a similar way as we have worked above, in the last inequality of (3.1.19) we have 
             $$a_{0}+s+a_{0}a_{1}a_{2} = t+s+r\geq \dfrac{w_{3}-1}{a_{4}}=-1+a_{0}-a_{0}a_{1}+a_{0}a_{1}a_{2}, $$
            which implies that $s\geq -1-a_{0}a_{1}$. From (3.1.22), $s\leq -a_{0}a_{1}$, we obtain $s=-1-a_{0}a_{1}$ or $s=-a_{0}a_{1}$. If $s=-1-a_{0}a_{1}$, the solution for (3.1.5) is $(0,1,a_{2},0,0)$. On the other hand, if $s=-a_{0}a_{1}$,  from (3.1.10) it follows  that  $(0,0,0,1,a_{4})$ is the solution for the equation (3.1.5).
        \end{itemize}
    \end{itemize}
\end{itemize}
Therefore, the equation (3.1.5) has exactly five solutions:
$$(a_{0},0,0,0,1), (1,a_{1},0,0,0), (0,1,a_{2},0,0), (0,0,1,a_{3},0) \mbox{ and } (0,0,0,1,a_{4}).$$
    \end{proof}
\end{lem}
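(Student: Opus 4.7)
The plan is to verify that the five listed tuples are solutions and then to show they exhaust the solution set via a reduction-and-parametrization argument driven by the coprimality of consecutive weights. For the first part, the relations (3.1.2) directly give $a_0 w_0 + w_4 = d$, $w_0 + a_1 w_1 = d$, \ldots, $w_3 + a_4 w_4 = d$, so each of the five tuples $(a_0,0,0,0,1)$, $(1,a_1,0,0,0)$, $(0,1,a_2,0,0)$, $(0,0,1,a_3,0)$, $(0,0,0,1,a_4)$ is immediately a solution of (3.1.5). The work is in showing there are no others.

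For uniqueness, I would exploit the cyclic symmetry of $f$ to assume without loss of generality that $w_4=\min_i w_i$. The key structural fact is that (3.1.2) together with $\gcd(d,w_i)=1$ forces $\gcd(w_i,w_{i+1})=1$ for every pair of consecutive weights. This allows a chain of substitutions: setting $y_3=w_3x_3+w_4x_4$, $y_2=w_2x_2+y_3$, $y_1=w_1x_1+y_2$ successively reduces (3.1.5) to $w_0x_0+y_1=d$, and each resulting two-variable Diophantine equation $w_ix_i+y_{i+1}=y_i$ can be solved using Lemma 3.1.1 with the particular solution read off from (3.1.2). This produces an explicit parametrization of the full integer solution set by four integer parameters $t,s,r,q$, as displayed in (3.1.10).

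The main obstacle is to extract, from the non-negativity conditions $x_i\geq 0$ and $y_i\geq 0$, enough bounds to pin $(t,s,r,q)$ down to a finite list. I would proceed in two stages. First, I show $q\in\{-1,0\}$: the lower bound $q\geq -1$ follows by combining $x_3\geq 0$ and $x_4\geq 0$, substituting $a_3w_3=d-w_2$, $a_4w_4=d-w_3$, and using the elementary inequality $d<2a_1w_1$ (which in turn reduces to $a_1a_2a_3a_4(a_0-2)+2a_1a_2(a_3-1)+2a_1-1>0$ via $d=1+a_0a_1a_2a_3a_4$ and (3.1.3)); the upper bound $q\leq 0$ follows from a parallel estimate showing $2w_2>a_0a_1a_3a_4+a_3a_4$. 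Second, for each value of $q$ I would run a short case analysis: $q=0$ forces $t=s=r=0$ and yields $(a_0,0,0,0,1)$; $q=-1$ first forces $t\in\{a_0-1,a_0\}$ via the inequality $s\leq -a_0a_1$, and then each subcase (with a further split $r\in\{a_0a_1a_2-1,a_0a_1a_2\}$ when $t=a_0$) delivers exactly one of the remaining four tuples.

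Throughout, the proof is driven by the Milnor-number identity $d=1+a_0a_1a_2a_3a_4$ which makes the weights large enough that the positivity constraints cut the four-parameter family down to just five points. Once these five solutions are established, the set of generators of $H^0(\mathbb{P}(\mathbf{w}),\mathcal{O}(d))$ coincides with the set of monomials appearing in $f$, which is exactly the stated conclusion.
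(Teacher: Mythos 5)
Your proposal follows essentially the same route as the paper's own proof: the same chain of substitutions $y_3,y_2,y_1$ reducing to $w_0x_0+y_1=d$, the same four-parameter family $(t,s,r,q)$, the same key inequalities $2w_2>a_0a_1a_3a_4+a_3a_4$ and $d<2a_1w_1$ to pin $q\in\{-1,0\}$, and the same case analysis on $q$, $t$, and $r$ yielding the five solutions. The outline is correct; it is a faithful (if condensed) version of the argument in the paper.
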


Next, we compute the generators of  $H^{0}(\mathbb{P}(\mathbf{w}),\mathcal{O}(w_{i}))$. For this, we need to find all monomials $z_{0}^{a_{0}}z_{1}^{a_{1}}z_{2}^{a_{2}}z_{3}^{a_{3}}z_{4}^{a_{4}}$ of degree $w_i$. It is equivalent to solving the Diophantine equation
\begin{equation}  w_{0}x_{0}+w_{1}x_{1}+w_{2}x_{2}+w_{3}x_{3}+w_{4}x_{4}=w_{i}
\end{equation}
with variables  $x_{i}\in\mathbb{Z}_{0}^{+}$ and where at least one of them is nonzero.
\begin{lem}
    The equation (3.1.25) has a unique solution for each $w_{i}$ and the set of generators of  $H^{0}(\mathbb{P}(\mathbf{w}),\mathcal{O}(w_{i}))$ is given by  $\{z_i\}$ for $i=0\ldots , 4.$
    \begin{proof}
      Since $f$ is a cycle polynomial, we can work without loss of generality with $w_{i}=w_{1}$. Thus, the equation (3.1.25) results  in  
      \begin{equation}  w_{0}x_{0}+w_{1}x_{1}+w_{2}x_{2}+w_{3}x_{3}+w_{4}x_{4}=w_{1}.
\end{equation}
From (3.1.2), we have $a_{2}w_{2}+w_{1}=d$. Then, if we add $a_{2}w_{2}$ to both sides of (3.1.26), we arrive to a new Diophantine equation
\begin{equation}  w_{0}x_{0}+w_{1}x_{1}+w_{2}\tilde{x}_{2}+w_{3}x_{3}+w_{4}x_{4}=d, 
\end{equation}
where the new variable $\tilde{x}_{2}=x_{2}+a_{2}\geq a_{2}$. Now, we remember that Equation (3.1.27) has five solutions, which were obtained in the lemma above:
$$(a_{0},0,0,0,1), (1,a_{1},0,0,0), (0,1,a_{2},0,0), (0,0,1,a_{3},0) \mbox{ and } (0,0,0,1,a_{4}).$$
Since $\tilde{x}_{2}\geq a_{2}$, we have a unique option that solve (3.1.27): $(0,1,a_{2},0,0)$. As a consequence, the equation (3.1.26) has a unique solution. Thus returning to Equation (3.1.26), after subtracting the vector  
$(0,0, a_2, 0, 0)$   
we obtain the solution $(0,1,0,0,0)$ that gives as generator the monomial $z_1.$ Similarly we obtain the other solutions which are exactly the solutions given by Equation (3.1.27). It follows that the generator for 
 $H^{0}(\mathbb{P}(\mathbf{w}),w_{i})$ is  given by the monomial  $z_i$ for $i=0\ldots 4.$ 
\end{proof}
\end{lem}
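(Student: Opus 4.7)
The strategy is to reduce the Diophantine equation for each weight $w_i$ to the equation of degree $d$ already solved in Lemma 3.1.2, exploiting the cycle relations $a_{i+1}w_{i+1}+w_i = d$ from (3.1.2). Since $f$ is of cycle type, the symmetry of the indices (mod $5$) allows me to work without loss of generality with one specific weight, say $w_1$; all other cases follow by an identical argument after cyclically relabeling the variables.

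First I would write down the equation
\begin{equation*}
w_0 x_0 + w_1 x_1 + w_2 x_2 + w_3 x_3 + w_4 x_4 = w_1,
\end{equation*}
and add $a_2 w_2$ to both sides. Using $a_2 w_2 + w_1 = d$, the right-hand side becomes $d$, while the left-hand side becomes the analogous expression with $x_2$ replaced by $\tilde{x}_2 := x_2 + a_2$. Thus nonnegative solutions of the original equation correspond bijectively with nonnegative integer solutions of the degree $d$ equation (3.1.5) satisfying the extra constraint $\tilde{x}_2 \geq a_2$.

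Next I invoke Lemma 3.1.2, which says that (3.1.5) has exactly the five solutions
\begin{equation*}
(a_0,0,0,0,1),\ (1,a_1,0,0,0),\ (0,1,a_2,0,0),\ (0,0,1,a_3,0),\ (0,0,0,1,a_4).
\end{equation*}
Reading off the third coordinate of each, only $(0,1,a_2,0,0)$ satisfies $\tilde{x}_2 \geq a_2$. Subtracting $(0,0,a_2,0,0)$ yields the unique solution $(0,1,0,0,0)$ of the original equation, which corresponds to the monomial $z_1$. By the cyclic symmetry of $f$ the same argument applied to index $i$ yields the unique solution $e_i$ and the generator $z_i$, completing the proof.

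The argument is essentially a bookkeeping reduction; the only step that requires mild care is verifying that the shift $\tilde{x}_2 = x_2+a_2$ really does select exactly one of the five solutions from Lemma 3.1.2 and that subtracting back gives an admissible (nonnegative, nonzero) solution of the original equation. Since no other step involves new Diophantine analysis, I do not expect any serious obstacle beyond stating the reduction cleanly and appealing to the cyclic symmetry to handle the remaining four weights.
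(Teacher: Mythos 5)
Your proposal is correct and follows essentially the same route as the paper: reduce the degree-$w_1$ equation to the degree-$d$ equation via the substitution $\tilde{x}_2 = x_2 + a_2$ using $a_2 w_2 + w_1 = d$, select the unique solution $(0,1,a_2,0,0)$ from Lemma 3.1.2 satisfying $\tilde{x}_2 \geq a_2$, and conclude by cyclic symmetry. No gaps.
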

\medskip

Following \cite{BGN1}, we collect the information given by the previous lemmas

\begin{prop} Let us choose the  definition of the weighted projective space $\mathbb{P}(\mathbf{w})$  as a scheme $\operatorname{Proj}(S(\mathbf{w}))$, where
$$
S(\mathbf{w})=\bigoplus_d S^d(\mathbf{w})=\mathbb{C}\left[z_0, z_1, z_2, z_3, z_4\right].
$$
The ring of polynomials $\mathbb{C}\left[z_0, z_1, z_2, z_3, z_4\right]$ is graded with grading defined by the weights $\mathbf{w}=\left(w_1, w_1, w_2, w_3, w_4 \right)$.  From Lemma 3.1.3, the group $G(\mathbf{w})$ of automorphisms of the graded ring $S(\mathbf{w})$ can be defined on generators by 
$$
\varphi_{\mathrm{w}}\left(\begin{array}{l}
z_0 \\
z_1 \\
z_2 \\
z_3\\
z_4
\end{array}\right)=\left(\begin{array}{l}
\alpha_0 z_0 \\
\alpha_1 z_1 \\
\alpha_2 z_2 \\
\alpha_3 z_3\\
\alpha_4 z_4
\end{array}\right)
$$
where $\alpha_i\in \mathbb C^*.$ 
The group $\mathcal{G}(\mathbf{w})$ of complex automorphisms of $\mathbb{P}(\mathbf{w})$ is the projectivization of  $G(\mathbf{w})$ which in this case is given by  $\mathcal{G}(\mathbf{w})=\left(\mathbb{C}^*\right)^4.$ Actually since the generating set of $H^{0}(\mathbb{P}(\textbf{w}),\mathcal{O}(d))$  is  given by the monomials  $z_0^{a_0}z_4, z_0z_1^{a_1}, z_1z_2^{a_2}, z_2z_3^{a_3}, z_3z_4^{a_4}.$ Thus, the moduli of the orbifold $Z_f$  is included in the space  
$$Span\{z_0^{a_0}z_4, z_0z_1^{a_1}, z_1z_2^{a_2}, z_2z_3^{a_3}, z_3z_4^{a_4}\} / G(\mathbf{w})$$ which determines a 0-dimensional quotient. 
\end{prop}
\hfill$\square$
\smallskip

From the two previous lemmas, we obtain the following outcome in the context of Sasaki-Einstein structures for rational homology 7-spheres.

\begin{prop}
    Let $f$ be a cycle polynomial as in (3.1.1) of degree $d$ with associated weight vector $\textbf{w}=(w_{0},w_{1},w_{2},w_{3},w_{4})$ such that $\gcd(d,w_{i})=1.$ Then the complex dimension  of the  moduli of the orbifold $Z_f=(f=0)/\mathbb{C}^{*}(\textbf{w})$,  is equal to $0$. Moreover,  the generators of the spaces   
$H^{0}(\mathbb{P}(\textbf{w}),\mathcal{O}(d))$  and $H^{0}(\mathbb{P}(\mathbf{w}),\mathcal{O}(w_{i}))$ for all $i=0, \ldots , 4$ are given in Proposition 3.1.1. 
    Additionally, if  $f$ belongs to one of the 236 rational homology spheres admitting Sasaki-Einstein metrics found in \cite{BGN2} and \cite{CL}, then the dimension of the local moduli of Sasaki-Einstein metrics of $\mathbb{Q}$-homology 7-spheres at $L_f$ is zero dimensional.  Thus rational homology 7-spheres given as links coming from polynomials $f$  as above do not admit inequivalent families of Sasaki-Einstein structures.
   \end{prop}
   \hfill$\square$
         
         \smallskip
    \begin{example}     
    Consider the following cycle polynomial $f=z_{4}z_{0}^2+z_{0}z_{1}^{8}+z_{1}z_{2}^{4}+z_{2}z_{3}^{30}+z_{3}z_{4}^{3}$ which can be found in the Johnson and Koll\'ar list of anticanonically embedded Fano K\"ahler-Einsten 3-folds. It follows that the corresponding weight vector is $\textbf{w}=(1945,477,1321,148,1871)$ and the degree is  $d=5761$ so $\gcd(d,w_{i})=1$. As shown in \cite{BGN1} the corresponding link $L_f$ is a Sasaki-Einstein rational homology 7-sphere. By Proposition 3.1.1, the generating set of $H^{0}(\mathbb{P}(\textbf{w}),\mathcal{O}(d))$ is given by the set of monomials $\{z_{4}z_{0}^2, z_{0}z_{1}^{8}, z_{1}z_{2}^{4}, z_{2}z_{3}^{30}, z_{3}z_{4}^{3}\}$. Furthermore, the moduli of the orbifold $Z_{f}$ is included in  
    $$Span\{z_{4}z_{0}^2, z_{0}z_{1}^{8}, z_{1}z_{2}^{4}, z_{2}z_{3}^{30}, z_{3}z_{4}^{3}\}/G(\textbf{w})$$
    where
    $G(\textbf{w})$ is defined on generators by
    $$
\varphi_{\mathrm{w}}\left(\begin{array}{l}
z_0 \\
z_1 \\
z_2 \\
z_3\\
z_4
\end{array}\right)=\left(\begin{array}{l}
\alpha_0 z_0 \\
\alpha_1 z_1 \\
\alpha_2 z_2 \\
\alpha_3 z_3\\
\alpha_4 z_4.
\end{array}\right)
$$
In this case $L_f$ does not admit inequivalent Sasaki-Einstein metrics.
\end{example}

\smallskip

\subsection{Rational homology 7-spheres: Thom-Sebastiani sums of invertible polynomials}
In this subsection we consider the following types of invertible polynomials:
\begin{align*}
    \mbox{Type I (Fermat-Cycle): } & f= z_{0}^{a_{0}}+z_{1}^{a_{1}}+z_{4}z_{2}^{a_{2}}+z_{2}z_{3}^{a_{3}}+z_{3}z_{4}^{a_{4}}\\
    \mbox{Type II (Chain-Cycle): } & f= z_{0}^{a_{0}}+z_{0}z_{1}^{a_{1}}+z_{4}z_{2}^{a_{2}}+z_{2}z_{3}^{a_{3}}+z_{3}z_{4}^{a_{4}}\\
    \mbox{Type III (Cycle-Cycle): } & f= z_{1}z_{0}^{a_{0}}+z_{0}z_{1}^{a_{1}}+z_{4}z_{2}^{a_{2}}+z_{2}z_{3}^{a_{3}}+z_{3}z_{4}^{a_{4}}.
\end{align*}
Let  us assumed that the weight vectors associated to these families of polynomials  have the form:
\begin{equation}
\mathbf{w}=\left(w_0, w_1, w_2, w_3, w_4\right)=\left(m_3 v_0, m_3 v_1, m_2 v_2, m_2 v_3, m_2 v_4\right)
\end{equation}
where $\gcd(m_{2},m_{3})=1$ and degree $d=m_2m_3.$ We will impose the  conditions $\gcd(v_{0},v_{1})=1$ and $\gcd(v_{i},v_{j})=1$ for $i\neq j $ with $i,j\in\{2,3,4\},$ which in particular implies that the weighted hypersurface $Z_f\subset \mathbb{P}(\bf{w})$ is well-formed. 

Before we find the generators of the space of deformations of  the orbifold $Z_f$, we will prove some technical lemmas on the different types of polynomials described above. 
\begin{lem} Let $f$ be an invertible polynomial of type I, II or III with associated weight vector $\bf w$ described in (3.2.1) and with degree $d=m_{2}m_{3}$. Then we have
     \begin{itemize}
         \item[a)] If $f$ is an invertible polynomial of type I, then $v_{0}=v_{1}=1$ in $\bf w$ and hence $w_0=w_1.$ 
         \item[b)]  If $f$ is an invertible polynomial of type II, then $v_{0}=1$ and if the pair $(\bf w, d)$ does not admit a polynomial of type I, then  $v_{1}\neq1$.
         \item[c)] If $f$ is an invertible polynomial of type III, such that $(\bf w, d)$ does not admit a polynomial of type II, then $v_{0}\neq1$ and $v_{1}\neq1$.
     \end{itemize}
     
\begin{proof}
    \begin{itemize}
        \item[a)] For a polynomial $f$ of type I: $$f=z_{0}^{a_{0}}+z_{1}^{a_{1}}+z_{4}z_{2}^{a_{2}}+z_{2}z_{3}^{a_{3}}+z_{3}z_{4}^{a_{4}}$$
        with associated weight vector $\textbf{w}=(w_{0},w_{1},w_{2},w_{3},w_{4})$ and degree $d=m_{2}m_{3}$, we have
        $$
            \left\{
                \begin{array}{rcl}
                    a_{0}w_{0} = d \Rightarrow a_{0}m_{3}v_{0} = m_{2}m_{3}  \\
                    a_{1}w_{1} = d \Rightarrow a_{1}m_{3}v_{1} = m_{2}m_{3}.
                \end{array}
            \right. 
        $$
        The equalities above imply $v_{0}\mid m_{2}$ and $v_{1} \mid m_{2}$. Since $w_{i}=m_{2}v_{i}$ for $i=2,3,4$, we have $ v_{0}\mid\gcd(w_{0},w_{2},w_{3},w_{4})$. Finally, since $\textbf{w}$ is well-formed, we have $v_{0}=1$. A similar argument shows that $v_{1}=1$.
        
        \item[b)] For a polynomial $f$ of type II $$f=z_{0}^{a_{0}}+z_{0}z_{1}^{a_{1}}+z_{4}z_{2}^{a_{2}}+z_{2}z_{3}^{a_{3}}+z_{3}z_{4}^{a_{4}}$$
        with associated weight vector $\textbf{w}=(w_{0},w_{1},w_{2},w_{3},w_{4})$ and degree $d=m_{2}m_{3}$, we have
        $$a_{0}w_{0}=d \Rightarrow a_{0}m_{3}v_{0}=m_{2}m_{3}.$$
        Thus $v_{0} \mid m_{2}$. As $w_{i}=m_{2}v_{i}$ for $i=2,3,4$, we also obtain $v_{0} \mid \gcd(w_{0},w_{2},w_{3},w_{4})$. Since $\textbf{w}$ is well-formed, we conclude that $v_{0}=1$. Now, if we suppose that $v_{1}=1$, then we can choose the polynomial        $$\tilde{f}=z_{0}^{m_{2}}+z_{1}^{m_{2}}+z_{4}z_{2}^{a_{2}}+z_{2}z_{3}^{a_{3}}+z_{3}z_{4}^{a_{4}}$$
        which is of type I for the weight vector $\bf w$, but this contradicts the hypothesis. Thus, we have $v_{1}\neq 1$.
        \item[c)] We consider an invertible polynomial of type III:     $$f=z_{1}z_{0}^{a_{0}}+z_{0}z_{1}^{a_{1}}+z_{4}z_{2}^{a_{2}}+z_{2}z_{3}^{a_{3}}+z_{3}z_{4}^{a_{4}},$$
        which has associated the weight vector $\textbf{w}=(w_{0},w_{1},w_{2},w_{3},w_{4})$ and degree $d=m_{2}m_{3}$. If we suppose that $v_{0}=1$, then $\bf w$ admits a polynomial $\tilde{f}$ of type II:        $$\tilde{f}=z_{0}^{m_{2}}+z_{0}z_{1}^{a_{1}}+z_{4}z_{2}^{a_{2}}+z_{2}z_{3}^{a_{3}}+z_{3}z_{4}^{a_{4}}$$
        which is a contradiction. Thus, we have $v_{0}\neq 1$. A similar process as the one given above leads to $v_{1}\neq 1$.
    \end{itemize}
\end{proof}
\end{lem}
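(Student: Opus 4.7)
The plan is to handle all three parts by a common recipe: extract divisibility constraints from the weight–degree equations imposed by the atoms appearing in $f$, then either invoke well-formedness of $\mathbf{w}$ to force $v_i=1$, or explicitly construct a polynomial of a strictly simpler type on the same $(\mathbf{w},d)$ in order to derive a contradiction.

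For (a), the two Fermat monomials $z_0^{a_0}$ and $z_1^{a_1}$ force $a_0 w_0 = a_1 w_1 = d$. Writing $w_0 = m_3 v_0$ and $d = m_2 m_3$ yields $v_0 \mid m_2$, and combined with $w_i = m_2 v_i$ for $i=2,3,4$ this gives $v_0 \mid \gcd(w_0, w_2, w_3, w_4)$. Well-formedness of $\mathbf{w}$, applied after removing the coordinate $w_1$, forces this gcd to equal $1$, whence $v_0 = 1$; the identity $v_1 = 1$ is proved by the symmetric argument, this time removing $w_0$.

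For (b), the Fermat block $z_0^{a_0}$ again gives $v_0 = 1$ by the argument of (a). To prove $v_1 \neq 1$, I would argue by contradiction: if $v_1 = 1$ then $w_1 = m_3$, so $w_1 \mid d$ and the monomial $z_1^{m_2}$ is weighted homogeneous of degree $d$. Replacing $z_0 z_1^{a_1}$ by $z_1^{m_2}$ produces
\[
\tilde{f} = z_0^{m_2} + z_1^{m_2} + z_4 z_2^{a_2} + z_2 z_3^{a_3} + z_3 z_4^{a_4},
\]
a polynomial of type I on the same $(\mathbf{w},d)$, contradicting the hypothesis. Part (c) runs in parallel: assuming $v_0 = 1$ gives $w_0 = m_3$, and swapping the cycle atom $z_1 z_0^{a_0}$ for the Fermat atom $z_0^{m_2}$ produces a type II polynomial on $(\mathbf{w},d)$, against hypothesis; the case $v_1 \neq 1$ is entirely symmetric.

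The one point that deserves genuine care is verifying that the substituted polynomials in (b) and (c) really belong to the simpler claimed type in the Kreuzer--Skarke sense, that is, that the new exponent matrix remains invertible and the resulting polynomial stays quasi-smooth. This is transparent here because each substitution affects only the $(z_0, z_1)$-block, which is itself a Thom--Sebastiani summand of $f$; the replacement atom is a valid invertible atom (Fermat) of the correct degree, so invertibility and quasi-smoothness are preserved blockwise. Once this has been noted, the three claims follow directly from the computations above.
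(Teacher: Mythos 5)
Your proof is correct and follows essentially the same route as the paper's: extract $v_0\mid m_2$ (hence $v_0\mid\gcd(w_0,w_2,w_3,w_4)$) from the Fermat atom and invoke well-formedness, then for the nonequalities substitute a Fermat atom for the chain or cycle atom in the $(z_0,z_1)$-block to contradict the non-admissibility hypothesis. Your added check that the substituted polynomial remains invertible and quasi-smooth blockwise is a harmless refinement of what the paper leaves implicit.
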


In the next lemma, we show that the existence of certain type of polynomial for a given weight vector $\bf w$ implies the presence of a different type of invertible polynomial associated to the same weight vector $\bf w$.

\begin{lem} Let $f$ be an invertible polynomial of type $I$, $II$ or $III$, where its associated weight vector $\bf w$ is described as in (3.2.1) and its  degree satisfies $d=m_{2}m_{3}$. Then the following hold:
\begin{itemize}
    \item[a)] If $f$ is a polynomial of type II, then its  associated weight vector $\bf w$ also admits a polynomial of type III.
    \item[b)] If $f$ is a polynomial of type I, then its  associated weight vector $\bf w$ also admits polynomials of type II and III.   
\end{itemize}
\begin{proof}
    \begin{itemize}
        \item[(a)]  Let $f$ be a polynomial of type II: $$f=z_{0}^{a_{0}}+z_{0}z_{1}^{a_{1}}+z_{4}z_{2}^{a_{2}}+z_{2}z_{3}^{a_{3}}+z_{3}z_{4}^{a_{4}}.$$
        By Lemma 3.2.1, we know $w_{0}=m_{3}$. As $d=m_{2}m_{3}$, we have $d-w_{1}=m_{3}(m_{2}-v_{1})>m_{3}$. This implies that $m_{3}\mid d-w_{1}$ and there exists an integer $\tilde{a}_{0}>1$ such that 
        $$\tilde{a}_{0}w_{0}=\tilde{a}_{0}m_{3}=d-w_{1}.$$
        Thus, the weights vector $\bf w$ admits a polynomial $\tilde{f}$ of type III:       $$\tilde{f}=z_{1}z_{0}^{\tilde{a}_{0}}+z_{0}z_{1}^{a_{1}}+z_{4}z_{2}^{a_{2}}+z_{2}z_{3}^{a_{3}}+z_{3}z_{4}^{a_{4}}.$$
        
        \item[(b)] If $\bf w$ is the associated weight vector to the invertible polynomial $f$ of type I: $$f=z_{0}^{a_{0}}+z_{1}^{a_{1}}+z_{4}z_{2}^{a_{2}}+z_{2}z_{3}^{a_{3}}+z_{3}z_{4}^{a_{4}},$$
        then by Lemma 3.2.1, we have $w_{0}=w_{1}=m_{3}$. As $d=a_{1}m_{3}$ and $d>w_{0}+w_{1}=2m_{3}$, these imply that $a_{1}\geq3$. Now, if we take  $\tilde{a}_{1}=a_{1}-1\geq2$, we can verify that $w_{0}+\tilde{a}_{1}w_{1}=d$. Therefore, $\bf w$ also admits a polynomial $\tilde{f}$ of type II:       $$\tilde{f}=z_{0}^{a_{0}}+z_{0}z_{1}^{\tilde{a}_{1}}+z_{4}z_{2}^{a_{2}}+z_{2}z_{3}^{a_{3}}+z_{3}z_{4}^{a_{4}}.$$
        Finally, from (a) of this lemma, we have that $\bf w$ also admits a polynomial of type III.
    \end{itemize}
\end{proof}
\end{lem}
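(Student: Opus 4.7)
My strategy is to prove both parts by explicitly modifying one atom of $f$ to produce a compatible invertible polynomial of the desired type having the same weight vector and degree, and then verifying the two things that can go wrong: the new exponent must be a positive integer, and it must be at least $2$ so that the modified polynomial is genuinely of the claimed shape rather than a degeneration.

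For part (a), I take a type II polynomial $f = z_0^{a_0} + z_0 z_1^{a_1} + z_4 z_2^{a_2} + z_2 z_3^{a_3} + z_3 z_4^{a_4}$. By Lemma 3.2.1(b), $v_0 = 1$ and hence $w_0 = m_3$, $a_0 = m_2$. I propose to replace the Fermat atom $z_0^{a_0}$ by a cycle atom $z_1 z_0^{\tilde{a}_0}$, where $\tilde{a}_0$ is forced by the degree equation $\tilde{a}_0 w_0 + w_1 = d$. This gives $\tilde{a}_0 = (d - w_1)/w_0 = m_2 - v_1$, which is automatically a positive integer since $w_0 = m_3$ divides $d - w_1 = m_3(m_2 - v_1)$ and $v_1 < m_2$. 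The new polynomial $\tilde{f} = z_1 z_0^{\tilde{a}_0} + z_0 z_1^{a_1} + z_4 z_2^{a_2} + z_2 z_3^{a_3} + z_3 z_4^{a_4}$ is then a genuine type III polynomial attached to the same $\mathbf{w}$.

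For part (b), I start from a type I polynomial and apply Lemma 3.2.1(a) to get $v_0 = v_1 = 1$, so $w_0 = w_1 = m_3$ and $a_1 = m_2$. I replace the second Fermat atom $z_1^{a_1}$ by a chain atom $z_0 z_1^{\tilde{a}_1}$, with $\tilde{a}_1 = (d - w_0)/w_1 = m_2 - 1 = a_1 - 1$ forced by the degree condition. To ensure $\tilde{a}_1 \geq 2$ I invoke the index constraint $|\mathbf{w}| - d = 1$, which after substitution reads $m_3(m_2 - 2) = m_2(v_2 + v_3 + v_4) - 1$; since $v_2 + v_3 + v_4 \geq 3$, the right-hand side is positive, so $m_2 \geq 3$ and $\tilde{a}_1 \geq 2$. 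The resulting $\tilde{f} = z_0^{a_0} + z_0 z_1^{\tilde{a}_1} + z_4 z_2^{a_2} + z_2 z_3^{a_3} + z_3 z_4^{a_4}$ is then of type II, and invoking part (a) on this $\tilde{f}$ immediately yields a type III polynomial for the same $\mathbf{w}$.

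The main technical point, which I expect to carry out carefully, is the lower bound $\tilde{a}_0 \geq 2$ in part (a): otherwise the cycle atom $z_1 z_0^{\tilde{a}_0}$ could collapse to the quadratic $z_0 z_1$ and clash with the other cycle block. I would extract this from the same index identity: for type II, $|\mathbf{w}| - d = 1$ rewrites as $m_3(m_2 - 1 - v_1) = m_2(v_2 + v_3 + v_4) - 1 \geq 2$, which forces $m_2 - v_1 \geq 2$, i.e., $\tilde{a}_0 \geq 2$. Beyond this short parity/positivity step, the rest of the argument is a direct divisibility and degree check, so no further obstacles should arise.
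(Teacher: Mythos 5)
Your proposal is correct and takes essentially the same approach as the paper: in each part you replace a Fermat atom by the cycle (resp.\ chain) atom whose exponent is forced by the degree equation and verify that this exponent is an integer at least $2$, which is exactly the paper's construction. The only cosmetic difference is that you derive the bounds $\tilde{a}_0=m_2-v_1\geq 2$ and $m_2\geq 3$ from the index identity $|\mathbf{w}|-d=1$, whereas the paper obtains them more directly from $d>w_0+w_1$ and the relation $w_0+a_1w_1=d$ (which gives $m_2=1+a_1v_1$); both justifications are valid.
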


For any invertible polynomial $f$ of type I, II or III, whose associated weight vector $\bf w$ is defined as in (3.2.1) and with degree  $d=m_{2}m_{3}$, we have that $f$ contains no block of the form $z_{i}^2+z_{j}^2$ (recall $\gcd(m_2, m_3)=1$). This implies that $\dim \mathfrak{Aut}(Z_f)=0$, see \cite{BGK}. Thus, for the hypersurface $\{f=0\}\subset\mathbb{C}^{5}$,  the complex dimension of the orbifold $Z_f\subset \mathbb{P}(\bf w)$ is given by the formula
\begin{equation}
   \dim_{\mathbb C } H^{0}(\mathbb{P}(\textbf{w}),\mathcal{O}(d))-\sum_{i}\dim_{\mathbb C } H^{0}(\mathbb{P}(\mathbf{w}),\mathcal{O}(w_{i})).
\end{equation}
  
We will begin computing the generators of $H^{0}(\mathbb{P}(\textbf{w}),\mathcal{O}(d))$. As before, we look for all different  monomials $z_{0}^{y_{0}}z_{1}^{y_{1}}z_{2}^{x_{2}}z_{3}^{x_{3}}z_{4}^{x_{4}}$ of degree $d$. Since $\textbf{w}=(w_{0},w_{1},w_{2},w_{3},w_{4})$ is the associated weight vector to $f$, equivalently we can solve the following Diophantine equation:
\begin{equation}
    w_{0}y_{0}+w_{1}y_{1}+w_{2}x_{2}+w_{3}x_{3}+w_{4}x_{4}=d
\end{equation}
with variables $y_{i},x_{i}\in\mathbb{Z}^{+}_{0}$ and where at least one of them is nonzero. The following lemma allows us to split  Equation (3.2.3) in two new Diophantine equations. 

\begin{lem}
    In the Diophantine equation (3.2.3), if some $y_{i}$ is a positive integer, then $x_{j}=0$, for all $j=2,3,4$. The converse is also true.
    \begin{proof}
         On the contrary, let us assume that there exist some positive integers $y_{i}$ and $x_{j}$. In this case, we can suppose without loss of generality that $y_{0}>0$ and $x_{2}>0$. Since $\bf w$ is defined as in (3.2.1), we can write Equation (3.2.3) as:
        \begin{equation*}        m_{3}v_{0}y_{0}+m_{3}v_{1}y_{1}+m_{2}v_{2}x_{2}+m_{2}v_{3}x_{3}+m_{2}v_{4}x_{4}=m_{2}m_{3}.
        \end{equation*}
    Then, we have
    \begin{equation*}
        m_{2}(v_{2}x_{2}+v_{3}x_{3}+v_{4}x_{4})=m_{3}(m_{2}-v_{0}y_{0}-v_{1}y_{1}).
    \end{equation*}
    Since $\gcd(m_{2},m_{3})=1$, then $m_{2}\mid (m_{2}-v_{0}y_{0}-v_{1}y_{1}).$ Moreover, since $m_{2}-v_{0}y_{0}-v_{1}y_{1}<m_{2}$, we obtain $m_{2}-v_{0}y_{0}-v_{1}y_{1}=0.$ Thus, we have   
    $$m_{2}(v_{2}x_{2}+v_{3}x_{3}+v_{4}x_{4})=m_{3}(m_{2}-v_{0}y_{0}-v_{1}y_{1})=0.$$
    Finally, as $x_{2}>0$ and $x_{3},x_{4}\in\mathbb{Z}_{0}^{+}$, the equality above implies that $m_{2}=0$,
    which is not possible. For the converse, the process is similar to the process in the previous argument.
    \end{proof}
\end{lem}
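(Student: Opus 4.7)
The plan is to use the divisibility/coprimality structure built into the weight vector. Substituting the explicit form $w_0 = m_3 v_0$, $w_1 = m_3 v_1$, $w_j = m_2 v_j$ for $j=2,3,4$, together with $d = m_2 m_3$, into the Diophantine equation (3.2.3) turns it into
$$m_3(v_0 y_0 + v_1 y_1) + m_2(v_2 x_2 + v_3 x_3 + v_4 x_4) = m_2 m_3.$$
Writing $A := v_0 y_0 + v_1 y_1$ and $B := v_2 x_2 + v_3 x_3 + v_4 x_4$, the relation becomes $m_3 A + m_2 B = m_2 m_3$ with $A, B \in \mathbb{Z}_{\geq 0}$. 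The whole proof reduces to analyzing this single equation.

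First I would reduce modulo $m_2$: this gives $m_3 A \equiv 0 \pmod{m_2}$, and since $\gcd(m_2, m_3) = 1$ by hypothesis, we conclude $m_2 \mid A$. The trivial bound $m_3 A \leq m_2 m_3$ gives $A \leq m_2$, so the only possibilities are $A = 0$ or $A = m_2$. In the latter case, substituting back yields $m_2 B = 0$, hence $B = 0$.

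Now suppose some $y_i$ is strictly positive. Since the $v_i$'s are positive integers (they are entries of a well-formed weight vector), we have $A \geq 1$, which rules out $A = 0$, so $A = m_2$ and therefore $B = 0$. Positivity of $v_2, v_3, v_4$ then forces $x_2 = x_3 = x_4 = 0$, as claimed. The converse direction is the symmetric argument obtained by swapping the roles of $m_2$ and $m_3$: reducing mod $m_3$ gives $m_3 \mid B$ and $B \leq m_3$, so $B \in \{0, m_3\}$; if some $x_j > 0$ then $B = m_3$, forcing $A = 0$ and hence $y_0 = y_1 = 0$.

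I do not foresee a real obstacle here — the argument is essentially a one-line coprimality observation once the equation is rewritten in the form $m_3 A + m_2 B = m_2 m_3$. The only point worth stating explicitly is that the equivalence between ``some $y_i > 0$'' and ``$A > 0$'' (and likewise for $B$) uses strict positivity of all $v_i$, which is guaranteed by the well-formedness assumption recalled after (3.2.1).
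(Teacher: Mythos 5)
Your proof is correct and follows essentially the same route as the paper: both substitute the weight decomposition to reduce the equation to $m_3 A + m_2 B = m_2 m_3$ and then exploit $\gcd(m_2,m_3)=1$ together with the obvious bound to pin down $A$ and $B$. The only difference is presentational — the paper argues by contradiction while you argue directly and handle the converse by explicit symmetry, which is if anything slightly cleaner.
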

From this lemma we conclude that the solutions of (3.2.3) can be obtained putting together the solutions of each one of the following Diophantine equations:
\begin{equation}
    w_{0}y_{0}+w_{1}y_{1}=d
\end{equation}
and
\begin{equation}
    w_{2}x_{2}+w_{3}x_{3}+w_{4}x_{4}=d.
\end{equation}

Next, we solve Equations (3.2.4) and (3.2.5).  The next lemma will allow us to find all the solutions of Equation (3.2.4).

\begin{lem}
 Let $f$ be an invertible polynomial of type I, II or III, whose associated weight vector is $\textbf{w} =(w_{0},w_{1},w_{2},w_{3},w_{4})$ satisfying (3.2.1) and with degree  $d=m_{2}m_{3}$. Then we have:
 \begin{itemize}
    \item[a)] If $f$ is of type I, then Equation (3.2.4) has $m_{2}+1$ solutions.
    \item[b)]  If $f$ is of type II and its associated weight vector $\bf w$ does not admit polynomial of type I, then Equation (3.2.4) has $ \dfrac{m_{2}}{v_{1}}-\dfrac{1}{v_{1}}+1$  solutions.
    \item[c)]  If $f$ is of type III and its associated weight vector $\bf w$ does not admit polynomial of type II, then Equation (3.2.4) has $\dfrac{m_{2}}{v_{0}v_{1}}-\dfrac{1}{v_{0}}-\dfrac{1}{v_{1}}+1$ solutions.
\end{itemize}
Moreover, all these quantities which represent the number of solutions of (3.2.4) are equivalent to
$$\left\lfloor \dfrac{m_{2}}{v_{0}v_{1}}\right\rfloor+1.$$
  \begin{proof} Indeed  
  \begin{itemize}
          \item[a)] Since $f$ is of type I, we have $v_{0}=v_{1}=1$. These imply that $w_{0}=w_{1}=m_{3}$. Replacing in Equation (3.2.4), we obtain
          $$m_{3}y_{0}+m_{3}y_{1}=d =m_{2}m_{3}.$$
          Simplifying, we obtain a new  Diophantine equation $y_{0}+y_{1}=m_{2}$. Since all solutions are in $\mathbb{Z}_{0}^{+}$, 
          these are given by the pairs
          $$\{ (0,m_{2}), (1,m_{2}-1),\dots (m_{2},0)\}.$$
          Thus Equation (3.2.4) has $m_{2}+1$ solutions.
          \item[b)]  By Lemma 3.2.1, we have $v_{0}=1$ and $v_{1}\neq1$. As a consequence, we obtain $w_{0}=m_{3}$. Moreover, as $d=m_{2}m_{3}$ and $w_{1}=m_{3}v_{1}$, the Equation (3.2.4) can be reduced to
          \begin{equation}
              y_{0}+v_{1}y_{1}=m_{2}.
          \end{equation}
          Since $(m_{2},0)$ is a particular solution of (3.2.6), by Lemma 3.1.1 we have that all its solutions are given by the pairs
          $$(y_{0},y_{1})=(m_{2}-kv_{1},k) \ \ \mbox{ where }k\in\mathbb{Z}.$$
          Since $y_{0},y_{1}\in\mathbb{Z}_{0}^{+}$, we obtain
          \begin{equation}
          0\leq k\leq \dfrac{m_{2}}{v_{1}}.
          \end{equation}
          Moreover, as $\bf w$ admits a polynomial of type II, we have $w_{0}+a_{1}w_{1}=d$. By above, it implies that $1+a_{1}v_{1}=m_{2}$. Thus, we  conclude that $\dfrac{m_{2}-1}{v_{1}}$ is an integer. 
          Considering this in (3.2.7), we write
          $$0\leq k\leq \left(\dfrac{m_{2}}{v_{1}}-\dfrac{1}{v_{1}}\right)+\dfrac{1}{v_{1}}.$$
           Moreover, as $v_{1}\neq1$,  we have the number of solutions of (3.2.4):
           $$\left\lfloor \dfrac{m_{2}}{v_{1}} \right\rfloor+1=\dfrac{m_{2}}{v_{1}}-\dfrac{1}{v_{1}}+1.$$
           \item[c)] Let $f$ be the polynomial of type III associated to $\bf w$:       $$f=z_{1}z_{0}^{a_{0}}+z_{0}z_{1}^{a_{1}}+z_{4}z_{2}^{a_{2}}+z_{2}z_{3}^{a_{3}}+z_{3}z_{4}^{a_{4}}.$$
           Replacing $w_{0}=m_{3}v_{0}$, $w_{1}=m_{3}v_{1}$ and $d=m_{2}m_{3}$ in (3.2.4) and then simplifying, we obtain the new equation
           \begin{equation}
               v_{0}y_{0}+v_{1}y_{1}=m_{2}
           \end{equation}
           Since $\gcd(v_{0},v_{1})=1$ and the pair $(a_{0},1)$ is a solution of (3.2.8), we have that all solutions of the Diophantine equation (3.2.8) are given by the pairs
           $$(y_{0},y_{1})=(a_{0}-kv_{1},1+kv_{0})$$
           As $y_{0},y_{1}\in\mathbb{Z}_{0}^{+}$, then $k$ is restricted to 
           \begin{equation}
               -\dfrac{1}{v_{0}}\leq k\leq \dfrac{a_{0}}{v_{1}} 
           \end{equation}
           Since $\bf w$ does not admit polynomials of type II, we have $v_{0}\neq 1$. Thus, we obtain $k\geq0$. On the other hand, as the polynomial $f$ is associated to $\bf w$, we have $a_{0}w_{0}+w_{1}=d$. This implies that $a_{0}v_{0}+v_{1}=m_{2}$. Then 
           $$\dfrac{a_{0}}{v_{1}}=\dfrac{m_{2}-v_{1}}{v_{0}v_{1}} = \dfrac{m_{2}}{v_{0}v_{1}}-\dfrac{1}{v_{0}} = \left(\dfrac{m_{2}}{v_{0}v_{1}}-\dfrac{1}{v_{0}}-\dfrac{1}{v_{1}}\right) +\dfrac{1}{v_{1}}.$$
           Using again that $\bf w$ does not admit polynomial of type II, we have $v_{1}\neq1$, which implies that $\dfrac{1}{v_{1}}<1$. In addition, since $\alpha(\textbf{w})=\dfrac{m_{2}}{v_{0}v_{1}}-\dfrac{1}{v_{0}}-\dfrac{1}{v_{1}}$ is a integer, from  (3.2.9) we obtain 
           $$0\leq k \leq \left\lfloor\dfrac{a_{0}}{v_{1}}\right\rfloor = \dfrac{m_{2}}{v_{0}v_{1}}-\dfrac{1}{v_{0}}-\dfrac{1}{v_{1}}$$
           Finally, as $\gcd(v_{0},v_{1})=1$, then $0<\dfrac{1}{v_{0}}+\dfrac{1}{v_{1}}$. From $\gcd(v_0, v_1)=1,$ it follows  that the number of solutions of (3.2.4) is $$\left\lfloor\dfrac{a_{0}}{v_{1}}\right\rfloor +1 = \dfrac{m_{2}}{v_{0}v_{1}}-\dfrac{1}{v_{0}}-\dfrac{1}{v_{1}}+1=\left\lfloor\dfrac{m_{2}}{v_{0}v_{1}}\right\rfloor +1.$$ 
           \end{itemize}
  \end{proof}  
\end{lem}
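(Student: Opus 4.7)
The plan is to reduce the three cases to a single normalized Diophantine equation. Substituting $w_0 = m_3 v_0$, $w_1 = m_3 v_1$ and $d = m_2 m_3$ into (3.2.4) and dividing through by $m_3$ turns the problem into counting the non-negative integer solutions of
$$v_0 y_0 + v_1 y_1 = m_2,$$
with $v_0, v_1$ constrained by Lemma 3.2.1 according to the type.

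For type I, Lemma 3.2.1(a) yields $v_0 = v_1 = 1$ and the equation becomes $y_0 + y_1 = m_2$, whose non-negative solutions are the $m_2 + 1$ pairs $(j, m_2-j)$ with $0 \le j \le m_2$. For type II, Lemma 3.2.1(b) yields $v_0 = 1$ and $v_1 > 1$, giving $y_0 + v_1 y_1 = m_2$; with the particular solution $(m_2, 0)$, Lemma 3.1.1 describes all solutions as $(m_2 - kv_1, k)$ for $k \in \mathbb{Z}$, and non-negativity forces $0 \le k \le m_2/v_1$. The monomial $z_0 z_1^{a_1}$ in $f$ gives the exponent relation $w_0 + a_1 w_1 = d$, i.e.\ $1 + a_1 v_1 = m_2$, so $v_1 \mid m_2 - 1$ and the number of solutions is $(m_2 - 1)/v_1 + 1$, matching the claimed count.

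For type III, Lemma 3.2.1(c) yields $v_0, v_1 > 1$ with $\gcd(v_0, v_1) = 1$. The monomial $z_1 z_0^{a_0}$ gives $a_0 v_0 + v_1 = m_2$, so $(a_0, 1)$ is a particular solution; by Lemma 3.1.1 the general solution is $(a_0 - k v_1,\, 1 + k v_0)$, and the bounds $-1/v_0 \le k \le a_0/v_1$ collapse to $0 \le k \le a_0/v_1$ since $v_0 > 1$. I would then rewrite
$$\frac{a_0}{v_1} = \frac{m_2 - v_1}{v_0 v_1} = \left(\frac{m_2}{v_0 v_1} - \frac{1}{v_0} - \frac{1}{v_1}\right) + \frac{1}{v_1},$$
so the count becomes $\lfloor a_0/v_1 \rfloor + 1$ equal to $m_2/(v_0 v_1) - 1/v_0 - 1/v_1 + 1$, provided that this expression is an integer. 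Finally, to verify the unified form $\lfloor m_2/(v_0 v_1) \rfloor + 1$, I would check each case: for type I it reads $m_2 + 1$; for type II, $\lfloor m_2/v_1 \rfloor + 1 = (m_2-1)/v_1 + 1$; for type III it matches the count just derived.

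The step I expect to require the most care is the integrality assertion in type III, namely that $m_2/(v_0 v_1) - 1/v_0 - 1/v_1 \in \mathbb{Z}$. From the two relations $a_0 v_0 + v_1 = m_2$ and $v_0 + a_1 v_1 = m_2$ (the latter coming from $z_0 z_1^{a_1}$), one obtains $v_0 \mid m_2 - v_1$ and $v_1 \mid m_2 - v_0$, hence $v_0 \mid m_2 - v_0 - v_1$ and $v_1 \mid m_2 - v_0 - v_1$; since $\gcd(v_0, v_1) = 1$ this forces $v_0 v_1 \mid m_2 - v_0 - v_1$, giving the required integrality. This is essentially the same integrality mechanism that underlies $\beta(\mathbf{w}) = 1$ in equations (2.2)–(2.4), now applied to the first cycle block instead of the second. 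Once this is in place, the three counts collapse into the common formula $\lfloor m_2/(v_0 v_1) \rfloor + 1$ and the lemma follows.
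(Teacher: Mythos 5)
Your proposal is correct and follows essentially the same route as the paper: reduce (3.2.4) to $v_0y_0+v_1y_1=m_2$, apply Lemma 3.2.1 to pin down $v_0,v_1$ in each type, and count via Lemma 3.1.1. The only (welcome) difference is at the integrality step in type III, where you derive $v_0v_1\mid m_2-v_0-v_1$ directly from the two exponent relations and $\gcd(v_0,v_1)=1$, whereas the paper simply invokes the $\mathbb{Q}$-homology sphere condition; your version is more self-contained but proves the same fact.
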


\begin{rem} We have the following  remarks.
    \begin{itemize}
        \item[a)] Notice that if $f$ is a polynomial of type II such that its associated weight vector $\bf w$ admits a polynomial of type I, then we can work as in a) of the  lemma above. Therefore, the equation (3.2.4) has $m_{2}+1$ solutions.
        \item[b)] On the other hand, if $f$ is a polynomial of type III whose associated weight vector $\bf w$ admits a polynomial of type II but not type I, then we can use the case b) of the previous lemma.
        \item[c)] Finally, if $f$ is a polynomial of type III whose associated weight vector admits a polynomial of type I and II, then the number of solutions of (3.2.4)  is obtained as in a) of the previous lemma. 
    \end{itemize}
\end{rem}

It remains to  compute the number of solutions of Equation (3.2.5). Due to the fact that the polynomials of type I, II or III have the same cycle block:
\begin{equation}
z_{4}z_{2}^{a_{2}}+z_{2}z_{3}^{a_{3}}+z_{3}z_{4}^{a_{4}},
\end{equation}
the  number of solutions of the Diophantine Equation (3.2.5) is independent of any type. On the other hand, as the polynomial $f$ has degree $d=m_{2}m_{3}$ and $\bf w$ is written as in (3.2.1), then we obtain from (3.2.10) the following equations for the weights $w_{i}$'s:
\begin{align}
    a_{2}w_{2}+w_{4}=d & \Rightarrow v_{4}+a_{2}v_{2}=m_{3}\\
    a_{3}w_{3}+w_{2}=d & \Rightarrow v_{2}+a_{3}v_{3}=m_{3}\\
    a_{4}w_{4}+w_{3}=d & \Rightarrow v_{3}+a_{4}v_{4}=m_{3}.
\end{align}
In addition, as the link $L_{f}$ is a rational homology sphere, it  verifies Equality (2.1.4): $m_{3}=a_{2}a_{3}a_{4}+1$. Thus, we can write $v_{2}$, $v_{3}$ and $v_{4}$ as
\begin{equation}
    v_{2} = a_{4}a_{3}-a_{3}+1, \ \ v_{3} = a_{2}a_{4}-a_{4}+1, \ \ \mbox{ and } \ v_{4} = a_{3}a_{2}-a_{2}+1.
\end{equation}
Next, we will solve Equation (3.2.5):
\begin{lem}
    The Diophantine equation
    $$w_{2}x_{2}+w_{3}x_{3}+w_{4}x_{4}=d,$$
where $w_{i}$'s are defined as in (3.2.1), has only three solutions.
\begin{proof}
    Since $w_{j}=m_{2}v_{j}$, for $j=2,3,4$ and $d=m_{2}m_{3}$, we have an equivalent equation to (3.2.5):
    \begin{equation}
        v_{2}x_{2}+v_{3}x_{3}+v_{4}x_{4}=m_{3}.
    \end{equation}
As $\gcd(v_{2},v_{3})=1$, we can define a new variable $\tilde{x}=v_{2}x_{2}+v_{3}x_{3}.$ Thus, we can write Equation (3.2.15) as
\begin{equation}
    \tilde{x}+v_{4}x_{4}=m_{3}.
\end{equation}
By (3.2.13), a particular solution of the equation (3.2.16) is given by the pair $(v_{3},a_{4}).$ Then the general solution of (3.2.16) is 
\begin{equation}
    (\tilde{x},x_{4})=(v_{3}+tv_{4},a_{4}-t), \ \ \mbox{ where }t\in\mathbb{Z}.
\end{equation}
Now, we consider the Diophantine equation 
\begin{equation}
 v_{2}x_{2}+v_{3}x_{3}=\tilde{x}=v_{3}+tv_{4}   
\end{equation}
From (3.2.11) and (3.2.12), we have
$$v_{4}=m_{3}-a_{2}v_{2}=(v_{2}+a_{3}v_{3})-a_{2}v_{2}=a_{3}v_{3}+(1-a_{2})v_{2}.$$
Therefore, a particular solution of (3.2.18) is the pair $(t(1-a_{2}),1+ta_{3})$. Then the general solution of (3.2.18) is given by
\begin{equation}
    (x_{2},x_{3})=(t(1-a_{2})+sv_{3},1+ta_{3}-sv_{2}), \ \ \mbox{ where }t,s\in\mathbb{Z}.
\end{equation}
From (3.2.17) and (3.2.19), we obtain the general solution of the Diophantine equation (3.2.15):
\begin{equation}
    (x_{2},x_{3},x_{4}) = (t(1-a_{2})+sv_{3}, 1+ta_{3}-sv_{2},a_{4}-t), \ \ \mbox{ where }t,s\in\mathbb{Z}.
\end{equation}

\underline{\textbf{Claim:}} $\bf s\in\{0,1\}$ 

Since we require $x_{2},x_{3}\in\mathbb{Z}_{0}^{+}$, we have that
$x_{2}=t(1-a_{2})+sv_{3}\geq0$ and $x_{3}=1+ta_{3}-sv_{2}\geq0$. This inequalities imply that
$$\dfrac{sv_{2}-1}{a_{3}}\leq t \leq \dfrac{sv_{3}}{a_{2}-1}.$$
From this, we obtain
\begin{align*}
   \dfrac{sv_{2}-1}{a_{3}}\leq\dfrac{sv_{3}}{a_{2}-1} & \Rightarrow (a_{2}-1)(sv_{2}-1) \leq sv_{3}a_{3} \\
   & \Rightarrow -(a_{2}-1) \leq  s(v_{3}a_{3}-v_{2}(a_{2}-1)).
\end{align*}
Using (3.2.11) and (3.2.12) we have $v_{4}=v_{3}a_{3}-v_{2}(a_{2}-1).$ Replacing above, we can write  
$$-(a_{2}-1) \leq sv_{4} \Rightarrow -\dfrac{a_{2}-1}{v_{4}}\leq s.$$
Now, using the expression for $v_{4}$ given in (3.2.14) and the fact of that $a_{2}a_{3}\geq 2a_{2} >2a_{2}-1,$ we have
$$v_{4}=a_{2}a_{3}-a_{2}+1 >a_{2} \Rightarrow \dfrac{a_{2}-1}{v_{4}}<1.$$ 
Hence 
\begin{equation}
   -1 < -\dfrac{a_{2}-1}{v_{4}}\leq s. 
\end{equation}
On the other hand, as $x_{3}\geq0$ and $x_{4}\geq0$, then $1+ta_{3}\geq sv_{2}$ and $a_{4}\geq t$, respectively. Putting together these two inequalities, we obtain
$$1+a_{4}a_{3}\geq 1+ta_{3} \geq sv_{2}.$$ 
Also, from the expression given for  $v_{2}$ given in (3.2.14) and the inequality $a_{4}a_{3}\geq 2a_{3} > 2a_{3} -1$, we have 
$$2v_{2}=2a_{4}a_{3}-2a_{3}+2 > 1+a_{4}a_{3}.$$
Since $sv_{2}\leq 1+a_{4}a_{3}$, we conclude that
\begin{equation}
    s\leq \dfrac{1+a_{4}a_{3}}{v_{2}}<2.
\end{equation}
From (3.2.21) and (3.2.22), we conclude that  $s\in\{0,1\}.$

Next, we will exhibit the solutions that are obtained for each $s$. 

\underline{ \textbf{For } $\bf s=0$}: In this case, the general solution of (3.2.15) is given by 
$$(x_{2},x_{3},x_{4})=(t(1-a_{2}),1+ta_{3},a_{4}-t).$$
Since $x_{i}\geq0$, we have $t(1-a_{2})\geq0$, $1+ta_{3}\geq0$ and $a_{4}-t\geq0$. As $a_{2}-1>0$, then $t\leq0$. On the other hand, the inequality $1+ta_{3}\geq0$ implies that $t\geq -\frac{1}{a_{3}}>-1$ so $t=0$. Therefore,  $(0,1,a_{4})$ is the unique solution for Equation (3.2.15).

\underline{ \textbf{For } $\bf s=1$}: The general solution of (3.2.15) is given by
$$(x_{2},x_{3},x_{4})=(t(1-a_{2})+v_{3},1+ta_{3}-v_{2},a_{4}-t).$$
Since $x_{4}=a_{4}-t\geq0$, we have $a_{4}\geq t$. On the other hand, as $x_{3}=1+ta_{3}-v_{2}\geq0$ and  $v_{2}=a_{4}a_{3}-a_{3}+1$ in (3.2.14), we obtain
$$t\geq \dfrac{v_{2}-1}{a_{3}}=a_{4}-1$$
Thus, $t\in\{a_{4}-1,a_{4}\}$. So the two solutions for $s=1$ are 
$$\left\{
                \begin{array}{cl}
                    (1,a_{3},0), & \hbox{ if }  t=a_{4} , \\
                    (a_{2},0,1), & \hbox{ if }  t=a_{4}-1.
                \end{array}
            \right. 
        $$
\end{proof}
\end{lem}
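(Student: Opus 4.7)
My plan is to reduce to the equation $v_2 x_2 + v_3 x_3 + v_4 x_4 = m_3$ (using $\gcd(m_2,m_3)=1$, which forces $m_2 \mid (v_2 x_2 + v_3 x_3 + v_4 x_4)$ minus $m_3$, etc.), and then use the arithmetic identities (3.2.11)--(3.2.13) to exhibit three candidate solutions:
$$
(x_2,x_3,x_4)\in\{\,(0,1,a_4),\;(a_2,0,1),\;(1,a_3,0)\,\},
$$
which are exactly the exponent vectors of the three monomials comprising the cycle block $z_4 z_2^{a_2}+z_2 z_3^{a_3}+z_3 z_4^{a_4}$. The goal is to prove there are no others.

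The key step is to treat one variable, say $x_4$, as a parameter and parametrize all integer solutions of $v_2 x_2 + v_3 x_3 = m_3 - v_4 x_4$ using $\gcd(v_2,v_3)=1$ together with Lemma 3.1.1. Taking a convenient particular solution obtained from (3.2.12), the general solution becomes a two-parameter family $(x_2,x_3,x_4)=(\text{linear in }s,t)$, and the nonnegativity constraints $x_i\geq 0$ translate into a pair of inequalities that couple $s$ and $t$. The main work, which I expect to be the hardest part, is to combine these inequalities with the explicit formulas
$$
v_2=a_3 a_4 - a_3 + 1,\quad v_3=a_2 a_4 - a_4 + 1,\quad v_4=a_2 a_3 - a_2 + 1
$$
to pin the ``free'' parameter (say $s$) into a two-element set, by showing separately that $s > -1$ and $s < 2$. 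Concretely, the lower bound should follow from an estimate of the form $v_4 > a_2 - 1$ (immediate from $a_3\geq 2$), and the upper bound from $2 v_2 > 1 + a_3 a_4$ (immediate from $a_4\geq 2$).

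Once $s\in\{0,1\}$ is established, the remaining parameter $t$ is bounded in each case by two simple inequalities ($t\leq 0$ versus $t\geq -1/a_3$ when $s=0$, and $t\geq a_4 - 1$ versus $t\leq a_4$ when $s=1$), which immediately force $t\in\{0\}$ or $t\in\{a_4-1,a_4\}$ respectively. Substituting back yields exactly the three solutions listed above, and no others. The conclusion is that $H^0(\mathbb{P}(\mathbf{w}),\mathcal{O}(d))$ contributes the three generators $z_3 z_4^{a_4}$, $z_4 z_2^{a_2}$, $z_2 z_3^{a_3}$ from the ``cycle side'' of the polynomial, matching the structure of the monomials already present in $f$.
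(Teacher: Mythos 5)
Your proposal is correct and follows essentially the same route as the paper: reduce to $v_{2}x_{2}+v_{3}x_{3}+v_{4}x_{4}=m_{3}$, parametrize via $\gcd(v_{2},v_{3})=1$ into a two-parameter family, pin $s\in\{0,1\}$ using exactly the bounds $v_{4}>a_{2}-1$ and $2v_{2}>1+a_{3}a_{4}$, and then bound $t$ in each case to recover the three cycle-block monomials. (One cosmetic point: no divisibility argument with $\gcd(m_{2},m_{3})$ is needed in the first reduction, since every term of the equation is already a multiple of $m_{2}$.)
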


Thus, for $f$ be an invertible polynomial with associated weight vector  $\textbf{w} =(w_{0},w_{1},w_{2},w_{3},w_{4})$ satisfying (3.2.1) and  with  degree  $d=m_{2}m_{3}$ we have
\begin{itemize}
\item If $f$ is a polynomial of type I, then the set of generators of 
            $H^{0}(\mathbb{P}(\textbf{w}),\mathcal{O}(d))$ is given by the following monomials of degree $d$:
            $$\{  z_{1}^{m_{2}}, z_{0}z_{1}^{m_{2}-1},\dots , z_{0}^{m_{2}-1}z_{1},z_{0}^{m_{2}}, z_{4}z_{2}^{a_{2}},  z_{2}z_{3}^{a_{3}},  z_{3}z_{4}^{a_{4}} \}.$$ 
  \item If $f$ is a polynomial of type II and its associated weight vector $\bf w$ does not admit polynomial of type I, then the set of generators of  
  $H^{0}(\mathbb{P}(\textbf{w}),\mathcal{O}(d))$ is given by the following monomials of degree $d:$ 
            $$\left \{z_{0}^{m_{2}-kv_{1}}z_{1}^{k},   z_{4}z_{2}^{a_{2}},   z_{2}z_{3}^{a_{3}},   z_{3}z_{4}^{a_{4}},\mbox{ where }  0\leq k  \leq \left\lfloor \dfrac{m_{2}}{v_{1}}\right \rfloor\right \}.$$           
 \item  If $f$ is a polynomial of type III and its associated weight vector $\bf w$ does not admit polynomial of type II, then the set of generators of  
 $H^{0}(\mathbb{P}(\textbf{w}),\mathcal{O}(d))$ is given by the following monomials of degree $d:$ 
            $$\left \{z_{0}^{a_0-kv_{1}}z_{1}^{1+kv_0},   z_{4}z_{2}^{a_{2}},   z_{2}z_{3}^{a_{3}},   z_{3}z_{4}^{a_{4}}, \mbox{ where }  0\leq k  \leq \left\lfloor \dfrac{m_{2}}{v_{0}v_{1}}\right \rfloor\right \}.$$             
            
\end{itemize}
\smallskip

Now  we will compute  
$\dim_\mathbb{C} H^{0}(\mathbb{P}(\textbf{w}),\mathcal{O}(w_{i}))$ for each $w_{i}$, that is, we will  find all the solutions of the following Diophantine equation
\begin{equation}    w_{0}y_{0}+w_{1}y_{1}+w_{2}x_{2}+w_{3}x_{3}+w_{4}x_{4}=w_{i},
\end{equation}
with $x_{j},y_{k}\in\mathbb{Z}_{0}^{+}$, where at least one of them is nonzero. In the next lemma, we will do this for either  $w_{i}=w_{0}$ or $w_{i}=w_{1}.$
\begin{lem}
 Let $f$ be an invertible polynomial of type I, II or III, with associated weight vector  $\textbf{w} =(w_{0},w_{1},w_{2},w_{3},w_{4})$ satisfying (3.2.1) and  with  degree  $d=m_{2}m_{3}$.  
 We consider the following Diophantine equations
   \begin{align}  w_{0}y_{0}+w_{1}y_{1}+w_{2}x_{2}+w_{3}x_{3}+w_{4}x_{4} & =w_{0}\\
   w_{0}y_{0}+w_{1}y_{1}+w_{2}x_{2}+w_{3}x_{3}+w_{4}x_{4} & =w_{1}.
   \end{align}
Then, we have:
\begin{itemize}
    \item[a)]  If $f$ is of type I, then  equations (3.2.24) and (3.2.25) both have two solutions. Moreover,  the set of  generators of $H^{0}(\mathbb{P}(\textbf{w}),\mathcal{O}(w_{i}))$ is  given by 
    $\{z_0, z_1\}$ for  $i=0,1.$
    \item[b)]  If $f$ is of type II and its associated weight vector $\bf w$ does not admit polynomial of type I, then  equations (3.2.24) and (3.2.25) have one  and two solutions, respectively. Moreover, the set of generators of  
    $H^{0}(\mathbb{P}(\textbf{w}),\mathcal{O}(w_{0}))$ is  given by $\{z_0\}$ and the set generators for $H^{0}(\mathbb{P}(\textbf{w}),\mathcal{O}(w_{1}))$  is  
    $\{z_0^{v_1}, z_1 \}.$
    \item[c)]   If $f$ is of type III and its associated weight vector $\bf w$ does not admit polynomial of type II, then the equations (3.2.24) and (3.2.25) both have one  solution. 
    Moreover, the set of generators for 
    $H^{0}(\mathbb{P}(\textbf{w}),\mathcal{O}(w_{0}))$ is  $\{z_0 \}$  and the set of generators for  $H^{0}(\mathbb{P}(\textbf{w}), \mathcal{O}(w_{1}))$ is  $\{z_1\}$
\end{itemize}
\begin{proof}
Let us study each case:
    \begin{itemize}
        \item[(a)] If $f$ is of type I, we can write  
        $$\textbf{w}= (w_{0},w_{1},w_{2},w_{3},w_{4})=(m_{3},m_{3},m_{2}v_{2},m_{2}v_{3},m_{2}v_{4}).$$
        Since $w_{0}=m_{3}$ and $d=m_{2}m_{3}$, adding $(m_{2}-1)w_{0}$ on both sides of (3.2.24), we obtain
        \begin{equation}   
        w_{0}\hat{y}_{0}+w_{1}y_{1}+w_{2}x_{2}+w_{3}x_{3}+w_{4}x_{4}  =d,
        \end{equation}
        where the new variable $\hat{y}_{0}=y_{0}+(m_{2}-1)\geq m_{2}-1$. 
        From Lemmas 3.2.4 and 3.2.5, it follows that  the solutions of Equation (3.2.26) belong to the set 
$$\,\qquad\{(t,m_{2}-t,0,0,0), (0,0,a_{2},0,1), (0,0,1,a_{3},0),(0,0,0,1,a_{4}), \mbox{ where }t\in\mathbb{Z}_{0}^{+}, t\leq m_{2}\}.$$         
Since $\hat{y}_{0}\geq m_{2}-1$, we will find only two possible solutions for (3.2.26): if $\hat{y}_{0}=m_{2}$, we have $y_{0}=1$ and $y_{1}=0$. On the other hand, if $\hat{y}_{0}=m_{2}-1$, we obtain $y_{0}=0$ and $y_{1}=1$. Thus, the solutions obtained are
        $$(1,0,0,0,0) \mbox{ and } (0,1,0,0,0).$$
        Since $w_{1}=w_{0}$, we also obtain two solutions for Equation (3.2.25).
        
        \item[(b)]  First, we will solve Equation (3.2.24). Since $f=z_{0}^{a_{0}}+z_{0}z_{1}^{a_{1}}+z_{4}z_{2}^{a_{2}}+z_{2}z_{3}^{a_{3}}+z_{3}z_{4}^{a_{4}}$ is a polynomial of type II and its associated weight vector $\bf w$ does not admit polynomials of type I, we can express $\bf w$ as
        $$\textbf{w}= (w_{0},w_{1},w_{2},w_{3},w_{4})=(m_{3},m_{3}v_{1},m_{2}v_{2},m_{2}v_{3},m_{2}v_{4}), \ \ \mbox{ where } v_{1}\neq 1.$$
        As $w_{0}=m_{3}$ and $d=m_{2}m_{3}$, then adding $(m_{2}-1)w_{0}$ on both sides of the Equation (3.2.24), we obtain
        \begin{equation}   w_{0}\hat{y}_{0}+w_{1}y_{1}+w_{2}x_{2}+w_{3}x_{3}+w_{4}x_{4}  =d
        \end{equation}
        with new variable $\hat{y}_{0}=y_{0}+(m_{2}-1)\geq m_{2}-1$. By Lemmas 3.2.4 and 3.2.5, the solutions of (3.2.27) belong to the set
        $$\qquad\left\{(m_{2}-tv_{1},t,0,0,0), (0,0,a_{2},0,1), (0,0,1,a_{3},0),(0,0,0,1,a_{4}), \mbox{ where }t\in\mathbb{Z}_{0}^{+}, t\leq \left\lfloor\dfrac{m_{2}}{v_{1}}\right\rfloor \right\}$$
        Notice that this is possible only when $\hat{y}_{0}=m_{2}-tv_{1}\geq m_{2}-1$. Since $v_{1}\neq1$ then $t\leq \dfrac{1}{v_{1}}<1$. Thus, we conclude that $t=0$. In this case, we obtain 
        $\hat{y}_{0}=m_{2}$, which implies $y_{0}=1$ and $y_{1}=0$. Hence, the unique solution for (3.2.24) is $(1,0,0,0,0).$ 
        
On the other hand, to solve Equation (3.2.25), consider the term  $z_{0}z_{1}^{a_{1}}$  of $f$, that leads to  $w_{0}+a_{1}w_{1}=d.$ 
If we add $w_{0}+(a_{1}-1)w_{1}$ to both sides of  Equation (3.2.25), we obtain the new Diophantine equation
        \begin{equation}           
        w_{0}\hat{y}_{0}+w_{1}\hat{y}_{1}+w_{2}x_{2}+w_{3}x_{3}+w_{4}x_{4}=d, 
        \end{equation}
        where $\hat{y}_{0}=y_{0}+1\geq1$ and $\hat{y}_{1}=y_{1}+a_{1}-1\geq a_{1}-1$. By the solutions given above, we can write $\hat{y}_{0}=m_{2}-tv_{1}$ and $\hat{y}_{1}=t$. Thus, we obtain
        $$\dfrac{m_{2}-1}{v_{1}}-1=a_{1}-1\leq t \leq \dfrac{m_{2}-1}{v_{1}}=\left\lfloor \dfrac{m_{2}}{v_{1}}\right\rfloor.$$
        We have
        \begin{itemize}
            \item If $t=\dfrac{m_{2}-1}{v_{1}}-1$, then $\hat{y}_{0}=1+v_{1}$ and $\hat{y}_{1}=\dfrac{m_{2}-1}{v_{1}}-1$. Thus, $y_{0}=v_{1}$ and $y_{1}=0$.
            \item If $t=\dfrac{m_{2}-1}{v_{1}}$, then $\hat{y}_{0}=1$ and $\hat{y}_{1}=\dfrac{m_{2}-1}{v_{1}}$. Thus, $y_{0}=0$ and $y_{1}=1$.
        \end{itemize}
        Thus, Equation (3.2.25) has two solutions.
        
        \item[(c)]  We consider $f$ be an invertible polynomial of type III: 
         $$f=z_{1}z_{0}^{a_{0}}+z_{0}z_{1}^{a_{1}}+z_{4}z_{2}^{a_{2}}+z_{2}z_{3}^{a_{3}}+z_{3}z_{4}^{a_{4}},$$
        whose associated weight vector $\bf w$ does not admit polynomial of type II.
        To solve (3.2.24), we add $a_{1}w_{1}$ to both sides of this equation. Since $w_{0}+a_{1}w_{1}=d$, we obtain
        \begin{equation}
            w_{0}y_{0}+w_{1}\hat{y}_{1}+w_{2}x_{2}+w_{3}x_{3}+w_{4}x_{4}=d
        \end{equation}
        where $\hat{y}_{1}=y_{1}+a_{1}\geq a_{1}$. By Lemmas 3.2.4 c) and 3.2.5, the solutions of (3.2.29) are in the set
        {\small{
        $$\qquad\left\{(a_{0}-tv_{1},1+tv_{0},0,0,0), (0,0,a_{2},0,1), (0,0,1,a_{3},0),(0,0,0,1,a_{4}), \mbox{ where }t\in\mathbb{Z}_{0}^{+}, t\leq \left\lfloor\dfrac{m_{2}}{v_{0}v_{1}}\right\rfloor \right\}$$
        }}
        Then $\hat{y}_{1}=1+tv_{0}\geq a_{1}$. Thus $t\geq \dfrac{a_{1}-1}{v_{0}}$. As $w_{0}+a_{1}w_{1}=d$, we have that $v_{0}+a_{1}v_{1}=m_{2}$. We obtain 
        $$t\geq\dfrac{a_{1}-1}{v_{0}}=\dfrac{m_{2}}{v_{0}v_{1}}-\dfrac{1}{v_{0}}-\dfrac{1}{v_{1}}=\left\lfloor\dfrac{m_{2}}{v_{0}v_{1}}\right\rfloor.$$
        So $t=\left\lfloor\dfrac{m_{2}}{v_{0}v_{1}}\right\rfloor$. Thus, Equation (3.2.29) has only one solution given by $(a_{0}-tv_{1},1+tv_{0},0,0,0).$ 
        Returning to Equation (3.2.24) (after subtracting the vector $(0, a_1, 0,0,0)$)  we obtain the vector $(a_{0}-tv_{1},1+tv_{0}-a_1,0,0,0)$ as solution. In this case, since 
        $t=\dfrac{m_{2}}{v_{0}v_{1}}-\dfrac{1}{v_{0}}-\dfrac{1}{v_{1}},$ it follows that the generator of  $H^{0}(\mathbb{P}(\textbf{w}),\mathcal{O}(w_{0}))$ is given by $\{z_0\}.$ 
        Solutions to Equation (3.2.25) can be found using a similar argument as  the one given above. It follows that the generator of  $H^{0}(\mathbb{P}(\textbf{w}),\mathcal{O}(w_{1}))$ is given by $\{z_1\}.$
    \end{itemize}
\end{proof}
\end{lem}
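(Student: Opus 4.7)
The unifying strategy is to reduce each target equation of degree $w_0$ or $w_1$ to the degree-$d$ Diophantine equation (3.2.3), whose full solution set has already been catalogued in Lemmas 3.2.4, 3.2.5 and 3.2.6. The reduction is carried out by adding a suitable linear combination of $w_0$ and $w_1$ to both sides, chosen so that the shifted variables $\hat y_0, \hat y_1$ must exceed explicit lower bounds. These bounds carve out only a handful of the listed degree-$d$ solutions, which one then converts back to solutions of the original equation and reads off as generators.

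Before computing, I would fix the explicit form of $\mathbf w$ supplied by Lemma 3.2.1: in type I, $w_0=w_1=m_3$; in type II, $w_0=m_3$ and $w_1=m_3 v_1$ with $v_1\geq 2$; in type III, $w_0=m_3 v_0$ and $w_1=m_3 v_1$ with $v_0, v_1\geq 2$. I would also record the monomial relations forced by $f$: namely $a_0 w_0=a_1 w_1=d$ in type I (with $a_0=a_1=m_2$), and $w_0+a_1 w_1=d$ in types II and III. These are exactly the shifts that will convert the target equations into degree-$d$ equations.

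For equation (3.2.24), add $(m_2-1)w_0$ to both sides in types I and II, and $a_1 w_1$ in type III. The enlarged equation is of degree $d$, so by Lemma 3.2.4 its solutions split into a one-parameter family (indexed by an integer $t$) in the $z_0, z_1$ variables plus the three cycle-block solutions of Lemma 3.2.5. The latter are eliminated immediately because the positive shift forces $\hat y_0\geq m_2-1$ or $\hat y_1\geq a_1$, while the cycle-block solutions have $\hat y_0=\hat y_1=0$. In type I the bound $\hat y_0\geq m_2-1$ leaves $t\in\{m_2-1, m_2\}$, giving the two solutions $(1,0,0,0,0)$ and $(0,1,0,0,0)$. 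In type II the bound $\hat y_0=m_2-t v_1\geq m_2-1$ combined with $v_1\geq 2$ forces $t=0$, yielding the unique solution $(1,0,0,0,0)$. In type III the bound $\hat y_1=1+t v_0\geq a_1$ combined with the identity $(a_1-1)/v_0=m_2/(v_0 v_1)-1/v_0-1/v_1=\lfloor m_2/(v_0 v_1)\rfloor$ pins $t$ to a single value, giving one solution. Equation (3.2.25) is treated analogously by adding $w_0+(a_1-1)w_1$ in type II, which yields $\hat y_0\geq 1$ and $\hat y_1\geq a_1-1$ and admits exactly the two consecutive values $t=a_1-1, a_1$, producing the two generators $z_0^{v_1}$ and $z_1$; in type III the same analysis pins $t$ to a single integer and yields the single generator $z_1$.

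The main delicate point is verifying that the integer arithmetic in type III genuinely collapses the $t$-interval to one value. This requires both the well-formedness condition $\gcd(v_0, v_1)=1$ and the $\mathbb{Q}$-homology sphere hypothesis, which together ensure that $m_2/(v_0 v_1)-1/v_0-1/v_1$ is an integer, exactly as exploited in the proof of Lemma 3.2.4(c). Once this is in place, the counting and the identification of the generators from the monomial $z_0^{y_0} z_1^{y_1} z_2^{x_2} z_3^{x_3} z_4^{x_4}$ are immediate.
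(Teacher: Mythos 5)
Your proposal is correct and follows essentially the same route as the paper: the same shifts (adding $(m_2-1)w_0$ for equations of degree $w_0$ in types I and II, $a_1w_1$ in type III, and $w_0+(a_1-1)w_1$ for the degree-$w_1$ equation in type II) reduce each equation to the already-solved degree-$d$ equation, and the same lower bounds on $\hat y_0,\hat y_1$ isolate the same solutions and generators. The key arithmetic point you flag in type III, namely that $(a_1-1)/v_0=m_2/(v_0v_1)-1/v_0-1/v_1=\lfloor m_2/(v_0v_1)\rfloor$ collapses the $t$-interval to a point, is exactly the step the paper uses.
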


\begin{rem} We have the following comments.
    \begin{itemize}
        \item[a)] Notice that if $f$ is a polynomial of type II such that its associated weight vector $\bf w$ admits a polynomial of type I, then  part a) of the previous lemma above holds. Thus, 
        Equations (3.2.24) and (3.2.25) have $2$ solutions each one.
        \item[b)] If $f$ is a polynomial of type III whose associated weight vector $\bf w$ admits a polynomial of type II but not type I, then we can use  part b) of the previous lemma above. Thus,  
        Equations (3.2.24) and (3.2.25) have $1$ and $2$ solutions, respectively.
        \item[c)] Finally, if $f$ is a polynomial of type III whose associated weight vector admits polynomial of type I and II, then the number of solutions of the equations can be  obtained as in 
        a) of the previous lemma. That is, Equations (3.2.24) and (3.2.25)  both have  $2$ solutions.
    \end{itemize}
\end{rem}

Next, we will solve Equation (3.2.23) for the remaining cases: $i=2,3$ or $4$.

\begin{lem}
    Let $f$ be a polynomial of type I, II or III and  $\bf w$ the associated weight vector defined as in (3.2.1). Then the equation (3.2.23):    $$w_{0}y_{0}+w_{1}y_{1}+w_{2}x_{2}+w_{3}x_{3}+w_{4}x_{4}=w_{i}$$
    has a unique solution for each $i=2,3,4$. It follows that the set of generators of   $H^{0}(\mathbb{P}(\textbf{w}),\mathcal{O}(w_{i}))$ is given by $\{z_i\}$ for $i=2, 3,4.$
    \begin{proof}
        Since any polynomial $f$ of type I, II or III has the same block of cycle type   $$z_{4}z_{2}^{a_{2}}+z_{2}z_{3}^{a_{3}}+z_{3}z_{4}^{a_{4}},$$
        we can  work with any of these types. On the other hand, by the cyclic form of this block, it is enough to assume $w_{i}=w_{2}$. Thus, Equation (3.2.23) can be written as
        \begin{equation}           w_{0}y_{0}+w_{1}y_{1}+w_{2}x_{2}+w_{3}x_{3}+w_{4}x_{4}=w_{2} 
        \end{equation}
        Adding $a_{3}w_{3}$ to both sides of (3.2.30), we obtain $$w_{0}y_{0}+w_{1}y_{1}+w_{2}x_{2}+w_{3}\hat{x}_{3}+w_{4}x_{4}=d$$
        where  $\hat{x}_{3}=x_{3}+a_{3}\geq a_{3}$. Since $\hat{x}_{3}>0$, the solutions of the above equation are in the set
        $$\left\{ (0,0,a_{2},0,1), (0,0,1,a_{3},0),(0,0,0,1,a_{4}) \right\}.$$
        Since $a_{3}>1$, we have only one solution $(0,0,1,a_{3},0)$. From $\hat{x}_{3}=a_{3}$, we obtain $x_{3}=0$. Then the solution of Equation (3.2.30) is $(0,0,1,0,0)$. Similarly we obtain the solutions 
        $(0,0,0,1,0)$ and $(0,0,0,0,1)$ for $w_3$ and $w_4$ respectively. The last statement of the lemma follows for similar arguments as the ones given in the previous lemmas.
         \end{proof}
\end{lem}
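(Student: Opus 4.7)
The plan is to exploit the cyclic symmetry of the block $z_{4}z_{2}^{a_{2}}+z_{2}z_{3}^{a_{3}}+z_{3}z_{4}^{a_{4}}$, which is common to all three types (I, II, III). By the symmetry between the indices $2,3,4$ in this cycle, it suffices to handle the case $w_i = w_2$; the statements for $w_3$ and $w_4$ then follow by cyclically permuting the roles of the variables $z_2, z_3, z_4$.

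For $w_i = w_2$, the strategy is a \textit{degree boost}: since $w_2 + a_3 w_3 = d$, adding $a_3 w_3$ to both sides of \[ w_{0}y_{0}+w_{1}y_{1}+w_{2}x_{2}+w_{3}x_{3}+w_{4}x_{4}=w_{2} \] produces the Diophantine equation
\[ w_{0}y_{0}+w_{1}y_{1}+w_{2}x_{2}+w_{3}\hat{x}_{3}+w_{4}x_{4}=d, \]
with $\hat{x}_3 = x_3 + a_3 \geq a_3 \geq 2$. This reduces the problem to one already solved: I can now invoke Lemma 3.2.3 together with Lemmas 3.2.4 and 3.2.5 to enumerate all solutions of the degree $d$ equation. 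Since $\hat{x}_3 > 0$, Lemma 3.2.3 forces $y_0 = y_1 = 0$, so the candidate solutions lie in the cycle block $\{(0,0,a_2,0,1), (0,0,1,a_3,0), (0,0,0,1,a_4)\}$ given by Lemma 3.2.5.

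The constraint $\hat{x}_3 \geq a_3 \geq 2$ immediately eliminates the first and third candidates (which have $\hat{x}_3 = 0$ and $\hat{x}_3 = 1$ respectively), leaving only $(0,0,1,a_3,0)$. Unwinding the substitution $\hat{x}_3 = x_3 + a_3$ recovers the unique solution $(0,0,1,0,0)$ of the original equation, corresponding to the monomial $z_2$. The generator of $H^{0}(\mathbb{P}(\mathbf{w}),\mathcal{O}(w_{2}))$ is therefore $\{z_2\}$.

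For $w_3$ and $w_4$, the same argument works with the boost chosen from the relevant relation: adding $a_4 w_4$ to produce a degree $d$ equation with $\hat{x}_4 \geq a_4$ isolates $(0,0,0,1,0)$ for $w_3$, and adding $a_2 w_2$ to produce a degree $d$ equation with $\hat{x}_2 \geq a_2$ isolates $(0,0,0,0,1)$ for $w_4$. The main (minor) obstacle is just to verify that in each case the boosted exponent exceeds the exponents appearing in the other two cycle-block solutions, which holds because each $a_j \geq 2$; the split provided by Lemma 3.2.3 is what makes the Fermat/chain part irrelevant regardless of whether $f$ is of type I, II, or III.
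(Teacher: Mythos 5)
Your proposal is correct and follows essentially the same route as the paper: reduce to $w_2$ by the cyclic symmetry of the block $z_4z_2^{a_2}+z_2z_3^{a_3}+z_3z_4^{a_4}$, boost the equation to degree $d$ via $w_2+a_3w_3=d$, invoke the splitting Lemma 3.2.3 together with Lemma 3.2.5 to reduce to the three cycle-block solutions, and use $\hat{x}_3\geq a_3\geq 2$ to isolate $(0,0,1,a_3,0)$. Your explicit treatment of the $w_3$ and $w_4$ cases (with boosts $a_4w_4$ and $a_2w_2$) is just a spelled-out version of the paper's ``similarly'' step.
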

In the same vein as Proposition 3.1.1 we give the following result.  

\begin{prop} The group $G(\mathbf{w})$ of complex  automorphisms of the graded ring $S(\mathbf{w})$ with  
$\operatorname{Proj}(S(\mathbf{w}))$ can be defined on generators for polynomials of type I, II and III thanks  to Lemmas 3.2.6  and  3.2.7 and  we can describe the  moduli of the corresponding orbifold as before. Indeed, let  $\alpha_i, \beta_1\in \mathbb C^*$ and $\mathbb{A}\in GL(2, \mathbb C),$ then we have   
 \begin{itemize}
\item    If $f$ is a polynomial of type I, then $G(\mathbf{w})$ is given on  generators by  
$$
\varphi_{\mathrm{w}}\left(\begin{array}{l}
z_0 \\
z_1 \\
z_2 \\
z_3 \\
z_{4}
\end{array}\right)=\left(\begin{array}{c}
\mathbb{A}\left(\begin{array}{l}
     z_{0}  \\
     z_{1} 
\end{array}\right)
\\
\alpha_2 z_2 \\
\alpha_3 z_3\\
\alpha_4 z_4
\end{array}\right)$$
and  the  moduli of the orbifold $Z_f$  is included in  the quotient
$$Span \left\{z_1^{m_2}, z_0 z_1^{m_2-1}, \ldots, z_0^{m_2-1} z_1, z_0^{m_2}, z_4 z_2^{a_2}, z_2 z_3^{a_3}, z_3 z_4^{a_4}\right\} / G(\mathbf{w}).$$ 

\item If $f$ is a polynomial of type II and its associated weight vector $\bf w$ does not admit polynomial of type I, then $G(\mathbf{w})$ is given on  generators by 
 $$
\varphi_{\mathrm{w}}\left(\begin{array}{l}
z_0 \\
z_1 \\
z_2 \\
z_3 \\
z_{4}
\end{array}\right)=\left(\begin{array}{c}
\alpha_0 z_0  \\
\alpha_1 z_1 +\beta_{1} z_{0}^{v_{1}}\\
\alpha_2 z_2 \\
\alpha_3 z_3\\
\alpha_4 z_4
\end{array}\right)
$$
and  the  moduli of the orbifold $Z_f$   is included in the quotient 
$$Span \left \{z_{0}^{m_{2}-kv_{1}}z_{1}^{k},   z_{4}z_{2}^{a_{2}},   z_{2}z_{3}^{a_{3}},   z_{3}z_{4}^{a_{4}},\mbox{ where }  0\leq k  \leq \left\lfloor \dfrac{m_{2}}{v_{1}}\right \rfloor \right \} / G(\mathbf{w}) .$$

\item Finally, If $f$ is a polynomial of type III and its associated weight vector $\bf w$ does not admit polynomial of type II, then $G(\mathbf{w})$ is given on  generators by 
$$
\varphi_{\mathrm{w}}\left(\begin{array}{l}
z_0 \\
z_1 \\
z_2 \\
z_3 \\
z_{4}
\end{array}\right)=\left(\begin{array}{c}
\alpha_0 z_0  \\
\alpha_1 z_1 \\
\alpha_2 z_2 \\
\alpha_3 z_3\\
\alpha_4 z_4
\end{array}\right)
$$ 
and the  moduli of the orbifold $Z_f$    is included  in the quotient 
$$Span \left \{z_{0}^{a_0-kv_{1}}z_{1}^{1+kv_0},   z_{4}z_{2}^{a_{2}},   z_{2}z_{3}^{a_{3}},   z_{3}z_{4}^{a_{4}}, \mbox{ where }  0\leq k  \leq \left\lfloor \dfrac{m_{2}}{v_{0}v_{1}}\right \rfloor\right \} / G(\mathbf{w}) .$$ 

\end{itemize}
\end{prop}
\hfill$\square$
\smallskip

For the local moduli of Sasaki-Einstein structures for rational homology spheres we can say that this is non-trivial, more precisely we have: 

\begin{prop}
 Let $f$ be an invertible polynomial of type I, II or III, whose associated weight vector is $\textbf{w} =(w_{0},w_{1},w_{2},w_{3},w_{4})$ which is defined as in (3.2.1) with degree  $d=m_{2}m_{3}$. 
 Then the complex dimension $\mu$ of the local moduli of the orbifold $Z_f=\{ f=0\}/\mathbb{C}^{*}(\textbf{w})$ is given as follows
\begin{itemize}
    \item[a)]  If $f$ is of type I, then   $\mu=m_{2}-3$.
    \item[b)]  If $f$ is of type II and its associated weight vector $\bf w$ does not admit a polynomial of type I, then $\mu=\dfrac{m_{2}}{v_{1}}-\dfrac{1}{v_{1}}-2$ .
    \item[c)]   If $f$ is of type III and its associated weight vector $\bf w$ does not admit a polynomial of type II, then $\mu=\dfrac{m_{2}}{v_{0}v_{1}}-\dfrac{1}{v_{0}}-\dfrac{1}{v_{1}}-1$.
\end{itemize}
All these quantities are equivalent to
$$\mu=\dfrac{m_{2}}{v_{0}v_{1}}-\dfrac{1}{v_{0}}-\dfrac{1}{v_{1}}-1.$$ 
Moreover,  the generators of the spaces   
$H^{0}(\mathbb{P}(\textbf{w}),\mathcal{O}(d))$  and $H^{0}(\mathbb{P}(\mathbf{w}),\mathcal{O}(w_{i}))$ for all $i=0, \ldots , 4$ are given in the previous lemmas of this subsection.  Additionally, if $f$ belongs to one of the 236 rational homology spheres admitting Sasaki-Einstein metrics found in \cite{BGN2} and \cite{CL},  then the real dimension of the local moduli of Sasaki-Einstein metrics of rational homology 7-spheres at $L_f$ equals $2\left [ \dfrac{m_{2}}{v_{0}v_{1}}-\dfrac{1}{v_{0}}-\dfrac{1}{v_{1}}-1\right ]$.  
\end{prop}
    \begin{proof} Let us prove these statements case by case:
        \begin{itemize}
            \item[a)] If $f$ is of type I,  from Lemmas 3.2.4 and 3.2.5 we know $\dim_{\mathbb C } H^{0}(\mathbb{P}(\textbf{w}),\mathcal{O}(d))=m_{2}+4$.  Also, from Lemmas 3.2.6 and   3.2.7, we have $$\sum_{i=0}^{4}\dim_{\mathbb C } H^{0}(\mathbb{P}(\textbf{w}),\mathcal{O}(w_{i}))=7.$$
            Thus 
            $$\mu =\dim_{\mathbb C } H^{0}(\mathbb{P}(\textbf{w}),\mathcal{O}(d))-\sum_{i}\dim_{\mathbb C } H^{0}(\mathbb{P}(\mathbf{w}),\mathcal{O}(w_{i}))=m_{2}-3.$$
            
            \item[b)] If $f$ is of type II with associated weight vector $\bf w$  not admitting a polynomial of type I, from Lemmas 3.2.4 and 3.2.5  we know  
            $\dim_{\mathbb C } H^{0}(\mathbb{P}(\textbf{w}),\mathcal{O}(d))=\dfrac{m_{2}}{v_{1}}-\dfrac{1}{v_{1}}+4.$  On the other hand,  Lemmas 3.2.6 and 3.2.7, establish the equality  $$\sum_{i=0}^{4}\dim_{\mathbb C } H^{0}(\mathbb{P}(\textbf{w}),\mathcal{O}(w_{i}))=6.$$
            Thus
            $$\dim_{\mathbb C } H^{0}(\mathbb{P}(\textbf{w}),\mathcal{O}(d))-\sum_{i}\dim_{\mathbb C } H^{0}(\mathbb{P}(\mathbf{w}),\mathcal{O}(w_{i}))=\dfrac{m_{2}}{v_{1}}-\dfrac{1}{v_{1}}-2.$$
            
            \item[c)] If $f$ is of type III and its associated weight vector $\bf w$ does not admit a polynomial of type II, then we obtain from Lemmas 3.2.4 and 3.2.5 that $$\dim_{\mathbb C } H^{0}(\mathbb{P}(\textbf{w}),d)=\dfrac{m_{2}}{v_{0}v_{1}}-\dfrac{1}{v_{0}}-\dfrac{1}{v_{1}}+4.$$ Moreover, from  Lemmas 3.2.6 and 3.2.7, we have $$\sum_{i=0}^{4}\dim_{\mathbb C } H^{0}(\mathbb{P}(\textbf{w}),\mathcal{O}(w_{i}))=5.$$
            Thus, 
            $$\dim_{\mathbb C } H^{0}(\mathbb{P}(\textbf{w}),\mathcal{O}(d))-\sum_{i}\dim_{\mathbb C } H^{0}(\mathbb{P}(\mathbf{w}),\mathcal{O}(w_{i}))=\dfrac{m_{2}}{v_{0}v_{1}}-\dfrac{1}{v_{0}}-\dfrac{1}{v_{1}}-1.$$
            \end{itemize}

As we notice that for any of the cases mentioned above, $\mu$ is equivalent to
        \begin{equation*}
   \dim_{\mathbb C } H^{0}(\mathbb{P}(\textbf{w}),\mathcal{O}(d))-\sum_{i}\dim_{\mathbb C } H^{0}(\mathbb{P}(\mathbf{w}),\mathcal{O}(w_{i}))= \dfrac{m_{2}}{v_{0}v_{1}}-\dfrac{1}{v_{0}}-\dfrac{1}{v_{1}}-1.
\end{equation*}
The last statement then follows from Theorem 2.2.2    

    \end{proof}

\begin{example} Let us consider the following examples, where all the polynomials are  taken from the list of Johnson and Koll\'ar of anticanonically embedded Fano K\"ahler-Einstein 3-folds.

\begin{enumerate}
    \item The polynomial $f=z_{0}^{9}+z_{1}^{9}+z_{4}z_{2}^{2}+z_{2}z_{3}^{2}+z_{3}z_{4}^{19}$ of type I has associated weight vector $\textbf{w}=(77,77,333,180,27)$ and degree $d=m_{3}m_{2}=693$, where $m_{3}=77$ and $m_{2}=9$. The generating set of $H^{0}(\mathbb{P}(\textbf{w}),\mathcal{O}(d))$ is given by the set of thirteen monomials $$\{z_{0}^9, z_{0}^{8}z_{1},\dots, z_{0}z_{1}^{8},z_{1}^{9}, z_{4}z_{2}^{2},  z_{2}z_{3}^{2}, z_{3}z_{4}^{19}\}.$$
    Then the moduli of the orbifold $Z_{f}$  is included in 
    $$Span \left \{z_{0}^9, z_{0}^{8}z_{1},\dots, z_{0}z_{1}^{8},z_{1}^{9}, z_{4}z_{2}^{2},  z_{2}z_{3}^{2}, z_{3}z_{4}^{19}\right \} / G(\mathbf{w}) .$$ 
    where $G(\textbf{w})$ is expressed as above. Thus the complex  dimension of this moduli is six and the corresponding link $L_f$ has a local moduli of Sasaki-Einstein metrics of real dimension twelve.
    
    \item The polynomial $f=z_{0}^{125}+z_{0}z_{1}^{4}+z_{4}z_{2}^{2}+z_{2}z_{3}^{7}+z_{3}z_{4}^{3}$ of type II with degree $d=m_{3}m_{2}=5375$, where $m_{3}=43$ and $m_{2}=125$, and with associated weight vector is $\textbf{w}=(43,1333,1875,500,1625)$. We notice that $\bf w$ does not admit polynomials of type I. Since $v_{1}=31$, then we have that the generating set of $H^{0}(\mathbb{P}(\textbf{w}),\mathcal{O}(d))$ is given by $\left(\left\lfloor \frac{125}{31}\right\rfloor+1\right)+3=8$ monomials. Indeed, the generating set is given by 
    $$\{z_{0}^{125}, z_{0}^{94}z_{1}, z_{0}^{63}z_{1}^{2}, z_{0}^{32}z_{1}^{3}, z_{0}z_{1}^{4}, z_{4}z_{2}^{2}, z_{2}z_{3}^{7}, z_{3}z_{4}^{3}\}$$
    The moduli of the orbifold $Z_{f}$ is obtained in a similar way as above. Thus the complex  dimension of this moduli is two and the corresponding link $L_f$ has a local moduli of Sasaki-Einstein metrics of real dimension four.
    \item The polynomial  $f=z_{1}z_{0}^{5}+z_{0}z_{1}^{15}+z_{4}z_{2}^{2}+z_{2}z_{3}^{4}+z_{3}z_{4}^{4}$ of type III, with associated weight vector $\textbf{w}=(231,66,481,185,259)$ and degree $d=m_{3}m_{2}=1221$, where $m_{3}=33$ and $m_{2}=37$. Here, the weight vector $\bf w$ does not admit polynomials type I or II. Moreover, as $v_{0}=7$ and $v_{1}=2$, then  the generating set of $H^{0}(\mathbb{P}(\textbf{w}),\mathcal{O}(d))$ is formed by $\left(\left\lfloor \frac{37}{(7)(2)}\right\rfloor+1\right)+3=6$ given by 
    $$\{z_{0}^{5}z_{1}, z_{0}^{3}z_{1}^{8}, z_{0}z_{1}^{15}, z_{4}z_{2}^{2}, z_{2}z_{3}^{4}, z_{3}z_{4}^{4}\}.$$ 
It follows that the  the complex  dimension of this moduli is one and the corresponding link $L_f$ has a local moduli of Sasaki-Einstein metrics of real dimension two.
    
\end{enumerate}
\end{example}

\smallskip

\subsection{Rational homology sphere: the Berglund-H\"ubsch dual of  chain-cycle polynomials}

As mentioned before, in \cite{CGL} it was proven that the  Berglund-H\"ubsch transpose rule only produces twins for singularities of cycle type, type I and type III. Actually these types are preserved under the Berglund-H\"ubsch transpose: 

\begin{itemize}
\item Type I  polynomials are sent to type I polynomials.
\item Type III polynomials are sent to type III polynomials, and moreover 
 \item $m_2$, $v_0$ and $v_1$ are invariant under the Berglund-H\"ubsch rule. 
 \end{itemize}
So  the  the moduli of orbifolds $Z_{f^T}$ determined by Berglund-H\"ubsch transpose dual $f^T$ of $f$ for cycle polynomials  and polynomials of type I and type III have moduli described  by Propositions 3.2.2 and 3.1.1. From Proposition 3.2.2 the real dimension  $\mu_{\mathbb R}$ of the local moduli for links arising from polynomials of  type I and type III are given by  $\mu_{\mathbb R}=2[\frac{m_2}{v_0v_1}-\frac{1}{v_0}-\frac{1}{v_1}-1]$, and from Proposition 3.1.2  
$\mu_{\mathbb R}=0$ in case the singularity is given by a cycle type polynomial given in Subsection 3.1.  
Thus we only need to study polynomials of type II, that is polynomials of chain-cycle type, where  the Berglund-H\"ubsch transpose rule does not preserve neither torsion nor Milnor number, and hence $m_2$, $v_0$ and $v_1$ vary. We will also assume that the index $I=|\bf w|-d$ equals 1, as done in \cite{CGL}.
Since our weighted varieties produce Berglund-H\"ubsch duals embedded in non well-formed weighted projective spaces, in principal our procedure will give only upper bounds for the dimension of the moduli, however in most cases we found that these bounds are zero.  
\medskip

Let  
\begin{equation}
z_0^{a_0}+z_0 z_1^{a_1}+z_4 z_2^{a_2}+z_2 z_3^{a_3}+z_3 z_4^{a_4},
\end{equation}
 where its associated weight vector $\bf w$ satisfies condition  (3.2.1). The exponential matrix of $f$ is given by
\begin{equation}
       A_{f}= \begin{bmatrix}
          a_{0} & 0 & 0 & 0 & 0  \\
      1 & a_{1} & 0 & 0 & 0 \\
      0 & 0 & a_{2} & 0 & 1 \\
      0 & 0 & 1 & a_{3} & 0 \\
      0 &  0 & 0 & 1 & a_{4}   
        \end{bmatrix}
        \end{equation}
Applying the Berglund-H\"ubsch transpose rule, one obtains the matrix
\begin{equation*}
       A^{T}_{f}= \begin{bmatrix}
          a_{0} & 1 & 0 & 0 & 0  \\
      0 & a_{1} & 0 & 0 & 0 \\
      0 & 0 & a_{2} & 1 & 0 \\
      0 & 0 & 0 & a_{3} & 1 \\
      0 &  0 & 1 & 0 & a_{4}   
        \end{bmatrix}
        \end{equation*}
which has associated invertible polynomial
\begin{equation}    f^T=z_{0}^{a_{0}}z_{1}+z_{1}^{a_{1}}+z_{3}z_{2}^{a_{2}}+z_{4}z_{3}^{a_{3}}+z_{2}z_{4}^{a_{4}},
\end{equation}
In \cite{CGL}, the associated weight vector $\tilde{\bf w}$ to $f^T$ was obtained: 
$$\tilde{\bf w} = (m_{3}v_{1}(a_{1}-1),m_{3}m_{2}v_{1},m_{2}(m_{2}-1)\tilde{v}_{2},m_{2}(m_{2}-1)\tilde{v}_{3},m_{2}(m_{2}-1)\tilde{v}_{4}),$$
where the degree is $\tilde{d}= m_{3}m_{2}(m_{2}-1)$ and 
\begin{equation}
\tilde{v}_{2}=a_{3}a_{4}-a_{4}+1, \ \ \ \tilde{v}_{3}=a_{2}a_{4}-a_{2}+1 \ \ \mbox{ and } \ \ \tilde{v}_{4}=a_{2}a_{3}-a_{3}+1.
\end{equation}
Also, since $m_{2}-1=a_{1}v_{1}$, we can simplify $\tilde{\bf w}$ and obtain 
\begin{equation}
    \tilde{\bf w} = (m_{3}(a_{1}-1),m_{3}m_{2},m_{2}a_{1}\tilde{v}_{2},m_{2}a_{1}\tilde{v}_{3},m_{2}a_{1}\tilde{v}_{4}),
\end{equation}
where the degree is $\tilde{d}=m_{3}m_{2}a_{1}$.

The complex dimension of the moduli for $Z_{f^T}$ is bounded from below by 
 
\begin{equation}
   \dim_{\mathbb C} H^{0}(\mathbb{P}(\tilde{\bf w}),\mathcal{O}(\tilde{d}))-\sum_{i}\dim_{\mathbb C} H^{0}(\mathbb{P}(\tilde{\bf w}),\mathcal{O}(\tilde{w}_{i}))
\end{equation}
We begin computing $\dim_{\mathbb C} H^{0}(\mathbb{P}(\tilde{\bf w}),\mathcal{O}(\tilde{d}))$. Since this refers to the number of all possible monomials  $z_{0}^{x_{0}}z_{1}^{x_{1}}z_{2}^{x_{2}}z_{3}^{x_{3}}z_{4}^{x_{4}}$ of degree $\tilde{d}$, we calculate this number counting the solutions of the Diophantine equation
\begin{equation}
    \tilde{w}_{0}x_{0}+\tilde{w}_{1}x_{1}+\tilde{w}_{2}x_{2}+\tilde{w}_{3}x_{3}+\tilde{w}_{4}x_{4}=\tilde{d}
\end{equation}
where the unknowns $x_{i}$ are non-negative integers such that at least one of them is non-zero. We will solve this equation adding certain mild constraints on the exponent $a_1$ and $m_3$: either $\gcd(a_{1},m_{3})=1$ and $a_{1}>2$ or $a_{1}=2.$ 
As we will see at the end of this section, the remaining cases of interest can be computed case by case.

\begin{lem}
    Let $f^T$ be an invertible polynomial described as in (3.3.3) with an associated weight vector $\tilde{\bf w}$ given as in (3.3.5). If we add the additional conditions 
    $\gcd(a_{1},m_{3})=1$ and $a_{1}>2$, then the equation (3.3.7) has exactly five solutions. Moreover, the set of generators of  
    $H^{0}(\mathbb{P}(\textbf{w}),\mathcal{O}(\tilde{d}))$ is given by the set $$\{z_0^{m_2}z_1, z_1^{a_1}, z_3z_2^{a_2}, {z_4} z_3^{a_3}, z_2z_4^{a_4}\}.$$
    \begin{proof}
        Using the expression given in (3.3.5), we can write the Diophantine equation (3.3.7) as
        \begin{equation}
            m_{3}(a_{1}-1)x_{0}+m_{3}m_{2}x_{1}+m_{2}a_{1}\tilde{v}_{2}x_{2}+m_{2}a_{1}\tilde{v}_{3}x_{3}+m_{2}a_{1}\tilde{v}_{4}x_{4}=m_{3}m_{2}a_{1},
        \end{equation}
        which we can rewrite as
        $$m_{3}( (a_{1}-1)x_{0}+m_{2}x_{1})=a_{1}m_{2}(m_{3}-\tilde{v}_{2}x_{2}-\tilde{v}_{3}x_{3}-\tilde{v}_{4}x_{4}).$$
        Since $\gcd(m_{2},m_{3})=1$, we obtain $m_{2} \mid (a_{1}-1)x_{0}+m_{2}x_{1}$. This implies that $m_{2} \mid (a_{1}-1)x_{0}$. 
        It is not difficult to obtain  $\gcd(m_{2},a_{1}-1)=1$ (see equation (3.4.11) in the proof of Theorem 3.1 in \cite{CGL}), so $m_{2} \mid x_{0}$. On the other hand, since $m_{3}(a_{1}-1)x_{0} \leq \tilde{d}=m_{3}m_{2}a_{1}$, we have 
        $$x_{0}\leq \dfrac{a_{1}m_{2}}{a_{1}-1}<2m_{2}.$$
         Since $m_{2} \mid x_{0}$, then $x_{0}=0$ or $x_{0}=m_{2}$.
            \begin{itemize}
                \item If $x_{0}=m_{2}$, then the equation (3.3.8) is equivalent to 
                $$m_{3}(a_{1}-1)m_{2}+m_{3}m_{2}x_{1}+m_{2}a_{1}\tilde{v}_{2}x_{2}+m_{2}a_{1}\tilde{v}_{3}x_{3}+m_{2}a_{1}\tilde{v}_{4}x_{4}=m_{3}m_{2}a_{1}$$
            
            Simplifying, we obtain
            $$a_{1}(\tilde{v}_{2}x_{2}+\tilde{v}_{3}x_{3}+\tilde{v}_{4}x_{4})=m_{3}(1-x_{1}).$$
            Since $a_{1}(\tilde{v}_{2}x_{2}+\tilde{v}_{3}x_{3}+\tilde{v}_{4}x_{4})\geq 0$, we have that $x_{1}=1$ or $x_{1}=0$. If $x_{1}=1$, then $\tilde{v}_{2}x_{2}+\tilde{v}_{3}x_{3}+\tilde{v}_{4}x_{4}=0$, which implies that $x_{2}=x_{3}=x_{4}=0$. Thus, we obtain a solution of (3.3.8), which is given by $(m_{2},1,0,0,0)$. On the other hand, if $x_{1}=0$, then $a_{1}(\tilde{v}_{2}x_{2}+\tilde{v}_{3}x_{3}+\tilde{v}_{4}x_{4})=m_{3}$, but $\gcd(a_{1},m_{3})=1$ and $a_{1}\neq 1$. Thus, in this case there is no solution.
            \item If $x_{0}=0$, then the equation (3.3.8) is given by   
            $$m_{3}m_{2}x_{1}+m_{2}a_{1}\tilde{v}_{2}x_{2}+m_{2}a_{1}\tilde{v}_{3}x_{3}+m_{2}a_{1}\tilde{v}_{4}x_{4}=m_{3}m_{2}a_{1}.$$
            Simplifying, we obtain
            $$m_{3}x_{1}=a_{1}(m_{3}-\tilde{v}_{2}x_{2}-\tilde{v}_{3}x_{3}-\tilde{v}_{4}x_{4}).$$
            Since $\gcd(a_{1},m_{3})=1$, then we have $m_{3} \mid (m_{3}-\tilde{v}_{2}x_{2}-\tilde{v}_{3}x_{3}-\tilde{v}_{4}x_{4})$. That is, the expression 
            $\tilde{v}_{2}x_{2}+\tilde{v}_{3}x_{3}+\tilde{v}_{4}x_{4}$ can assume two values: $m_{3}$ or $0$. 
            First, we suppose that $\tilde{v}_{2}x_{2}+\tilde{v}_{3}x_{3}+\tilde{v}_{4}x_{4}=m_{3}$. This forces $x_{1}=0$. Moreover, since  $\tilde{v}_{2}$, $\tilde{v}_{3}$ and $\tilde{v}_{4}$ are given in (3.3.4) and these describe the weight vector $\tilde{\bf w}$  in (3.3.5), then the equation $\tilde{v}_{2}x_{2}+\tilde{v}_{3}x_{3}+\tilde{v}_{4}x_{4}=m_{3}$ has exactly three solutions: $(a_{2},1,0), (0,a_{3},1)$ and $(1,0,a_{4})$. Thus, we obtain three solutions for the equation (3.3.8): 
            $(0,0,a_{2},1,0), (0,0,0,a_{3},1)$ and $(0,0,1,0,a_{4})$. Finally, if we suppose that $\tilde{v}_{2}x_{2}+\tilde{v}_{3}x_{3}+\tilde{v}_{4}x_{4}=0$, then  $x_{1}=a_{1}$. In this case the only solution is $(0,a_{1},0,0,0)$.
            \end{itemize}
            Summarizing, when $a_{1}>2$, we obtain five solutions for Equation (3.3.8):
            $$(m_{2},1,0,0,0), (0,0,a_{2},1,0), (0,0,0,a_{3},1), (0,0,1,0,a_{4}) \mbox { and } (0,a_{1},0,0,0).$$
    \end{proof}
\end{lem}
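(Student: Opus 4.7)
The plan is to substitute the explicit form of $\tilde{\bf w}=(m_{3}(a_{1}-1),m_{3}m_{2},m_{2}a_{1}\tilde{v}_{2},m_{2}a_{1}\tilde{v}_{3},m_{2}a_{1}\tilde{v}_{4})$ and $\tilde{d}=m_{3}m_{2}a_{1}$ into the Diophantine equation and exploit the fact that the coefficients split cleanly into a ``$m_{3}$-part'' (the variables $x_{0},x_{1}$) and an ``$m_{2}a_{1}$-part'' (the variables $x_{2},x_{3},x_{4}$). After rearranging, the equation reads
\begin{equation*}
m_{3}\bigl[(a_{1}-1)x_{0}+m_{2}x_{1}\bigr]=m_{2}a_{1}\bigl[m_{3}-\tilde{v}_{2}x_{2}-\tilde{v}_{3}x_{3}-\tilde{v}_{4}x_{4}\bigr].
\end{equation*}
The coprimality $\gcd(m_{2},m_{3})=1$ forces $m_{2}\mid (a_{1}-1)x_{0}$, and since $\gcd(m_{2},a_{1}-1)=1$ (a fact extracted from equation (3.4.11) of \cite{CGL}) we conclude $m_{2}\mid x_{0}$. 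Combining with the size bound $x_{0}\leq\tilde{d}/(m_{3}(a_{1}-1))=m_{2}a_{1}/(a_{1}-1)<2m_{2}$, which uses precisely the hypothesis $a_{1}>2$, we reduce to the two cases $x_{0}=0$ and $x_{0}=m_{2}$.

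In the case $x_{0}=m_{2}$, dividing out $m_{2}$ leaves $a_{1}(\tilde{v}_{2}x_{2}+\tilde{v}_{3}x_{3}+\tilde{v}_{4}x_{4})=m_{3}(1-x_{1})$. Non-negativity forces $x_{1}\in\{0,1\}$; the value $x_{1}=0$ is killed by $\gcd(a_{1},m_{3})=1$ together with $a_{1}>1$ (which prevents $a_{1}$ from dividing $m_{3}$), while $x_{1}=1$ forces $x_{2}=x_{3}=x_{4}=0$, producing the solution $(m_{2},1,0,0,0)$. In the case $x_{0}=0$, dividing by $m_{2}$ gives $m_{3}(a_{1}-x_{1})=a_{1}(\tilde{v}_{2}x_{2}+\tilde{v}_{3}x_{3}+\tilde{v}_{4}x_{4})$; again $\gcd(a_{1},m_{3})=1$ forces $a_{1}\mid x_{1}$, and the range $0\leq x_{1}\leq a_{1}$ yields $x_{1}\in\{0,a_{1}\}$. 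The branch $x_{1}=a_{1}$ gives $(0,a_{1},0,0,0)$, and the branch $x_{1}=0$ reduces the problem to the purely cyclic Diophantine equation
\begin{equation*}
\tilde{v}_{2}x_{2}+\tilde{v}_{3}x_{3}+\tilde{v}_{4}x_{4}=m_{3}.
\end{equation*}

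The last step is to recognize that this cyclic equation is structurally identical to equation (3.2.15) from Subsection 3.2 (with $\tilde{v}_{i}$ in place of $v_{i}$ and the same cycle-type relations $\tilde{v}_{4}+a_{2}\tilde{v}_{2}=m_{3}$, etc., visible in (3.3.4)), and so Lemma 3.2.5 applies verbatim to give exactly three solutions $(a_{2},1,0)$, $(0,a_{3},1)$, $(1,0,a_{4})$. Collecting all five gives the claimed monomial generators of $H^{0}(\mathbb{P}(\tilde{\mathbf{w}}),\mathcal{O}(\tilde{d}))$. The main obstacle I foresee is not the case-splitting itself but verifying cleanly that the two arithmetic hypotheses $\gcd(a_{1},m_{3})=1$ and $a_{1}>2$ are both genuinely needed and sufficient: the first is essential for killing the spurious branch $x_{1}=0$ in the $x_{0}=m_{2}$ case, while the second is exactly what makes the inequality $x_{0}<2m_{2}$ strict; identifying why the borderline case $a_{1}=2$ would add extra solutions (and hence must be handled separately, as the paragraph after the lemma states) is the most delicate point of the argument.
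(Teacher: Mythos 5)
Your proposal is correct and follows essentially the same route as the paper: the same rearrangement into an $m_3$-part and an $m_2a_1$-part, the same deduction $m_2\mid x_0$ via $\gcd(m_2,a_1-1)=1$, the same bound $x_0<2m_2$ from $a_1>2$, and the same case analysis on $x_0\in\{0,m_2\}$. The only cosmetic difference is that in the $x_0=0$ case you extract $a_1\mid x_1$ where the paper extracts $m_3\mid(m_3-\tilde v_2x_2-\tilde v_3x_3-\tilde v_4x_4)$ — two equivalent uses of $\gcd(a_1,m_3)=1$ — and you explicitly route the residual cyclic equation through Lemma 3.2.5 (which transfers after a cyclic relabeling of the $\tilde v_i$), where the paper asserts the three solutions directly.
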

\begin{prop} Let $f$ be and invertible polynomial as in (3.3.1) with weights satisfying conditions (3.2.1). If its Berglund-H\"ubsch transpose 
     $f^{T}$ with associated weight vector $\bf \tilde w$ satisfies the conditions given in  Lemma 3.3.1, then the complex dimension of the local moduli of the orbifold $Z_{f^T}=\{f^T=0\}/\mathbb{C}^{*}(\bf {\tilde{w}})$ is zero.  Moreover, each $H^{0}(\mathbb{P}(\tilde{\bf w}),\mathcal{O}(\tilde{w}_{i}))$ is generated by $z_i$ for $i=0, \ldots , 4.$
\end{prop}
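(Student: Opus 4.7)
The plan is to apply Theorem 2.2.1 to the orbifold $Z_{f^{T}}$, reducing the computation of the complex dimension of the local moduli to evaluating formula (3.3.6). Lemma 3.3.1 already supplies $\dim H^{0}(\mathbb{P}(\tilde{\bf w}),\mathcal{O}(\tilde{d}))=5$, so the task splits into two parts: checking that $\dim\mathfrak{Aut}(Z_{f^{T}})=0$ and proving that $\dim H^{0}(\mathbb{P}(\tilde{\bf w}),\mathcal{O}(\tilde{w}_{i}))=1$ with generator $z_{i}$ for each $i\in\{0,\ldots,4\}$. The first is a direct application of the criterion used throughout Section 3: since $\gcd(m_{2},m_{3})=1$ and $a_{1}>2$ by assumption, no weight $\tilde{w}_{i}$ equals $\tilde{d}/2$, so $f^{T}$ admits no block of the form $z_{i}^{2}+z_{j}^{2}$, and \cite{BGK} gives the vanishing of the automorphism group.

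For each fixed $i$, the count $\dim H^{0}(\mathbb{P}(\tilde{\bf w}),\mathcal{O}(\tilde{w}_{i}))$ equals the number of nonnegative integer solutions, not all zero, of
$$
\tilde{w}_{0}x_{0}+\tilde{w}_{1}x_{1}+\tilde{w}_{2}x_{2}+\tilde{w}_{3}x_{3}+\tilde{w}_{4}x_{4}=\tilde{w}_{i}.
$$
I would employ the device of Lemma 3.1.3 and Lemma 3.2.8: pick one of the degree-$\tilde{d}$ relations coming from $f^{T}$, namely $m_{2}\tilde{w}_{0}+\tilde{w}_{1}=\tilde{d}$, $a_{1}\tilde{w}_{1}=\tilde{d}$, $a_{2}\tilde{w}_{2}+\tilde{w}_{3}=\tilde{d}$, $a_{3}\tilde{w}_{3}+\tilde{w}_{4}=\tilde{d}$, or $a_{4}\tilde{w}_{4}+\tilde{w}_{2}=\tilde{d}$, and add the appropriate complement to both sides so that the right-hand side becomes $\tilde{d}$. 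This turns the problem into selecting, among the five degree-$\tilde{d}$ solutions
$$
(m_{2},1,0,0,0),\ (0,a_{1},0,0,0),\ (0,0,a_{2},1,0),\ (0,0,0,a_{3},1),\ (0,0,1,0,a_{4})
$$
produced by Lemma 3.3.1, those whose shifted coordinates meet the induced lower bounds. For $i=0$, adding $(m_{2}-1)\tilde{w}_{0}+\tilde{w}_{1}$ forces $\hat{x}_{0}\geq m_{2}-1$ and $\hat{x}_{1}\geq 1$, selecting only $(m_{2},1,0,0,0)$; undoing the shift yields $(1,0,0,0,0)$. The cases $i=2,3,4$ are analogous and single out $(0,0,a_{2},1,0)$, $(0,0,0,a_{3},1)$, $(0,0,1,0,a_{4})$ respectively.

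The subtlest case is $i=1$: adding $(a_{1}-1)\tilde{w}_{1}$ imposes $\hat{x}_{1}\geq a_{1}-1$, a threshold lying strictly between $1$ and $a_{1}$. It is exactly here that the hypothesis $a_{1}>2$ enters to discard the candidate $(m_{2},1,0,0,0)$ and retain only $(0,a_{1},0,0,0)$, producing the solution $(0,1,0,0,0)$. Assembling the five unit counts and substituting into (3.3.6) gives $5-5=0$, which is the claimed complex dimension. The accompanying identification of generators is read off from the unique solutions obtained above, so each $H^{0}(\mathbb{P}(\tilde{\bf w}),\mathcal{O}(\tilde{w}_{i}))$ is spanned by $z_{i}$.
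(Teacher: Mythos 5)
Your proposal is correct, but it takes a genuinely different (and more explicit) route than the paper. The paper's own proof is a two-line sandwich argument: each $z_i$ is trivially a section of $\mathcal{O}(\tilde{w}_i)$, so $\sum_{i}\dim H^{0}(\mathbb{P}(\tilde{\bf w}),\mathcal{O}(\tilde{w}_{i}))\geq 5$, and this sum is also bounded above by $\dim H^{0}(\mathbb{P}(\tilde{\bf w}),\mathcal{O}(\tilde{d}))=5$; equality forces each summand to equal $1$ and the moduli dimension to vanish. You instead solve the five Diophantine equations $\sum_j \tilde{w}_j x_j=\tilde{w}_i$ one by one via the shifting device of Lemmas 3.1.3 and 3.2.8, using the degree-$\tilde{d}$ relations $m_2\tilde{w}_0+\tilde{w}_1=\tilde{d}$, $a_1\tilde{w}_1=\tilde{d}$, etc., and the classification of degree-$\tilde{d}$ monomials from Lemma 3.3.1. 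Your computations check out (in particular, $(0,a_1,0,0,0)$ is eliminated for $i=0$ because $\hat{x}_0=0<m_2-1$ with $m_2-1=a_1v_1\geq 3$, and the candidate $(m_2,1,0,0,0)$ for $i=1$ is eliminated precisely because $a_1>2$, which is exactly the dividing line with Proposition 3.3.2). What your approach buys is that it directly exhibits the unique monomial generator $z_i$ of each $H^{0}(\mathbb{P}(\tilde{\bf w}),\mathcal{O}(\tilde{w}_{i}))$ and makes transparent where the hypothesis $a_1>2$ is used; what the paper's argument buys is brevity, at the cost of leaving the upper bound $\sum_i\dim H^{0}(\mathbb{P}(\tilde{\bf w}),\mathcal{O}(\tilde{w}_{i}))\leq 5$ essentially unjustified in the proof itself (it rests on nonnegativity of the moduli dimension, or on an injection of the kind you carry out explicitly). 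Your treatment of $\dim\mathfrak{Aut}(Z_{f^T})=0$ matches the criterion the paper invokes elsewhere in Section 3 and is at the same level of rigor.
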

\begin{proof}
    This result follows from Lemma 3.3.1 and the fact that  $\dim_{\mathbb C} H^{0}(\mathbb{P}(\tilde{\bf w}),\mathcal{O}(\tilde{w}_{i}))\geq 1$  for each $i$ and $\sum_{i=0}^4\dim_{\mathbb C}H^{0}(\mathbb{P}(\tilde{\bf w}),\mathcal{O}(\tilde{w}_{i}))\leq 5.$
\end{proof}
\medskip

\begin{lem}
    Let $f^T$ be an invertible polynomial described in (3.3.3) with associated weight vector $\tilde{\bf w}$ as in (3.3.5). If we add the additional condition $a_{1}=2$, then Equation (3.3.7) has six solutions. 
    Moreover, the set of generators of  
    $H^{0}(\mathbb{P}(\textbf{w}),\mathcal{O}(\tilde{d}))$ is given by the set 
    $$\{z_0^{m_2}z_1, z_0^{2m_2},  z_1^{2}, z_3z_2^{a_2}, z_4{z_3}^{a_3}, z_2z_4^{a_4}\}.$$

    \begin{proof}
    When $a_{1}=2$, Equation (3.3.7) gives 
    \begin{equation}  
    m_{3}x_{0}+m_{3}m_{2}x_{1}+2m_{2}\tilde{v}_{2}x_{2}+2m_{2}\tilde{v}_{3}x_{3}+2m_{2}\tilde{v}_{4}x_{4}=2m_{3}m_{2},
    \end{equation}
    which can be written as
    $$m_{3}(x_{0}+m_{2}x_{1})=2m_{2}(m_{3}-\tilde{v}_{2}x_{2}-\tilde{v}_{3}x_{3}-\tilde{v}_{4}x_{4}).$$
    As $\gcd(m_{2},m_{3})=1$, then $m_{2}\mid (x_{0}+m_{2}x_{1})$. This implies that $m_{2}\mid x_{0}$. Moreover, from (3.3.9) we have $m_{3}x_{0}\leq 2m_{3}m_{2}$. Since $m_{3}\neq 0$, then $x_{0}\leq 2m_{2}$. Thus, $x_{0}$ can take three values: $0$, $m_{2}$ or $2m_{2}$.
    
    Now, we consider two situations: $m_{3}$ is odd or $m_{3}$ is even.
    \begin{itemize}
        \item[a)] When $m_{3}$ is odd. As $\gcd(a_{1},m_{3})=1$, then the method to find the solutions is similar to what we have done for $x_{0}=0$ or $x_{0}=m_{2}$ in the Lemma 3.3.1. For the additional case $x_{0}=2m_{2}$, we have $$2m_{3}m_{2}+m_{3}m_{2}x_{1}+2m_{2}\tilde{v}_{2}x_{2}+2m_{2}\tilde{v}_{3}x_{3}+2m_{2}\tilde{v}_{4}x_{4}=2m_{3}m_{2}$$
            which has solution $x_{1}=x_{2}=x_{3}=x_{4}=0$. So in this case, we add the solution: $(2m_{2},0,0,0,0)$. Thus, we have exactly six solutions.
        \item[b)] When $m_{3}$ is even. Here, we can write $m_{3}=2m_{3}'$. Replacing this in Equation (3.3.9) and simplifying, we obtain
        \begin{equation}  
        m_{3}'x_{0}+m_{3}'m_{2}x_{1}+m_{2}\tilde{v}_{2}x_{2}+m_{2}\tilde{v}_{3}x_{3}+m_{2}\tilde{v}_{4}x_{4}=2m_{3}'m_{2}.
        \end{equation}
        This equation can be written as
        $$m_{3}'(x_{0}+m_{2}x_{1})=m_{2}(2m_{3}'-\tilde{v}_{2}x_{2}-\tilde{v}_{3}x_{3}-\tilde{v}_{4}x_{4}).$$
        By a similar argument used above, we obtain $m_{2} \mid x_{0}$ and $x_{0}\leq 2m_{2}$. This implies that $x_{0}=0$, $x_{0}=m_{2}$ or 
        $x_{0}=2m_{2}$.
        \begin{itemize}
            \item If $x_{0}=0$, then Equation (3.3.10)  can be written as 
            \begin{equation}  
            m_{3}'m_{2}x_{1}+m_{2}\tilde{v}_{2}x_{2}+m_{2}\tilde{v}_{3}x_{3}+m_{2}\tilde{v}_{4}x_{4}=2m_{3}'m_{2}.
            \end{equation}
            Simplifying the expression above, we arrive at the equality
            $$\tilde{v}_{2}x_{2}+\tilde{v}_{3}x_{3}+\tilde{v}_{4}x_{4}=m_{3}'(2-x_{1}).$$
            We notice that $x_{1}$ can assume three values: $x_{1}=0$, $x_{1}=1$ or $x_{1}=2$.
            \begin{itemize}
                \item When $x_{1}=0$, the equation above is
                $$\tilde{v}_{2}x_{2}+\tilde{v}_{3}x_{3}+\tilde{v}_{4}x_{4}=2m_{3}'=m_{3}.$$
                This equation has exactly three solutions: $(a_{2},1,0), (0,a_{3},1)$ and $(1,0,a_{4})$. Thus, the solutions of Equation (3.3.9) are $(0,0,a_{2},1,0), (0,0,0,a_{3},1)$ and $(0,0,1,0,a_{4})$.
                \item When $x_{1}=1$, Equation (3.3.11) can be  simplified: 
                \begin{equation}
                    \tilde{v}_{2}x_{2}+\tilde{v}_{3}x_{3}+\tilde{v}_{4}x_{4}=m_{3}'
                \end{equation}
                If there exists a solution $(\hat{x}_{2},\hat{x}_{3},\hat{x}_{4})$ of this Diophantine equation, then $(2\hat{x}_{2},2\hat{x}_{3},2\hat{x}_{4})$, with $\hat{x}_{i}\in\mathbb{Z}_{0}^{+}$, is solution of the equation
                $$\tilde{v}_{2}x_{2}+\tilde{v}_{3}x_{3}+\tilde{v}_{4}x_{4}=2m_{3}'=m_{3}.$$
                Nevertheless, the solutions of this last equation are $(a_{2},1,0), (0,a_{3},1)$ and $(1,0,a_{4})$, which implies that $2\hat{x}_{i}=1$ for any $i$. That is, $\hat{x}_{i}$ is not integer, which is not possible.
                \item When $x_{1}=2$, the equation is  
                $$\tilde{v}_{2}x_{2}+\tilde{v}_{3}x_{3}+\tilde{v}_{4}x_{4}=0,$$
                which has trivial solution. Therefore, the solution of Equation (3.3.9) is  $(0,2,0,0,0)$.
            \end{itemize}
          \item  If $x_{0}=m_{2}$, then Equation (3.3.10) can be reduced  to
          \begin{equation}
              m_{3}'+m_{3}'x_{1}+\tilde{v}_{2}x_{2}+\tilde{v}_{3}x_{3}+\tilde{v}_{4}x_{4}=2m_{3}'
          \end{equation}
          The above equation can be written as
          $$\tilde{v}_{2}x_{2}+\tilde{v}_{3}x_{3}+\tilde{v}_{4}x_{4}=m_{3}'(1-x_{1}).$$
          Then $x_{1}$ can take two values: $x_{1}=0$ or $x_{1}=1$.
          \begin{itemize}
              \item If $x_{1}=0$, then we obtain Equation (3.3.12), which has no solution. 
              \item If $x_{1}=1$, we obtain the equation 
              $$\tilde{v}_{2}x_{2}+\tilde{v}_{3}x_{3}+\tilde{v}_{4}x_{4}=0,$$
                which has trivial solution. In this case the solution obtained is  $(m_{2},1,0,0,0)$.
          \end{itemize}
          \item If $x_{0}=2m_{2}$. Here, Equation (3.3.10) can be reduced to
          $$m_{3}'m_{2}x_{1}+m_{2}\tilde{v}_{2}x_{2}+m_{2}\tilde{v}_{3}x_{3}+m_{2}\tilde{v}_{4}x_{4}=0$$
          which has trivial solution. In this case, the solution of Equation (3.3.9) is given by $(2m_{2},0,0,0,0)$.       
           \end{itemize}
          In view of the foregoing discussion, we find six solutions for Equation (3.3.9):
          $$ (0,0,a_{2},1,0), (0,0,0,a_{3},1), (0,0,1,0,a_{4}), (0,2,0,0,0), (m_{2},1,0,0,0), \mbox { and } (2m_{2},0,0,0,0).$$
    \end{itemize}
\end{proof}
\end{lem}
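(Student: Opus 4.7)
The plan is to recycle as much of the analysis in Lemma 3.3.1 as possible, treating the specialization $a_1=2$ as an edge case where the coprimality hypothesis $\gcd(a_1,m_3)=1$ may fail. With $a_1=2$ the transposed weight vector reads $\tilde{\bf w}=(m_3,\,m_3m_2,\,2m_2\tilde v_2,\,2m_2\tilde v_3,\,2m_2\tilde v_4)$ and the degree is $\tilde d=2m_2m_3$, so (3.3.7) becomes
\[
m_3 x_0 + m_3 m_2 x_1 + 2 m_2 \tilde v_2 x_2 + 2 m_2 \tilde v_3 x_3 + 2 m_2 \tilde v_4 x_4 = 2 m_2 m_3.
\]
Rearranging as $m_3(x_0+m_2 x_1)=2m_2(m_3-\tilde v_2 x_2-\tilde v_3 x_3-\tilde v_4 x_4)$ and using $\gcd(m_2,m_3)=1$ forces $m_2\mid x_0$, and the nonnegativity bound $m_3 x_0\leq 2m_2 m_3$ then restricts $x_0$ to $\{0,m_2,2m_2\}$. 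The new case $x_0=2m_2$ compared with Lemma 3.3.1 is exactly what produces the extra generator $z_0^{2m_2}$.

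Next I would split according to the parity of $m_3$. When $m_3$ is odd, $\gcd(a_1,m_3)=\gcd(2,m_3)=1$, so the treatment of $x_0\in\{0,m_2\}$ from Lemma 3.3.1 applies verbatim and delivers the five solutions $(m_2,1,0,0,0),(0,2,0,0,0),(0,0,a_2,1,0),(0,0,0,a_3,1),(0,0,1,0,a_4)$; substituting $x_0=2m_2$ collapses the remaining terms to $0$ and contributes the sixth solution $(2m_2,0,0,0,0)$.

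When $m_3$ is even, write $m_3=2m_3'$ and divide out a factor of $2$ to obtain
\[
m_3' x_0 + m_3' m_2 x_1 + m_2 \tilde v_2 x_2 + m_2 \tilde v_3 x_3 + m_2 \tilde v_4 x_4 = 2 m_3' m_2.
\]
Running the same case split on $x_0\in\{0,m_2,2m_2\}$ and on $x_1$ reduces each subcase to either a trivial equation or to $\tilde v_2 x_2+\tilde v_3 x_3+\tilde v_4 x_4\in\{0,\,m_3',\,m_3\}$. By Lemma 3.2.5 the equation with right-hand side $m_3$ has precisely the three solutions $(a_2,1,0),(0,a_3,1),(1,0,a_4)$, so most subcases are immediate and must match what the odd case yielded.

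The main obstacle is ruling out any nonnegative integer solution of the auxiliary equation
\[
\tilde v_2 x_2 + \tilde v_3 x_3 + \tilde v_4 x_4 = m_3',
\]
which a priori could introduce spurious solutions when $m_3$ is even. I would argue that any such $(\hat x_2,\hat x_3,\hat x_4)$ would, upon doubling, yield a nonnegative solution $(2\hat x_2,2\hat x_3,2\hat x_4)$ of the $m_3$-equation, but each of the three solutions $(a_2,1,0),(0,a_3,1),(1,0,a_4)$ listed above contains a coordinate equal to $1$, which cannot be written as $2\hat x_i$ for an integer $\hat x_i$. This contradiction closes the even case and shows that, regardless of parity, exactly the six solutions stated occur; via the standard exponent-to-monomial bijection they correspond to $z_0^{m_2}z_1$, $z_0^{2m_2}$, $z_1^{2}$, $z_3 z_2^{a_2}$, $z_4 z_3^{a_3}$, $z_2 z_4^{a_4}$, which is exactly the generating set claimed in the statement.
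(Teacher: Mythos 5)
Your proposal is correct and follows essentially the same route as the paper's own proof: the same rearrangement forcing $m_2\mid x_0$ with $x_0\in\{0,m_2,2m_2\}$, the same parity split on $m_3$, and the identical doubling argument (using that each solution of the $m_3$-equation has a coordinate equal to $1$) to rule out the auxiliary equation with right-hand side $m_3'$.
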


\begin{prop}
Let $f$ be and invertible polynomial as in (3.3.1) with weights satisfying conditions (3.2.1). If its Berglund-H\"ubsch transpose 
     $f^{T}$ with associated weight vector $\bf \tilde w$ satisfies the conditions given in  Lemma 3.3.2.  Then the complex dimension of the  moduli of the orbifold $Z_{f^T}=\{f^T=0\}/\mathbb{C}^{*}(\bf {\tilde{w}})$ is zero.  Moreover, $H^{0}(\mathbb{P}(\tilde{\bf w}),\mathcal{O}(\tilde{w}_{1}))$ is generated by $z_0^{m_2}$ and $z_1$  while $H^{0}(\mathbb{P}(\tilde{\bf w}),\mathcal{O}(\tilde{w}_{i}))$ is generated by $z_i,$ for $i\not =1.$  
\end{prop}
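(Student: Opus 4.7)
The plan is to mirror the reasoning of Proposition 3.3.1, but now using the six explicit solutions of the degree-$\tilde{d}$ equation produced by Lemma 3.3.2 (rather than the five of Lemma 3.3.1). Since $a_1 = 2$ forces $\tilde{w}_0 = m_3$, $\tilde{w}_1 = m_3 m_2$ and $\tilde{d} = 2 m_3 m_2$, the generators of $H^{0}(\mathbb{P}(\tilde{\bf w}),\mathcal{O}(\tilde{d}))$ are the six quasi-smooth monomials
$$z_0^{m_2} z_1,\; z_0^{2 m_2},\; z_1^{2},\; z_3 z_2^{a_2},\; z_4 z_3^{a_3},\; z_2 z_4^{a_4},$$
so $\dim H^{0}(\mathbb{P}(\tilde{\bf w}),\mathcal{O}(\tilde{d})) = 6$. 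It then remains to compute $\sum_i \dim H^{0}(\mathbb{P}(\tilde{\bf w}),\mathcal{O}(\tilde{w}_i))$ and apply formula (3.3.6).

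For each $i$ I will resolve the Diophantine equation $\tilde{w}_0 x_0 + \cdots + \tilde{w}_4 x_4 = \tilde{w}_i$ by adding a suitable multiple of a single weight to both sides, converting it into the degree-$\tilde{d}$ equation equipped with a lower-bound inequality on one coordinate; the candidate solutions are then drawn from the six listed in Lemma 3.3.2. For $i=2$, add $a_4 \tilde{w}_4$ (using $a_4 \tilde{w}_4 + \tilde{w}_2 = \tilde{d}$ from the cycle block $z_3 z_2^{a_2} + z_4 z_3^{a_3} + z_2 z_4^{a_4}$ of $f^T$); the constraint $\hat{x}_4 \geq a_4 \geq 2$ isolates only $(0,0,1,0,a_4)$, yielding generator $z_2$. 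The cases $i=3$ and $i=4$ are handled analogously via $a_2 \tilde{w}_2 + \tilde{w}_3 = \tilde{d}$ and $a_3 \tilde{w}_3 + \tilde{w}_4 = \tilde{d}$, producing generators $z_3$ and $z_4$. For $i=0$, add $(2m_2 - 1)\tilde{w}_0$; the constraint $\hat{x}_0 \geq 2m_2 - 1$ is satisfied among the six degree solutions only by $(2m_2,0,0,0,0)$, giving generator $z_0$.

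The crux is the case $i = 1$. Since $\tilde{d} = 2 \tilde{w}_1$, adding $\tilde{w}_1$ to both sides only requires $\hat{x}_1 \geq 1$; this weak constraint admits two of the six degree solutions, namely $(0,2,0,0,0)$ and $(m_2,1,0,0,0)$, producing the two distinct generators $z_1$ and $z_0^{m_2}$. This is the only index where the shift trick leaves more than one candidate, and it is precisely here that the extra generator $z_0^{m_2}$ arises, reflecting the arithmetic relation $\tilde{w}_1 = m_2 \tilde{w}_0$ forced by $a_1 = 2$.

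Assembling, $\sum_{i=0}^{4} \dim H^{0}(\mathbb{P}(\tilde{\bf w}),\mathcal{O}(\tilde{w}_i)) = 1 + 2 + 1 + 1 + 1 = 6 = \dim H^{0}(\mathbb{P}(\tilde{\bf w}),\mathcal{O}(\tilde{d}))$, so by formula (3.3.6) the complex dimension of the local moduli of $Z_{f^T}$ is zero. The main technical care will be in checking that each lower-bound inequality really does isolate exactly the claimed degree solution(s); this uses the standing hypotheses $a_j \geq 2$ on the cycle block together with the explicit enumeration in Lemma 3.3.2, and is what makes the $i=1$ case genuinely produce two generators rather than one.
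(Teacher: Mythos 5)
Your proposal is correct, and it reaches the conclusion by a route that differs from the paper's. You compute each $\dim H^{0}(\mathbb{P}(\tilde{\bf w}),\mathcal{O}(\tilde{w}_{i}))$ exactly via the shift trick (adding $a_4\tilde{w}_4$, $a_2\tilde{w}_2$, $a_3\tilde{w}_3$, $(2m_2-1)\tilde{w}_0$, $\tilde{w}_1$ respectively to reduce to the degree-$\tilde{d}$ equation with a one-coordinate lower bound), and your case checks all go through: for $i=2,3,4$ the constraints $\hat{x}_j\geq a_j\geq 2$ isolate one solution each; for $i=0$ the bound $\hat{x}_0\geq 2m_2-1$ excludes $(m_2,1,0,0,0)$ because $m_2-1=2v_1\geq 2$ gives $m_2<2m_2-1$; and for $i=1$ exactly two of the six solutions survive, yielding $z_1$ and $z_0^{m_2}$. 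The paper instead takes a shortcut: it only exhibits the two solutions $(m_2,0,0,0,0)$ and $(0,1,0,0,0)$ to get $h^0(\mathcal{O}(\tilde{w}_1))\geq 2$, uses the trivial bound $h^0(\mathcal{O}(\tilde{w}_i))\geq 1$ for $i\neq 1$, and then sandwiches $6\leq\sum_i h^0(\mathcal{O}(\tilde{w}_i))\leq h^0(\mathcal{O}(\tilde{d}))=6$, where the upper bound rests on the nonnegativity of the moduli dimension in formula (3.3.6). Your direct computation is longer but self-contained (it does not invoke that a priori inequality) and it actually verifies, rather than infers, the stated generating sets $\{z_0^{m_2},z_1\}$ and $\{z_i\}$; it is also more consistent in style with Lemmas 3.1.3, 3.2.6 and 3.2.7. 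Both arguments are sound.
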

\begin{proof}
    First, we will prove that $\dim_{\mathbb C} H^{0}(\mathbb{P}(\tilde{\bf w}),\mathcal{O}(\tilde{w}_{1}))\geq2$. In fact, it is equivalent to show that the quantity of solutions of the following Diophantine equation is not less that two:
    $$\tilde{w}_{0}x_{0}+\tilde{w}_{1}x_{1}+\tilde{w}_{2}x_{2}+\tilde{w}_{3}x_{3}+\tilde{w}_{4}x_{4}=\tilde{w}_{1}.$$  
    Using the expression for $\bf \tilde{w}$ given in (3.3.5) and replacing $a_{1}=2$ in the equation above, we arrive to the equation
    $$m_{3}x_{0}+m_{3}m_{2}x_{1}+2m_{2}\tilde{v}_{2}x_{2}+2m_{2}\tilde{v}_{2}x_{3}+2m_{2}\tilde{v}_{4}x_{4}=m_{3}m_{2}.$$
    Here, we can exhibit at least two solutions: $(m_{2},0,0,0,0)$ and $(0,1,0,0,0)$. Therefore, we obtain $\dim_{\mathbb C} H^{0}(\mathbb{P}(\tilde{\bf w}),\mathcal{O}(\tilde{w}_{1}))\geq2$. 
    
    On the other hand, we know that $\dim_{\mathbb C} H^{0}(\mathbb{P}(\tilde{\bf w}),\mathcal{O}(\tilde{w}_{i}))\geq1$ for $i\neq 1$. This implies that
    $$6\leq\sum_{i=0}^{4}\dim_{\mathbb C} H^{0}(\mathbb{P}(\tilde{\bf w}),\mathcal{O}(\tilde{w}_{i}))\leq \dim_{\mathbb C} H^{0}(\mathbb{P}(\tilde{\bf w}),\mathcal{O}(\tilde{d}))=6.$$
    From here, we can conclude that  $H^{0}(\mathbb{P}(\tilde{\bf w}),\mathcal{O}(\tilde{w}_{1}))$ has dimension 2 and  the set of generators is  $\{z_0^{m_2}, z_1\}$ and each 
    $H^{0}(\mathbb{P}(\tilde{\bf w}),\mathcal{O}(\tilde{w}_{i}))$ is one dimensional and it is generated by $z_i$ for $i\not=1.$

    $$\dim_{\mathbb C} H^{0}(\mathbb{P}(\tilde{\bf w}),\mathcal{O}(\tilde{d}))-\sum_{i=0}^{4}\dim_{\mathbb C} H^{0}(\mathbb{P}(\tilde{\bf w}),\mathcal{O}(\tilde{w}_{i}))=0.$$
\end{proof}

Next we discuss the moduli of the orbifold associated to the polynomials considered in this subsection.

\begin{prop}  For $\mathbb{P}(\mathbf{\tilde{w}})=\operatorname{Proj}(S(\mathbf{\tilde{w}}))$, the  complex  automorphisms of the graded ring $S(\mathbf{\tilde{w}})$ can be defined on generators for the two types of polynomials given in this subsection thanks  to Propositions  3.3.1  and  3.3.2. We can describe the  moduli of the corresponding orbifold, as done previously. Indeed, let  $\alpha_i, \beta_1\in \mathbb C^*,$  then we have   
 \begin{itemize}
\item    If $f$ is a polynomial as in Lemma 3.3.1, then $G(\mathbf{\tilde{w}})$ is given on  generators by  
$$
\varphi_{\mathrm{w}}\left(\begin{array}{l}
z_0 \\
z_1 \\
z_2 \\
z_3 \\
z_{4}
\end{array}\right)=\left(\begin{array}{c}
\alpha_0     z_{0}  \\
\alpha_1     z_{1} \\
\alpha_2 z_2 \\
\alpha_3 z_3\\
\alpha_4 z_4
\end{array}\right)$$
and  the  moduli of the orbifold $Z_{f^T}$  is included in the quotient
$$Span \{z_0^{m_2}z_1, z_1^{a_1}, z_3z_2^{a_2}, {z_4} z_3^{a_3}, z_2z_4^{a_4}\} / G(\mathbf{\tilde{w}})$$ a zero-dimensional quotient space. 

\item    If $f$ is a polynomial as in Lemma 3.3.2, then $G(\mathbf{\tilde{w}}))$ is given on  generators by  
$$
\varphi_{\mathrm{w}}\left(\begin{array}{l}
z_0 \\
z_1 \\
z_2 \\
z_3 \\
z_{4}
\end{array}\right)=\left(\begin{array}{c}
\alpha_0     z_{0}  \\
\alpha_1     z_{1} + \beta_1 z_0^{m_2} \\
\alpha_2 z_2 \\
\alpha_3 z_3\\
\alpha_4 z_4
\end{array}\right)$$
and  the  moduli of the orbifold $Z_{f^T}$  is included in the quotient
$$Span\left\{z_0^{m_2} z_1, z_0^{2 m_2}, z_1^2, z_3 z_2^{a_2}, z_4 z_3^{a_3}, z_2 z_4^{a_4}\right\} / G(\mathbf{\tilde{w}})$$ again  a zero-dimensional quotient space. 
\end{itemize}
\hfill$\square$
\end{prop}
\smallskip

In \cite{CGL}, Theorem 4.1 we established the existence of 75 new rational homology 7-spheres admitting Sasaki-Einstein
metrics with not well-formed quotients, that is, with some group elements having codimension 1 fixed point sets. All these links, which are  listed in a table in the Appendix in \cite{CGL}, are given by polynomials of chain-cycle type of type II with weights satisfying the conditions given in (3.2.1).  So we have: 

\begin{prop}
Let $f$ be and invertible polynomial as in (3.3.1) with weights satisfying conditions (3.2.1). If its Berglund-H\"ubsch transpose 
     $f^{T}$ with associated weight vector $\bf \tilde w$ satisfies the conditions given in either  Lemma 3.3.1 or Lemma 3.3.2 and additionally $f$ belongs to one of the 236 rational homology 7-spheres admitting 
     Sasaki-Einstein metrics found in \cite{BGN2} and \cite{CGL}, then the dimension of the local moduli space of Sasaki-Einstein metrics of the rational homology 7-spheres at  $L_{f^T}$ equals zero. 
     Thus rational homology 7-spheres given as links coming from polynomials $f^T$  as above do not admit inequivalent families of Sasaki-Einstein structures.
     \end{prop}
\begin{proof}
This result follows from Propositions 3.3.1 and 3.3.2 
\end{proof}

\begin{rem}
    From the list of 75 new examples of Sasaki-Einstein rational homology 7-sphere given in the Appendix of \cite{CGL}, there are 7  weight vectors $\bf \tilde{w}$ that do not satisfy the additional conditions of either  Lemma 3.3.1 or  Lemma  3.3.2, actually all of the elements of this table  satisfy $a_1>2$ but $\gcd(m_3, a_1)\not =1.$  For these, we have the following table where $b_{\mu_{\mathbb R}}$ denotes the upper bound for the real dimension of  the local moduli of Sasaki-Einstein metrics of rational homology 7-spheres at $L_{f^T}.$ 
\end{rem}
  
\begin{longtable}{| c | c | c | c | c | c| } \hline
$\bf \tilde{w}$ & $\tilde{d}$ & $m_{3}$ &   $\dim_{\mathbb C} H^{0}(\mathbb{P}(\tilde{\bf w}),\mathcal{O}(\tilde{d}))$   &  $\sum_{i=0}^{4}\dim_{\mathbb C} H^{0}(\mathbb{P}(\tilde{\bf w}),\mathcal{O}(\tilde{w}_{i}))$  & $b_{\mu_{\mathbb R}}$ \\ \hline \hline \endfirsthead
\hline
(177,295,270,370,70)  & 1180 & 118 & 5 & 5  & 0  \\  \hline

(52,663,867,1581,153) & 3315 & 65 & 9 & 6  & 6 \\ \hline

(148,777,987,1911,63) & 3885 & 185 & 9 & 6 & 6  \\ \hline

(86,3655,5185,595,1445) & 10965 & 129 & 6 & 5 & 2 \\ \hline

(86,3655,4165,2635,425) & 10965 & 129 & 6 & 5 & 2 \\ \hline

(438,4161,6175,133,1577) & 12483 & 657 & 6 & 5 & 2 \\ \hline

(438,4161,4693,3097,95) & 12483 & 657 & 6 & 5  & 2 \\ \hline
\end{longtable}

In contrast to the previous cases obtained by the Berglund-H\"ubsch rule,  with the exception of the first member of the above table, all the generating monomials of the local moduli are such that the weight vectors admit blocks of the form $z_{\alpha}^{n_{\alpha}}z_{\beta}^{n_{\beta}}$ or $z_{\alpha}^{n_{\alpha}}z_{\beta}^{n_{\beta}}z_{\lambda}$.  For instance,  the weight vector $\tilde{\textbf{w}}=(52,663,867,1581,153)$ determines as the set of generators of $H^{0}(\mathbb{P}(\tilde{\textbf{w}}),\mathcal{O}(\tilde{d}))$ the following
$$\{ z_{1}z_{0}^{51},z_{1}^{5},z_{1}^{3}z_{2}z_{4}^{3}, z_{1}^{2}z_{4}^{13}, z_{1}z_{2}^{2}z_{4}^{6}, z_{1}z_{3}z_{4}^7, z_{2}^2z_{3},z_{2}z_{4}^{16},z_{3}^2z_{4}\}.$$ The set  $H^{0}(\mathbb{P}(\tilde{\bf w}),\mathcal{O}(\tilde{w}_{i}))$ is spanned by the automorphisms 
$$\varphi_{\tilde{\mathrm{w}}}\left(\begin{array}{l}
z_0 \\
z_1 \\
z_2 \\
z_3 \\
z_{4}
\end{array}\right)=\left(\begin{array}{c}
\alpha_0 z_0  \\
\alpha_1 z_1 \\
\alpha_2 z_2 \\
\alpha_3 z_3+\beta_{3}z_{1}z_{4}^6\\
\alpha_4 z_4
\end{array}\right).
$$ 
Hence  the complex dimension of the moduli of the corresponding orbifold has three as  an upper 
bound. 
\medskip

\section{Application: links of non-isolated singularities and klt singularities}

The explicit description of the monomials generating the vector space $H^{0}(\mathbb P(\mathbf{w}),\mathcal{O}(d))$ yields arguments to  the study  of the  non-quasismooth polynomials generated by  monomials in this set. Then, the  weighted hypersurfaces determined by these sort of  polynomials can be considered as points in the  boundary of a compactification of the moduli of quasismooth polynomials since the  subset of all quasismooth elements is dense in the set of monomials generating  $H^0(\mathbb{P}(\mathbf{w}), \mathcal{O}(d)).$ Below, we explain via examples,  
 how to produce non-quasismooth hypersurfaces with klt singularities which are the  candidates to give rise to non-smooth links whose metric cones can be considered as some sort of degenerating Calabi-Yau cones \cite{LX, Od1}. 
For  precise definitions of the singularities of the minimal model appearing in this section and their possible relations see \cite{Ko3, KM}. In particular, at the end of page 42 in \cite{Ko3}, Kollár gives  conditions needed to ensure that canonical singularities are  klt, these conditions are  trivially  satisfied in our setting.

\medskip

First,  recall the notion of the Newton polyhedra: let us write the monomial  $x_0^{a_0} \cdots x_n^{a_n}$ as $\mathbf{x}^\mathbf{a}$, where $\mathbf{a}=\left(a_0, \ldots, a_n\right) \in \mathbb{Z}^{n+1}.$ Given a polynomial function 
$\displaystyle{f(x)=\sum_{\mathbf{a} \in \mathbb{Z}_{+}^{n+1}} c_{\mathbf{a}} \mathbf{x}^\mathbf{a}}$, then the support of $f$ is defined as  
$\operatorname{supp}(f):=\left\{\mathbf{a} \in \mathbb{Z}_{+}^{{n+1}} \mid c_{\mathbf{a}} \neq 0\right\}.$ The 
Newton polyhedron $\Gamma_{+}(f)$ of  $f$ is defined to be the convex hull of the following set 
$\displaystyle{\bigcup_{\mathbf{a} \in \operatorname{supp}(f)}\left(\mathbf{a}+\mathbb{R}_{\geq 0}^{n+1}\right)}$. 
For each face $\gamma$ of $\Gamma_{+}(f)$, we define the polynomial $f_\gamma$ as follows:
$$
f_\gamma=\sum_{\mathbf{a} \in \gamma} c_{\mathbf{a}} \mathbf{x}^{\mathbf{a}}.
$$
A power series $f$ is said to be Newton non-degenerate, if for every face $\gamma$ the equation $f_\gamma=0$ defines a hypersurface smooth in the complement of the hypersurface $x_0 \cdots x_n=0$.
\medskip

Now we state the following criterion which we will use below, following \cite{To}, to determine whether the singularity is klt. See \cite{IP} for a proof of the following lemma.
\begin{lem}
    Let $S\subset\mathbb{C}^{n+1}$ be a normal hypersurface defined as the set of zeros of a Newton non-degenerate polynomial $f.$  If the point $(1,1,\dots,1)$ is in the interior of the Newton polyhedron $\Gamma_{+}(f)$, then $S$ has canonical singularities.
\end{lem}
\hfill$\square$
\medskip

\begin{itemize}[leftmargin=0pt]
\item Now, consider Example 3.2.1 (1):  the polynomial of type I given by  $$f=z_{0}^{9}+z_{1}^{9}+z_{4}z_{2}^{2}+z_{2}z_{3}^{2}+z_{3}z_{4}^{19},$$  has associated weight vector $\textbf{w}=(77,77,333,180,27)$ and degree $d=693,$ which determines a well-formed Fano  K\"ahler-Einstein 3-fold with at worst cyclic singularities \cite{JK}. The set  $H^{0}(\mathbb{P}(\textbf{w}),\mathcal{O}(d))$ is given by the set of thirteen monomials $$\{z_{0}^9, z_{0}^{8}z_{1},\dots, z_{0}z_{1}^{8},z_{1}^{9}, z_{4}z_{2}^{2},  z_{2}z_{3}^{2}, z_{3}z_{4}^{19}\}.$$
 One can select a certain subset of these monomials and consider  
 the non-quasismooth hypersurface $$X_0:z_{0}^{9}+z_{0}^{2}z_{1}^{7}+z_{4}z_{2}^{2}+z_{2}z_{3}^{2}+z_{3}z_{4}^{19}=0$$ 
 which belongs to the family of weighted hypersurfaces 
 $$X_t:z_{0}^{9}+(1-t)z_{0}^{2}z_{1}^{7}+tz_{1}^9+z_{4}z_{2}^{2}+z_{2}z_{3}^{2}+z_{3}z_{4}^{19}=0\subset \mathbb{P}(77,77,333,180,27)\times \mathbb{C}.$$ From Lemma 2.1.1, it follows  that $X_t$ is quasismooth for all $t\not =0.$  As we show below, the subvariety $X_0$ is klt. Indeed, since the number of monomials defining $X_0$ is equal to the number of variables, any linear combination of these monomials with nonzero coefficients defines a hypersurface isomorphic to $X_0.$ Thus we can take $X_0$ to be a general divisor in the  linear system defined by the monomials of the defining equation.  The base locus of this linear system is contained in the following set of points  
    \begin{align*}
            B=\bigl\{ & [0:0:1:0:0], [0:0:0:1:0], [0:0:0:0:1], [0:1:1:0:0],[0:1:0:1:0],\\ & \hspace{4cm} [0:1:0:0:1]
             , [0:1:0:0:0]  \bigr\}
        \end{align*}
        By Bertini's Theorem on $\mathbb C^5-\{0\}$, we know that $X_0$ is quasismooth outside these points. Moreover, from setting the gradient of $f$ equal to  zero, we find  that $X_0$ is quasismooth at all  elements of $B$ except at the point $[0:1:0:0:0]$. Actually  $X_0$ is klt at $[0:1:0:0:0]$. As mentioned above, a general linear combination of the monomials defining  $X_0$, determines a hypersurface   $$\tilde{X_0}=c_0z_{0}^{9}+c_1z_{0}^{2}z_{1}^{7}+c_2z_{4}z_{2}^{2}+c_3z_{2}z_{3}^{2}+c_4z_{3}z_{4}^{19}=0$$ isomorphic to $X_0$ for ${c_i}'s$ general complex numbers different than zero. So we will show that a general hypersurface $\tilde{X_0}$ is klt at $[0:1:0:0:0].$ In the affine chart $z_1\not =0$ 
        we take $z_{1}=1$ and locally $\tilde{X_0}$ is the quotient of the  hypersurface $$S: c_0z_{0}^{9}+c_1z_{0}^{2}+c_2z_{4}z_{2}^{2}+c_3z_{2}z_{3}^{2}+c_4z_{3}z_{4}^{19}=0$$ in $\mathbb{C}^4$ by the group $\mathbb Z_{77}.$ 
         Since klt is a property  preserved by finite quotients (\cite{Ko3}, Corollary 2.43), it suffices to show that such the general hypersurface $S\subset \mathbb{C}^4$ has canonical singularities.  
Clearly, $S$ is normal (it is a hypersurface and its singular set has codimension at least 2).  
Also, since the coefficients of the monomials in this example are taken to be general, the Newton non-degeneracy is satisfied since the base locus of any collection of the monomials in the equation $c_0z_{0}^{9}+c_1z_{0}^{2}+c_2z_{4}z_{2}^{2}+c_3z_{2}z_{3}^{2}+c_4z_{3}z_{4}^{19}=0$ is contained in the hypersurface $x_0\cdots x_n=0$. So we can apply the criterion given above to determine whether the singularity is klt: the singularity in  $S$ is canonical if the point $(1,1,1,1)$ is in the interior of the Newton polyhedron $\Gamma_{+}(g)$ generated by support of the polynomial $g$ defining $\tilde{X_0}$, that is, generated by the set  
$$\operatorname{Supp}(g)=\bigl\{(9,0,0,0),(2,0,0,0),(0,2,0,1),(0,1,2,0),(0,0,1,19)\bigr\}.$$ 
         We notice that it is enough to show that there exists some point in the Newton polyhedron $\Gamma_{+}(g)$ with all its entries less than 1 . For this, we consider the following point
        $$P_{0}=\dfrac{43}{539}(9,0,0,0)+\dfrac{76}{539}(2,0,0,0)+\dfrac{20}{77}(0,2,0,1)+\dfrac{37}{77}(0,1,2,0)+\dfrac{3}{77}(0,0,1,19)$$
        which has all its entries equal to 1. Thus, the point $(1,1,1,1)$ is in the interior of  $\Gamma_{+}(f).$ We conclude that $X_0$ is klt.
\smallskip

\item Similar arguments can be used to study the non-quasismooth  hypersurfaces for  the two remaining types of polynomials producing  non-trivial moduli. 
For instance, consider  Example 3.2.1 (2): the  polynomial  $$f=z_0^{125}+z_0 z_1^4+z_4 z_2^2+z_2 z_3^7+z_3 z_4^3$$  of type II with weight vector  
$\boldsymbol{w}=(43,1333,1875,500,1625)$ and degree $d=5375$ which determines a well-formed Fano  K\"ahler-Einstein 3-fold with at worst cyclic singularities \cite{JK}. The set   $H^0(\mathbb{P}(\boldsymbol{w}), \mathcal{O}(d))$ is  given by 
$$
\left\{z_0^{125}, z_0^{94} z_1, z_0^{63} z_1^2, z_0^{32} z_1^3, z_0 z_1^4, z_4 z_2^2, z_2 z_3^7, z_3 z_4^3\right\}.
$$
From this set of generators, we consider  the family 
$$X_t: (1-t)z_{0}^{63}z_{1}^{2}+ z_{0}z_{1}^{4} +tz_0^{125}+z_{4}z_{2}^{2}+z_{2}z_{3}^7+z_{3}z_{4}^{3}=0\subset \mathbb{P}(43,1333,1875,500,1625)\times \mathbb{C}$$
with central fiber  $$X_0:z_{0}^{63}z_{1}^{2}+z_{0}z_{1}^{4}+z_{4}z_{2}^{2}+z_{2}z_{3}^7+z_{3}z_{4}^{3}=0,$$  and, using Lemma 2.1.1, with $X_t$ quasismooth for all $t\not=0.$  
One  can verify that  $X_0$ is either smooth or quasismooth in all points  except at the point $[1:0:0:0:0]$. With identical arguments as the ones used above, one can show that the weighted subvariety is klt at the point $[1:0:0:0:0]$.
\medskip

\item Of course, one also can describe non-quasismooth hypersurfaces from orbifolds not belonging to the list of anticanonically embedded K\"ahler-Einstein Fano 3-folds given by Johnson and Koll\'ar \cite{JK}, that is, where the index corresponding to the quasismooth weighted variety has index $I>1.$  
For instance,  consider the polynomial  $$f=z_{0}^{5}+z_{1}^{5}+z_{4}z_{2}^{3}+z_{2}z_{3}^{3}+z_{3}z_{4}^{9}$$ which determines a weighted hypersurface $X$  with  weight vector $\textbf{w}=(82,82,125,95,35)$ and degree $d=410$. We notice that in this case the index is $I=9$. Since $Id=9(410)<\frac{4}{3}(82)(35)$, it follows from  Theorem 2.1.1 that  the weighted hypersurface $X\subset\mathbb{P}(\textbf{w})$ admits a Kähler-Einstein orbifold metric and hence  the link $L_{f}$  admits a Sasaki-Einstein metric as well.  By Proposition 3.2.2, the  set of monomials generating  $H^{0}(\mathbb{P}(\textbf{w}),\mathcal{O}(d))$ is given by
    $$\{z_{0}^5, z_{0}^{4}z_{1},\dots, z_{0}z_{1}^{4},z_{1}^{5}, z_{4}z_{2}^{3},  z_{2}z_{3}^{3}, z_{3}z_{4}^{9}\}.$$ 
One considers  the following family generated by some elements of this set: $$X_t:(1-t) z_0^2 z_1^3+z_0^5+t z_1^5+z_4 z_2^3+z_2 z_3^3+z_3 z_4^9=0 \subset \mathbb{P}(82,82,125,95,35) \times \mathbb{C}$$ 
with central fiber  $$X_{0}:g=z_{0}^{5}+z_{0}^{2}z_{1}^3+z_{4}z_{2}^{3}+z_{2}z_{3}^{3}+z_{3}z_{4}^{9}=0.$$  From Lemma 2.1.1,  $X_t$ is quasismooth for all $t\not=0$ and $X_0$ is non-quasismooth.  As before, it is not difficult to show that  $X_0$ is either smooth or quasismooth in all points  except at the point $[0:1:0:0:0]$.  Actually  $X_0$ is klt at $[0:1:0:0:0]$. Let us consider  a general linear combination  of the monomials defining  $X_0$:  $$\tilde{X_0}=c_0z_{0}^{5}+c_1z_{0}^{2}z_{1}^{3}+c_2z_{4}z_{2}^{3}+c_3z_{2}z_{3}^{3}+c_4z_{3}z_{4}^{9}=0$$  which is  isomorphic to $X_0$ for ${c_i}'s$ general complex numbers different than zero. This  general hypersurface 
$\tilde{X_0}$ is klt at $[0:1:0:0:0].$  Indeed, in the affine chart $z_1\not =0$ we take $z_{1}=1$ and locally $\tilde{X_0}$ is the quotient of the  hypersurface $$S: c_0z_{0}^{5}+c_1z_{0}^{2}+c_2z_{4}z_{2}^{3}+c_3z_{2}z_{3}^{3}+c_4z_{3}z_{4}^{9}=0$$ in $\mathbb{C}^4$ by the group $\mathbb Z_{82}.$ 
         Since klt is a property  preserved by finite quotients, it suffices to show that such the general hypersurface $S\subset \mathbb{C}^4$ has canonical singularities.  
Clearly, $S$ is normal and   since the coefficients of the monomials in this example are taken to be general, the Newton non-degeneracy is satisfied since the base locus of any collection of the monomials in the equation $c_0z_{0}^{5}+c_1z_{0}^{2}+c_2z_{4}z_{2}^{2}+c_3z_{2}z_{3}^{2}+c_4z_{3}z_{4}^{19}=0$ is contained in the hypersurface $x_0\cdots x_n=0$. So we can apply Lemma 4.0.1 to determine whether the singularity is klt: the singularity in  $S$ is canonical if the point $(1,1,1,1)$ is in the interior of the Newton polyhedron $\Gamma_{+}(h)$ generated by support of the polynomial $h$ defining $\tilde{X_0}$, that is, generated by the set  
$$\operatorname{Supp}(h)=\bigl\{(5,0,0,0),(2,0,0,0),(0,3,0,1),(0,1,3,0),(0,0,1,9)\bigr\}.$$ 
         As before,  it is enough to show that there exists some point in the Newton polyhedron $\Gamma_{+}(h)$ with all its entries less than 1 . For this, we consider the following point
        $$P_{0}=\dfrac{44}{205}(3,0,0,0)+\dfrac{73}{410}(2,0,0,0)+\dfrac{19}{82}(0,3,0,1)+\dfrac{25}{82}(0,1,3,0)+\dfrac{7}{82}(0,0,1,9)$$
        which has all its entries equal to 1. Thus, the point $(1,1,1,1)$ is in the interior of  $\Gamma_{+}(h),$ so  $X_0$ is klt.
\medskip

\item In the context of Sasakian geometry, the links determined by the klt Fano  varieties described above are not smooth anymore. For instance  the hypersurface  $$f=z_{0}^{9}+z_{0}^{2}z_{1}^{7}+z_{4}z_{2}^{2}+z_{2}z_{3}^{2}+z_{3}z_{4}^{19}=0$$ on $\mathbb C^5-\{0\}$ has  
a one dimensional singular set   
$\Sigma=\{(0,z_1, 0,0,0)\}$ which intersects the unit sphere $S^9$ in the circle 
 $S^1$ so the link $L_f$ is non-smooth.  
Since the contact 1-form on the link $L_f=V_f\cap S^{9}$ is given by the contact 1-form on the weighted sphere $\eta_{\mathbf{w}}=\frac{\eta}{\sum_{i=0}^4 w_i\left|z_i\right|^2}$ restricted to $L_f,$ one notices that this contact 1-form degenerates on the  singular set $\Sigma\cap S^9$. 
If one considers the open manifold resulting from excluding this singular set we still obtain 
a Reeb vector field $\xi$ on the regular part $L_f^{reg}$ and a Riemannian metric on it. This natural way to produce singular links with Sasaki  metrics on the regular part can be interpreted as the horizon (or base) of a metric cone $C(L_f)=L_f\times \mathbb R^{+}$ with a function $r:C(L_f)\rightarrow {\mathbb R}^{+}$ that determines the radial coordinate and hence a Liouville vector field $\Psi=r\partial_r$ on the regular part  of $C(L_f).$  In accordance to the Yau-Tian-Donaldson conjecture for singular varieties  (\cite{LXZ} Theorem 1.6)   in order to extend  this argument to the realm of Sasaki-Einstein metrics on these singular links and on the corresponding weak Ricci-flat metrics on the associated metric cone one needs to show the exceptionality of the klt singularity (\cite{OS} Theorem 1.4), that is, determine whether the pair ($X, D$) is klt for every effective $\mathbb{Q}$-divisor $D$ that is $\mathbb{Q}$-linearly equivalent to $-K_X$,  or equivalently, show that the $\alpha$-invariant of $X$ is greater than 1 \cite{Bi}. Based on the explicit generators found in this article and the method of the weighted tangent cone developed by Totaro in \cite{To},  we do this with certain generality in \cite{CL2}.  
\end{itemize}

\begin{center}
{\bf Declarations}
\end{center}




\begin{itemize}

\item {\bf Funding:} The first author received  financial support from Pontificia Universidad Católica del Perú through project DGI PI0655. 

\item {\bf Data Availability:} Data sharing not applicable to this article as no datasets were generated or analyzed during the current study.

\item {\bf Conflict of interests:} The authors have no competing interests to declare that are relevant to the content of this article.

\end{itemize}

\medskip

\noindent{\bf Acknowledgements:}  
The first author  thanks  Ralph Gomez and Richard Gonzales for helpful conversations.
\medskip


\begin{thebibliography}{9}


\bibitem{BH} 
Berglund, P., Hübsch, T. \emph{A generalized construction of mirror manifolds,} Nucl. Phys. B, 393, 377-391, (1993).


\bibitem{Bi}
Birkar, C., \emph{Singularities of linear systems and boundedness of Fano varieties,} Ann. of Math. 193,  347-405, (2021).

\bibitem{Bo}
Boyer, C.P., \emph{ Contact Structures of Sasaki Type and Their Associated Moduli }, Complex Manifolds; 6:1-30 (2019)


\bibitem{BG1}
Boyer, C.P., Galicki, K. \emph{On Sasakian-Einstein Geometry,} Internat. J. Math. 11, no. 7, 873-909, (2000).







\bibitem{BGN2}
Boyer, C.P., Galicki, K., Nakamaye, M. \emph{Einstein Metrics on Rational Homology 7-Spheres,} Ann.Inst.Fourier 52,  no.5, 1569-1584, (2002).

\bibitem{BGN1}
Boyer, C.P., Galicki, K., Nakamaye, M. \emph{On the Geometry of Sasakian-Einstein 5-Manifolds,} Math. Ann. 325, 485-524, (2003).


\bibitem{BGK}
Boyer, C.P., Galicki, K., Koll\'ar, J. \emph{Einstein Metrics on Spheres,} Annals of Mathematics, 162 , 557-580, (2005).


\bibitem{BG2}
Boyer, C.P., Galicki, K. \emph{Einstein metrics on rational homology spheres,} J.  Differential Geom. no. 74, 353-362, (2006).


\bibitem{BBG}
Boyer, C.P., Galicki, K. \emph{Sasakian Geometry,} Oxford University Press, (2008).




\bibitem{BGS}
Boyer C.P,  Galicki, K.,  Simanca S. \emph{Canonical Sasakian Metrics,}  Commun. Math. Phys. 279, no. 3, 705-733. 
 MR2386725 (2008).











\bibitem{BMvK}

Boyer, Charles, P, Macarini, L.,  O., van Koert, \emph{Brieskorn manifolds, positive Sasakian geometry, and contact topology,} Forum Math. 28, no. 5, 943-965. MR 3543703,  (2016).

\bibitem{BN}
Boyer, C.P., Nakamaye, M. \emph{On Sasaki-Einstein manifolds in dimension five,} Geom. Dedicata 144, 141-156 (2010).


\bibitem{BvC}
Boyer, C.P., van Coevering, C. \emph{Relative K-stability and extremal Sasaki metrics,} Math. Res. Lett., Vol. 25, 1, 1-19, (2018).

\bibitem{CL}
Cuadros, J., Lope J. \emph{Sasaki-Einstein 7-manifolds and Orlik's conjecture,} Ann. Glob. Anal. Geom. 65, 3, (2024). https://doi.org/10.1007/s10455-023-09930-z

\bibitem{CL2}
Cuadros, J., Lope J. \emph{Exceptional Fano 3-folds from rational curves,}  https://doi.org/10.48550/arXiv.2602.13487,  (2026).

\bibitem{CGL} Cuadros, J., Gomez, R.R., Lope J. \emph{Berglund-H\"ubsch transpose and Sasaki-Einstein rational homology 7-spheres}, Commun. Math. Phys. 405, 199 (2024). https://doi.org/10.1007/s00220-024-05093-5


\bibitem{CSz} Collins T.C., Székelyhidi G. \emph{K-semistability for irregular Sasakian manifolds}, (2018),  J.  Differential Geom. no. 1, 81-109. MR 3798716, (2018).



\bibitem{CS}
Cheltsov, I.A., Shramov, K.A. \emph{Log canonical thresholds of smooth Fano threefolds} Russian Math. Surveys, 63: 5 859-959, (2008). 





\bibitem{DK}
Demailly, J-P., Kollár, J. \emph{Semi-continuity of complex singularity exponents and Kähler-Einstein metrics on Fano orbifolds,} Annales scientifiques de l'École Normale Supérieure, Serie 4, Volume 34  no. 4, pp. 525-556, (2001).





\bibitem{GHS}
Girbau, J., Haefliger, A., Sundaraman, D. \emph{On deformations of transversely holomorphic foliations,} J. Reine Angew. Math. 345, 122-147 (1983)



\bibitem{F}
Iano-Fletcher, A.R. \emph{Working with Weighted Complete Intersections,} Explicit Birational Geometry of 3-folds, London Math. Soc. Lecture Notes Ser., vol. 281, Cambridge Univ. Press, Cambridge, 101-173, (2000). 


\bibitem{IP}

Ishii, S.,  Prokhorov, Y.,  \emph{Hypersurface exceptional singularities,} Internat. J. Math. 12, 661-687, (2001).

\bibitem{HM}
Hertling, C., Mase M. \emph{The integral monodromy of isolated quasi-homogeneous singularities,} Algebra \& Number Theory, Vol. 16 no.4, 955–1024, (2022).


\bibitem{JK}
Johnson, J.M., Kollár, J.  \emph{Fano Hypersurfaces in Weighted Projective 4-Space,} Exper. Math, 10, no.1, 151-158, (2004).


\bibitem{KeM}  
Keel, S., Mori S. \emph{Quotients by groupoids,} Ann. of Math. (2)  145, no. 1, 193-213, (1997).

\bibitem{Kob}
Kobayashi, S., \emph{Topology of positively pinched Kaehler manifolds,} Tôhoku Math. J. (2) 15, 121–139, (1963).

\bibitem{KM}
Kollár, J., Mori, S. \emph{Birational geometry of algebraic varieties,} Cambridge Tracts in Mathematics, vol. 134, Cambridge University Press, Cambridge, (1998). 


\bibitem{Ko}
Kollár J., \emph{Einstein metrics on five-dimensional Seifert bundles,} J. Geom. Anal. 15, no. 3, 445-476. MR 2190241, (2005).


\bibitem{Ko1}
Kollár J., \emph{Is There a Topological Bogomolov-Miyaoka-Yau Inequality?,} Pure and Applied Mathematics Quarterly
Volume 4, Number 2
(Special Issue: In honor of Fedor Bogomolov, Part 1 of 2) 203-236, (2008).


\bibitem{Ko2}
Kollár, J.,  \emph{Links of complex analytic singularities,} Surveys in Differential Geometry, Volume 18, 157-193 (2013).

\bibitem{Ko3}
Kollár, J.,  \emph{Singularities of the minimal model program,} With the collaboration of Sándor Kovács. Cambridge, 5, (2013).





\bibitem{KS}
Kreuzer, M.,  Skarke, H., \emph{On the classification of quasihomogeneous functions,} Comm. Math. Phys. 150  no. 1, 137-147, (1992).


\bibitem{Li}
Li, C.  \emph{Notes on weighted Kähler-Ricci solitons and application to Ricci-flat Kähler cone metrics,}  arXiv:2107.02088 (2021).

\bibitem{LX}
Li, C.,  Xu, C.,  \emph{Special test configuration and K-stability of Fano varieties,} Ann. of Math. (2) 180, no. 1, 197-232, (2014).

\bibitem{LXZ}
Liu, Y.,  Xu, C., Zhuang, Z., \emph{Finite generation for valuations computing stability thresholds and applications to $K$-stability,} Ann. of Math. 196, 507-566, (2022).

\bibitem{LST} 
Liu, Y., Sano, T., Tasin, L., \emph{Infinitely many families of Sasaki-Einstein metrics on spheres,} J. Differ. Geom. 130 (1) 1-26 (2025).

\bibitem{Mi}
Milnor, J., \emph{Singular Points of Complex Hypersurfaces,} Ann. of Math. Stud., Vol.61, Princeton
University Press, Princeton, NJ, (1968).

\bibitem{MO}
Milnor, J., Orlik, P., \emph{Isolated Singularities defined by Weighted Homogeneous Polynomials,} Topology 9,  385-393, (1970).

\bibitem{Na}
Nadel, A. M.,  \emph{Multiplier ideal sheaves and Kähler-Einstein metrics of positive scalar curvature,} Annals of Math. 132, 549-596, (1990).

\bibitem{No}
 Nozawa, H., \emph{Deformation of Sasakian metrics,} Trans. Amer. Math. Soc. 366, no. 5, 2737-2771. MR 3165654 (2014).


\bibitem{Od1}
Odaka, Y., \emph{The GIT stability of polarized varieties via discrepancy,} Ann. of Math. 177, no. 2, 645-661, (2013), 

\bibitem{Od}
Odaka, Y.,  \emph{Compact moduli of Calabi-Yau cones and Sasaki-Einstein spaces,} https://doi.org/10.48550/arXiv.2405.07939, (2024).

\bibitem{OS}
Odaka, Y., Sano, Y., \emph{Alpha invariant and K-stability of $\mathbb{Q}$-Fano varieties,} 
Adv. Math. 229 , no. 5, 2018-2834, (2012).


\bibitem{Or}
Orlik, P., \emph{On the Homology of Weighted Homogeneous Manifolds,} Proceedings of the Second Conference on Compact Transformation Groups (Univ. Mass, Amherst, Mass 1971) Part I (Berlin), Spring,  pp 260-269, (1972).


\bibitem{Sp}
Sparks, J., \emph{Sasaki-Einstein manifolds}, Surveys in differential geometry, pp. 265-324, Vol. 16,  Number 1, (2011).


\bibitem{Ti}
Tian G. \emph{On Kähler-Einstein metrics on certain Kähler manifolds with $c_1(M)>0$,} Invent. Math. 89, 225-246, (1987) .

\bibitem{To}
Totaro, B., \emph{Klt varieties with conjecturally minimal volume,} Int. Math. Res. Not, 462-491, (2024).

\bibitem{vC}
van Coevering, C. \emph{Stability of Sasaki-extremal metrics under complex deformations,} Int. Math. Res. Not. IMRN, no. 24, 5527-5570. 
MR 3144173, (2013).




\end{thebibliography}
\end{document}